\numberwithin{equation}{section}
\newtheorem{thm}{Theorem}[section]
\newtheorem{prop}[thm]{Proposition}
\begin{document}
\title{Determining the first order perturbation of a bi-harmonic operator on bounded and unbounded domains from partial data}
\author{Yang Yang}
\date{}
\maketitle

\begin{abstract}
In this paper we study inverse boundary value problems with partial data for the bi-harmonic operator with first order perturbation. We consider two types of subsets of $\mathbb{R}^{n}(n\geq 3)$, one is an infinite slab, the other is a bounded domain. In the case of a slab, we show that, from Dirichlet and Neumann data given either on the different boundary hyperplanes of the slab or on the same boundary hyperplane, one can uniquely determine the magnetic potential and the electric potential.

In the case of a bounded domain, we show the unique determination of the magnetic potential and the electric potential
from partial Dirichlet and Neumann data under two different assumptions. The first assumption is that the magnetic and electric potentials are known in a neighborhood of the boundary, in this situation we obtain the uniqueness result when the Dirichlet and Neumann data are only given on two arbitrary open subsets of the boundary. The second assumption is that the Dirichlet and Neumann data are known on the same part of the boundary whose complement is a part of a hyperplane, we also establish the unique determination result in this local data case.
\end{abstract}

\section{Introduction and statement of results}

A bi-harmonic operator with first order perturbation is a differential operator of the form
$$\mathcal{L}_{A,q}(x,D):=\Delta^{2}+A(x)\cdot D+q(x)$$
with $D=\frac{1}{i}\nabla$. Here $A$ is a complex-valued vector field called the magnetic potential, $q$ is a complex-valued function called the electric potential. This type of operators arise in physics when considering the equilibrium configuration of an elastic plate hinged along the boundary. It is also widely used in other physical models, see \cite{GGS}. In this paper we study the identifiability of the first order perturbation of a bi-harmonic operator from partial boundary measurements in two types of open subsets of $\mathbb{R}^{n}$, the first type is an infinite slab, and the second type is a bounded domain with $C^{\infty}$ boundary.

First we consider an infinite slab $\Sigma$. The geometry of an infinite slab arises in many applications, for instance, in the study of wave propagation in marine acoustics. It is also a simple geometric setting in medical imaging. By choosing appropriate coordinates, we may assume that
$$\Sigma:=\{x=(x',x_{n})\in\mathbb{R}^{n}:x'=(x_{1},\dots,x_{n-1})\in\mathbb{R}^{n-1}, 0<x_{n}<L\},\quad L>0.$$
Its boundary consists of two parallel hyperplanes
$$\Gamma_{1}:=\{x\in\mathbb{R}^{n}:x_{n}=L\} \quad\quad \Gamma_{2}:=\{x\in\mathbb{R}^{n}:x_{n}=0\}.$$
Given $(f_{1},f_{2})\in H^{\frac{7}{2}}(\Gamma_{1})\times H^{\frac{3}{2}}(\Gamma_{1})$ with $f_{1}, f_{2}$ compactly supported on $\Gamma_{1}$, we are interested in the following Dirichlet problem
\begin{equation}\label{Dirichlet1}
\left\{
\begin{array}{rll}\vspace{1ex}
\mathcal{L}_{A,q}u= & 0 &\quad\textrm{ in } \Sigma \\ \vspace{1ex}
u=f_{1} \quad \Delta u= & f_{2} & \quad\textrm{ on } \Gamma_{1} \\ \vspace{1ex}
u=0 \quad \Delta u= & 0 & \quad\textrm{ on } \Gamma_{2}. \\
\end{array}
\right.
\end{equation}
In Appendix A we show that problem \eqref{Dirichlet1} has a unique solution in $H^{4}(\Sigma)$, where $H^{4}(\Sigma)$ is the standard Sobolev space on $\Sigma$. We define the Dirichlet-to-Neumann map for the above boundary value problem by
$$
\begin{array}{rcl}\vspace{1ex}
\Lambda_{A,q}: \quad (H^{\frac{7}{2}}(\Gamma_{1})\cap\mathcal{E}'(\Gamma_{1}))\times (H^{\frac{3}{2}}(\Gamma_{1})\cap\mathcal{E}'(\Gamma_{1})) & \rightarrow & H^{\frac{5}{2}}_{loc}(\partial\Sigma)\times H^{\frac{1}{2}}_{loc}(\partial\Sigma)\\ \vspace{1ex}
(f_{1},f_{2}) & \mapsto & (\partial_{\nu}u|_{\partial\Sigma},\partial_{\nu}(\Delta u)|_{\partial\Sigma}),
\end{array}
$$
where $u$ is the solution of \eqref{Dirichlet1}, $\mathcal{E}'(\Gamma_{1})$ is the set of compactly supported distributions on $\Gamma_{1}$, $\nu$ is the unit outer normal vector field to $\partial\Sigma=\Gamma_{1}\cup\Gamma_{2}$. The inverse problem we will study is as follows. Let $\gamma_{1}\subset\Gamma_{1}$,$\gamma_{2}\subset\partial\Sigma$ be non-empty open subsets of the boundary, assuming that
$$\Lambda_{A^{(1)},q^{(1)}}(f_{1},f_{2})|_{\gamma_{2}\times\gamma_{2}}=\Lambda_{A^{(2)},q^{(2)}}(f_{1},f_{2})|_{\gamma_{2}\times\gamma_{2}}$$
for all $(f_{1},f_{2})\in (H^{\frac{7}{2}}(\Gamma_{1})\cap\mathcal{E}'(\Gamma_{1}))\times (H^{\frac{3}{2}}(\Gamma_{1})\cap\mathcal{E}'(\Gamma_{1}))$ with $supp(f_{1})\subset\gamma_{1}$, $supp(f_{2})\subset\gamma_{1}$, can we conclude that $A^{(1)}=A^{(2)}$ and $q^{(1)}=q^{(2)}$ in $\Sigma$?

We will show this is valid for some open subsets $\gamma_{1},\gamma_{2}$ assuming that $A^{(j)},q^{(j)},j=1,2$ are compactly supported in $\bar{\Sigma}$. Our first result considers the case when the data and the measurements are on different boundary hyperplanes.

\begin{thm}
Let $\Sigma\subset\mathbb{R}^{n}(n\geq 3)$ be an infinite slab with boundary hyperplanes $\Gamma_{1}$ and $\Gamma_{2}$. Let $A^{(j)}\in W^{1,\infty}({\Sigma};\mathbb{C}^{n})\cap\mathcal{E}'(\bar{\Sigma};\mathbb{C}^{n})$, $q^{(j)}\in L^{\infty}(\Sigma;\mathbb{C})\cap\mathcal{E}'(\bar{\Sigma};\mathbb{C}), j=1,2.$ Denote by $B\subset\mathbb{R}^{n}$ an open ball containing the supports of $A^{(j)},q^{(j)},j=1,2$. Let $\gamma_{j}\subset\Gamma_{j}$ be open sets such that
$\Gamma_{j}\cap\bar{B}\subset\gamma_{j},\;j=1,2.$ If
$$\Lambda_{A^{(1)},q^{(1)}}(f_{1},f_{2})|_{\gamma_{2}\times\gamma_{2}}=\Lambda_{A^{(2)},q^{(2)}}(f_{1},f_{2})|_{\gamma_{2}\times\gamma_{2}}$$
for all $(f_{1},f_{2})\in (H^{\frac{7}{2}}(\Gamma_{1})\cap\mathcal{E}'(\Gamma_{1}))\times (H^{\frac{3}{2}}(\Gamma_{1})\cap\mathcal{E}'(\Gamma_{1}))$ with $supp(f_{1})\subset\gamma_{1}$ and $supp(f_{2})\subset\gamma_{1}$,
then $A^{(1)}=A^{(2)}$ and $q^{(1)}=q^{(2)}$.
\end{thm}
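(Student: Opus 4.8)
\medskip
\noindent\textbf{Proof proposal.} The plan is to reduce the problem, via the fourth-order Green formula, to an integral identity for complex geometric optics (CGO) solutions, and then to read off the Fourier transforms of $A^{(1)}-A^{(2)}$ and $q^{(1)}-q^{(2)}$ in a semiclassical limit.

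\emph{Step 1 (integral identity).} Fix Dirichlet data $(f_1,f_2)$ compactly supported in $\gamma_1$, let $u_1\in H^4(\Sigma)$ solve $\mathcal L_{A^{(1)},q^{(1)}}u_1=0$ with this data, and let $v\in H^4(\Sigma)$ solve $\mathcal L_{A^{(2)},q^{(2)}}v=0$ with the same data. Then $w:=u_1-v$ satisfies $\mathcal L_{A^{(2)},q^{(2)}}w=-\big[(A^{(1)}-A^{(2)})\cdot Du_1+(q^{(1)}-q^{(2)})u_1\big]$, has $w=\Delta w=0$ on all of $\partial\Sigma$, and, by the hypothesis on the Dirichlet--Neumann maps, $\partial_\nu w=\partial_\nu\Delta w=0$ on $\gamma_2$. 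Pairing $w$ with a solution $u_2$ of the formal adjoint equation $\mathcal L_{A^{(2)},q^{(2)}}^{*}u_2=0$ — note that $\mathcal L^{*}$ is again of the form $\Delta^2+\widetilde A\cdot D+\widetilde q$ with $\widetilde A\in W^{1,\infty}$ and $\widetilde q\in L^\infty$, which is precisely what the $W^{1,\infty}$ hypothesis on $A^{(2)}$ guarantees — the fourth-order Green formula yields
\[
\int_{\Sigma}\big[(A^{(1)}-A^{(2)})\cdot Du_1+(q^{(1)}-q^{(2)})u_1\big]\,\overline{u_2}\,dx=\int_{\partial\Sigma}\!\big[(\partial_\nu\Delta w)\,\overline{u_2}+(\partial_\nu w)\,\Delta\overline{u_2}\big]\,dS ,
\]
the integrand on the right already vanishing on $\gamma_2$. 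Since $A^{(j)},q^{(j)}$ are supported in $B$, the left side is an integral over the bounded set $\Sigma\cap B$.

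\emph{Step 2 (killing the boundary term — the main obstacle).} Because the Dirichlet data on $\Gamma_2$ in \eqref{Dirichlet1} is zero, $w=\Delta w=0$ on all of $\Gamma_2$, hence $\partial_{x_n}^2w=0$ there, so the odd reflection $x_n\mapsto -x_n$ gives an $H^4$ extension $\widetilde w$ on the doubled slab solving $\widetilde{\mathcal L}\,\widetilde w=\widetilde F$ with reflected coefficients ($A_n$ changing sign, $A'$ and $q$ even) and with $\widetilde F$ supported in the reflected ball $\widetilde B$. On the interior hypersurface $\gamma_2\subset\{x_n=0\}$ the Cauchy data of $\widetilde w$ of orders $0$ through $3$ (namely $w,\Delta w,\partial_\nu w,\partial_\nu\Delta w$) all vanish, so a unique continuation argument for $\Delta^2$ with bounded first-order perturbation — available since the coefficients are supported in $\widetilde B$, away from part of $\{x_n=0\}$ and from $\partial\widetilde\Sigma$ — propagates $\widetilde w\equiv 0$ through the coefficient-free region, in particular to a neighbourhood of $\partial\widetilde\Sigma$. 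Thus $w$ and all its derivatives vanish on $\partial\Sigma$, and the right-hand side of the Step 1 identity is zero. Finally, a Runge-type approximation (again via unique continuation, using $\Gamma_j\cap\bar B\subset\gamma_j$ so that solutions whose Dirichlet data is supported in $\gamma_1$ are dense near $\Sigma\cap B$ among all solutions of $\mathcal L_{A^{(1)},q^{(1)}}(\cdot)=0$) lets one replace $u_1$ by an arbitrary CGO solution. The outcome is
\[
\int_{\Sigma\cap B}\big[(A^{(1)}-A^{(2)})\cdot Du_1+(q^{(1)}-q^{(2)})u_1\big]\,\overline{u_2}\,dx=0
\]
for all CGO solutions $u_1$ of $\mathcal L_{A^{(1)},q^{(1)}}u_1=0$ and $u_2$ of $\mathcal L_{A^{(2)},q^{(2)}}^{*}u_2=0$. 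This step — combining the reflection across $\Gamma_2$, Cauchy uniqueness along the unbounded interior hyperplane, and the density argument — is where the slab geometry makes the analysis delicate, and is the part I expect to be hardest to carry out carefully.

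\emph{Step 3 (CGO solutions and recovery of $A$).} Using a Carleman estimate for $\Delta^2$ with linear weight $\varphi=x\cdot\omega$ (the perturbation $A\cdot D+q$ being lower order and absorbed by the estimate), construct for small $h>0$ solutions
\[
u_1=e^{\zeta^{(1)}\cdot x}\big(a_1+r_1\big),\qquad u_2=e^{\zeta^{(2)}\cdot x}\big(a_2+r_2\big),
\]
with $\zeta^{(j)}=\zeta^{(j)}(h)\in\mathbb C^n$, $\zeta^{(j)}\cdot\zeta^{(j)}=0$, $|\zeta^{(j)}|\asymp h^{-1}$, where the amplitudes $a_j$ solve a first transport equation (hence may be taken $h$-independent, in fact $a_j\equiv 1$) and $\|r_j\|\to 0$ in the relevant semiclassical norm. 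Given $\xi\in\mathbb R^n$ small and a null vector $\mu\in\mathbb C^n$, pick $\zeta^{(1)},\zeta^{(2)}$ with $\zeta^{(1)}+\overline{\zeta^{(2)}}=-i\xi$ and $h\zeta^{(1)}\to\mu$. Since $Du_1=\tfrac1i\zeta^{(1)}u_1+O(1)$, inserting these solutions in the identity of Step 2, multiplying by $h$, and letting $h\to0$ kills the $q$-term and the remainders and gives
\[
\int_{\mathbb R^n}e^{-ix\cdot\xi}\,\big(A^{(1)}-A^{(2)}\big)(x)\cdot\mu\,dx=0 .
\]
As $\mu$ runs over null vectors, which span $\mathbb C^n$ for $n\ge3$, and since $A\cdot D$ is linear in $A$ with no gauge freedom, this forces $\widehat{A^{(1)}-A^{(2)}}(\xi)=0$ for $\xi$ near $0$; as $A^{(1)}-A^{(2)}$ has compact support its Fourier transform is entire, so $A^{(1)}=A^{(2)}$.

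\emph{Step 4 (recovery of $q$).} With $A^{(1)}=A^{(2)}$ the identity of Step 2 collapses to $\int_{\Sigma\cap B}(q^{(1)}-q^{(2)})\,u_1\overline{u_2}\,dx=0$. Using the same CGO solutions (amplitudes tending to $1$) and letting $h\to0$ gives $\widehat{q^{(1)}-q^{(2)}}(\xi)=0$ near $\xi=0$, and the same analyticity argument yields $q^{(1)}=q^{(2)}$. The CGO construction for the fourth-order operator and the two Fourier limits are essentially routine; the crux of the proof is Step 2, the removal of the boundary contributions under partial data on the two unbounded hyperplanes.
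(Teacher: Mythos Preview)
Your overall outline is right, but there are two genuine gaps, one in Step~2 and a more serious one in Step~3.

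\medskip
\textbf{Step 2 does not kill the boundary term on $\Gamma_1$.} Unique continuation through the coefficient-free region $\Sigma\setminus\bar B$ (whether or not you reflect $w$ across $\Gamma_2$) gives $w=0$ there, hence all Cauchy data of $w$ vanish on $\Gamma_1\setminus\bar B$ and on $l_3=\partial B\cap\Sigma$. But on $l_1=\Gamma_1\cap\bar B$ you only know $w=\Delta w=0$ (equal Dirichlet data); there is no reason for $\partial_\nu w$ or $\partial_\nu\Delta w$ to vanish on $l_1$, so your boundary integral
\[
\int_{l_1}\big[(\partial_\nu\Delta w)\,\overline{u_2}+(\partial_\nu w)\,\Delta\overline{u_2}\big]\,dS
\]
survives. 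The paper handles this not by squeezing more out of $w$, but by restricting the test solutions: it requires $v|_{l_1}=\Delta v|_{l_1}=0$ and builds such CGO solutions by reflecting across the hyperplane $x_n=L$ (while $u_1$ is reflected across $x_n=0$ to satisfy $u_1|_{l_2}=\Delta u_1|_{l_2}=0$). Without this extra boundary condition on $u_2$ your identity does not close.

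\medskip
\textbf{Step 3 only yields $d(A^{(1)}-A^{(2)})=0$, not $A^{(1)}=A^{(2)}$.} With $\zeta^{(1)}+\overline{\zeta^{(2)}}=-i\xi$ and $\zeta^{(j)}\cdot\zeta^{(j)}=0$, the limit $h\zeta^{(1)}\to\mu$ forces $\mu\cdot\xi=0$. So for each fixed $\xi$ the admissible $\mu$'s span only $\mathbb{C}\otimes\xi^\perp$, and from $\mu\cdot\widehat{(A^{(1)}-A^{(2)})}(\xi)=0$ you can conclude at most that $\widehat{(A^{(1)}-A^{(2)})}(\xi)$ is parallel to $\xi$, i.e.\ $d(A^{(1)}-A^{(2)})=0$; the statement ``null vectors span $\mathbb{C}^n$'' is irrelevant because $\mu$ and $\xi$ cannot be varied independently. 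This is exactly the gauge obstruction for the magnetic Schr\"odinger operator. The biharmonic case overcomes it only because the amplitude transport equation is \emph{second} order, $(\zeta^{(0)}\cdot\nabla)^2 a=0$, so one may choose $a_1$ with $(\zeta^{(0)}\cdot\nabla)a_1=1$ (not just $a_1\equiv1$). Writing $A^{(1)}-A^{(2)}=\nabla\Phi$, the paper first shows $\Phi=0$ on $\partial(\Sigma\cap B)$ via a holomorphic-extension argument with varying amplitudes, and then, using the choice $(\zeta^{(0)}\cdot\nabla)a_1=1$, integrates by parts in the identity to obtain $\widehat{\Phi}(\xi)=0$. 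Restricting to constant amplitudes, as you do, throws away precisely the freedom that makes the biharmonic result stronger than the Schr\"odinger one.
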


We would like to remark that when the supports of $A^{(j)},q^{(j)}$ are strictly contained in the interior of the slab, then $\gamma_{1}$ and $\gamma_{2}$ in the above theorem can be chosen to be arbitrarily small.

Our next result considers the case when the data and the measurements are on the same boundary hyperplane.

\begin{thm}
Let $\Sigma\subset\mathbb{R}^{n}(n\geq 3)$ be an infinite slab between two parallel hyperplanes $\Gamma_{1}$ and $\Gamma_{2}$. Let $A^{(j)}\in W^{1,\infty}({\Sigma};\mathbb{C}^{n})\cap\mathcal{E}'(\bar{\Sigma};\mathbb{C}^{n})$, $q^{(j)}\in L^{\infty}(\Sigma;\mathbb{C})\cap\mathcal{E}'(\bar{\Sigma};\mathbb{C}), j=1,2.$ Denote by $B\subset\mathbb{R}^{n}$ an open ball containing the supports of $A^{(j)},q^{(j)},j=1,2$. Let $\gamma_{1},\gamma'_{1}\subset\Gamma_{1}$ be open sets such that $\Gamma_{1}\cap\bar{B}\subset\gamma_{1}$ amd $\Gamma_{1}\cap\bar{B}\subset\gamma'_{1}.$ If
$$\Lambda_{A^{(1)},q^{(1)}}(f_{1},f_{2})|_{\gamma'_{1}\times\gamma'_{1}}=\Lambda_{A^{(2)},q^{(2)}}(f_{1},f_{2})|_{\gamma'_{1}\times\gamma'_{1}}$$
for all $(f_{1},f_{2})\in (H^{\frac{7}{2}}(\Gamma_{1})\cap\mathcal{E}'(\Gamma_{1}))\times (H^{\frac{3}{2}}(\Gamma_{1})\cap\mathcal{E}'(\Gamma_{1}))$ with $supp(f_{1})\subset\gamma_{1}$ and $supp(f_{2})\subset\gamma_{1}$,
then $A^{(1)}=A^{(2)}$ and $q^{(1)}=q^{(2)}$.
\end{thm}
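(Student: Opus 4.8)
The plan is to reduce Theorem 1.2 to the two-hyperplane situation already settled in Theorem 1.1 by reflecting the slab across its lower boundary hyperplane $\Gamma_{2}=\{x_{n}=0\}$; the homogeneous condition $u=\Delta u=0$ on $\Gamma_{2}$ is exactly what makes this reflection admissible. Write $R(x',x_{n}):=(x',-x_{n})$, let $\Sigma^{\sharp}:=\{x\in\mathbb{R}^{n}:-L<x_{n}<L\}$ be the doubled slab with bounding hyperplanes $P_{1}:=\{x_{n}=L\}$ and $P_{2}:=\{x_{n}=-L\}$, and for coefficients $A=(A',A_{n})$, $q$ supported in $\overline{B}$ define reflected coefficients on $\Sigma^{\sharp}$ by leaving them unchanged for $x_{n}>0$ and setting $A^{\sharp}:=(A'\circ R,-A_{n}\circ R)$, $q^{\sharp}:=q\circ R$ for $x_{n}<0$ (the sign flip on the normal component is forced by the chain rule). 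The first ingredient is the reflection lemma: if $u\in H^{4}(\Sigma)$ solves $\mathcal{L}_{A,q}u=0$ with $u=\Delta u=0$ on $\Gamma_{2}$, then the odd extension $u^{\sharp}$ (equal to $u$ for $x_{n}\geq 0$ and to $-u\circ R$ for $x_{n}<0$) lies in $H^{4}(\Sigma^{\sharp})$ and solves $\mathcal{L}_{A^{\sharp},q^{\sharp}}u^{\sharp}=0$ in $\Sigma^{\sharp}$. The matching across $\{x_{n}=0\}$ of derivatives of orders $0$ through $3$ (which is what $H^{4}$ regularity of a piecewise-$H^{4}$ function demands) is automatic for $\partial_{n}u^{\sharp}$ and $\partial_{n}^{3}u^{\sharp}$ by oddness, holds for $u^{\sharp}$ because $u|_{\Gamma_{2}}=0$, and holds for $\partial_{n}^{2}u^{\sharp}$ because $u|_{\Gamma_{2}}=0$ kills the tangential second derivatives on $\Gamma_{2}$ so that $\Delta u|_{\Gamma_{2}}=0$ forces $\partial_{n}^{2}u|_{\Gamma_{2}}=0$; the equation on $\{x_{n}<0\}$ follows from the identity $\mathcal{L}_{A^{\sharp},q^{\sharp}}u^{\sharp}(x)=-(\mathcal{L}_{A,q}u)(Rx)$. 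One also checks $A^{\sharp}\in W^{1,\infty}$: the even reflection preserves Lipschitz continuity of the tangential components, and the odd reflection of $A_{n}$ is Lipschitz because $A_{n}$ vanishes on $\Gamma_{2}$ (a compactly supported $W^{1,\infty}$ field with support in $\overline{\Sigma}$ has zero trace on $\partial\Sigma$).

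Next I would translate the data. Given $(f_{1},f_{2})$ supported in $\gamma_{1}\subset\Gamma_{1}$, the solution of \eqref{Dirichlet1} reflects to a solution $u^{\sharp}$ on $\Sigma^{\sharp}$ with Dirichlet data $(f_{1},f_{2})$ on $P_{1}$ (supported in $\gamma_{1}$) and $(-f_{1},-f_{2})$ on $P_{2}$ (supported in $R(\gamma_{1})$), whose Neumann data on $P_{1}$ is that of $u$ on $\Gamma_{1}$ and on $P_{2}$ is its $R$-image up to sign. Hence the equality of the partial Dirichlet-to-Neumann maps on $\gamma_{1}'\times\gamma_{1}'$ means exactly that, for every admissible $(f_{1},f_{2})$, the two reflected solutions $u_{1}^{\sharp}$, $u_{2}^{\sharp}$ share the same Dirichlet data on $P_{1}\cup P_{2}$ and the same Neumann data on $\gamma_{1}'\cup R(\gamma_{1}')$; equivalently $w:=u_{1}^{\sharp}-u_{2}^{\sharp}$ has vanishing Dirichlet data on all of $\partial\Sigma^{\sharp}$ and vanishing Neumann data on $\gamma_{1}'\cup R(\gamma_{1}')$. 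Because $\Gamma_{1}\cap\overline{B}\subset\gamma_{1}\cap\gamma_{1}'$, the reflected perturbations are supported in $\overline{B}\cup R(\overline{B})$, whose shadows on the two bounding hyperplanes lie inside $\gamma_{1}\cup R(\gamma_{1})$ and $\gamma_{1}'\cup R(\gamma_{1}')$ respectively; this is precisely the configuration handled in the proof of Theorem 1.1 (data sets and measurement sets containing the boundary shadow of the support of the perturbation), now on $\Sigma^{\sharp}$, with data and measurements distributed over subsets of both bounding hyperplanes rather than one each, which only makes the boundary terms easier. Running that proof --- deriving, via Green's formula for $\Delta^{2}$ and the vanishing of $w$'s Cauchy data off the measurement set, the integral identity
\[
\int_{\Sigma^{\sharp}}\bigl[(A^{(1)\sharp}-A^{(2)\sharp})\cdot Du_{1}^{\sharp}\bigr]\,\overline{u_{2}^{\sharp}}\,dx+\int_{\Sigma^{\sharp}}(q^{(1)\sharp}-q^{(2)\sharp})\,u_{1}^{\sharp}\,\overline{u_{2}^{\sharp}}\,dx=0
\]
for complex geometric optics solutions $u_{1}^{\sharp}$ of $\mathcal{L}_{A^{(1)\sharp},q^{(1)\sharp}}$ and $u_{2}^{\sharp}$ of the transposed operator of $\mathcal{L}_{A^{(2)\sharp},q^{(2)\sharp}}$, built by a reflection-and-cutoff device so that their Dirichlet traces are supported in $\gamma_{1}$ resp. $\gamma_{1}'$ (and their images) and all boundary contributions drop out, then extracting Fourier transforms (the leading order in the large parameter recovers $A^{(1)\sharp}-A^{(2)\sharp}$ modulo a gradient, and a further order together with the compact supports removes the gradient ambiguity and fixes $q^{(1)\sharp}-q^{(2)\sharp}$) --- gives $A^{(1)\sharp}=A^{(2)\sharp}$ and $q^{(1)\sharp}=q^{(2)\sharp}$ on $\Sigma^{\sharp}$. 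Restricting to $\{x_{n}>0\}$ yields $A^{(1)}=A^{(2)}$ and $q^{(1)}=q^{(2)}$ in $\Sigma$.

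The step I expect to be the main obstacle is the construction of the complex geometric optics solutions in the slab, inherited from the proof of Theorem 1.1. Because $\Delta^{2}$ has a double characteristic, the amplitudes in the ansatz $e^{x\cdot\zeta/h}(a_{0}+h a_{1}+\cdots)$ solve a coupled transport-type system rather than a single transport equation, and an additional iteration (together with a Carleman estimate for $\Delta^{2}$ with a linear weight and a gain of a derivative) is needed to absorb the first-order term $A\cdot D$. On top of this, and this is the genuinely slab-specific part, one must superimpose reflections across the bounding hyperplanes with cutoffs near $\gamma_{1}$, $\gamma_{1}'$ so that the Dirichlet traces of the solutions are localized as required, while controlling the resulting source terms by the fact that they are supported away from $\overline{B}$, where the perturbation difference lives --- so that the clean integral identity above is actually obtained. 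By comparison, the reflection lemma, the verification of the regularity of $A^{\sharp}$, and the bookkeeping for the doubled geometry are routine.
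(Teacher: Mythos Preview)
Your reflection idea is the right one, and it is essentially what the paper exploits, but there are two genuine problems with the execution, and the route the paper actually takes is different from your ``double the slab and invoke Theorem~1.1'' plan.

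First, a factual error. You assert that $A_{n}|_{\Gamma_{2}}=0$ because ``a compactly supported $W^{1,\infty}$ field with support in $\overline{\Sigma}$ has zero trace on $\partial\Sigma$.'' That is false: support contained in $\overline{\Sigma}$ says nothing about vanishing on $\partial\Sigma$. Without $A_{n}|_{\Gamma_{2}}=0$ the odd extension $A^{\sharp}$ is only $L^{\infty}$, not $W^{1,\infty}$, and the CGO machinery (Proposition~\ref{existence}) does not apply. The paper does not assume this; it reduces to the case $A_{n}|_{\Gamma_{2}}=0$ by subtracting $\nabla\Psi$ for a compactly supported $\Psi\in C^{1,1}(\overline{\Sigma})$ with $\Psi|_{\partial\Sigma}=0$ and $\partial_{\nu}\Psi=-A\cdot\nu$ on $\partial\Sigma$, and checks afterwards that the auxiliary gradients cancel.

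Second, and more importantly, doubling the slab does \emph{not} reduce to Theorem~1.1. The hypothesis only controls the doubled Dirichlet-to-Neumann map on \emph{odd} data (solutions $u^{\sharp}$ with $u^{\sharp}(x)=-u^{\sharp}(Rx)$), not on arbitrary data supported on $P_{1}$. You therefore cannot apply Theorem~1.1 as a black box, and when you try to rerun its proof you hit a concrete obstruction in the CGO step. In Theorem~1.1 the two test solutions are reflected across \emph{different} hyperplanes, and one takes $\mu^{(2)}_{n}>0$ so that the three cross exponentials in \eqref{exp1} decay pointwise. In the present situation both $u_{1}$ and $v$ must vanish on the \emph{same} hyperplane $\Gamma_{2}$, so both are reflected across $x_{n}=0$; the cross exponentials (see \eqref{exp2}) then have modulus $e^{\pm 2\mu^{(2)}_{n}x_{n}/h}$, and one of them blows up unless $\mu^{(2)}_{n}=0$. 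With $\mu^{(2)}_{n}=0$ those cross terms become purely oscillatory, $e^{ix\cdot\xi_{\pm}}$ with $|\xi_{\pm}|\to\infty$, and must be killed by the Riemann--Lebesgue lemma after imposing $\mu^{(1)}_{n}\neq 0$. This is a genuinely different limiting argument from Theorem~1.1, and it changes the admissible set of $(\xi,\mu^{(1)},\mu^{(2)})$: condition \eqref{perp} replaces $\mu^{(2)}_{n}>0$. As a consequence the curl-vanishing step (Proposition~\ref{curlvanish2}) requires a new choice of vectors and a separate treatment of $n=3$ versus $n\geq 4$, rather than the single argument in Proposition~\ref{curlvanish}. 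Your proposal does not anticipate any of this; the phrase ``which only makes the boundary terms easier'' is where the difficulty is hidden.

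For comparison, the paper does not double the slab at all. It derives the integral identity \eqref{identity3} directly on $\Sigma\cap B$ with $u_{1}\in\mathcal{W}_{l_{2}}(\Sigma\cap B)$ and $v\in\mathcal{V}_{l_{2}}(\Sigma\cap B)$ (both vanishing on $l_{2}$, not on $l_{1}$ as in Theorem~1.1), constructs $u_{1}$ and $v$ by reflecting CGO solutions across $x_{n}=0$ (equations \eqref{CGO1} and \eqref{CGO3}), passes to the limit using Riemann--Lebesgue as above, and obtains identity \eqref{mu3} on the doubled bounded domain $(\Sigma\cap B)\cup(\Sigma\cap B)^{\ast}_{0}$. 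The rest follows the pattern of Theorem~1.1 but with the modified constraint \eqref{perp}.
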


Proofs of Theorem 1.1 and Theorem 1.2 are based on the construction of a special class of complex geometric optics (CGO) solutions which vanish on appropriate boundary hyperplanes, using a reflection argument. The idea of constructing such solutions for the Schr\"{o}dinger operator goes back to \cite{SU}. Constructing complex geometric optics solutions using a reflection argument was initiated in \cite{I}.

Inverse problems of identifying an embedded object in a slab have been studied by many authors in \cite{Ik,KLU,LU,SW}. In \cite{LU} the authors considered the Schr\"{o}dinger operator $\Delta+q$ in a slab and showed that the electric potential $q$ can be uniquely determined from partial boundary measurements. In \cite{KLU} the authors considered the magnetic Schr\"{o}dinger operator $\Delta+A(x)\cdot D+q$ and showed that the magnetic field $dA$ and the electric potential $q$ can be uniquely determined from partial boundary measurements. Here $dA$ is the exterior differentiation of the magnetic potential vector field $A$, and notice that determining $dA$ is equivalent to determining the equivalence class $\{\tilde{A}:\tilde{A}=A+\nabla\Phi \textrm{ for some }\Phi\in C^{1,1}(\overline{\Sigma})\}$. It was also pointed out in \cite{KLU} that, by only looking at the Dirichlet-to-Neumann map, such a gauge transformation obstruction always exists, so the best one can hope for the magnetic Schr\"{o}dinger operator is to determine $dA$. However, for the perturbed bi-harmonic operator, our results indicate that this type of obstruction can be overcome and one therefore determines not only $dA$, but also $A$ itself. This is due to the fact that in our proof we are able to construct more CGO solutions than for the magnetic Schr\"{o}dinger operator thanks to the higher order of the bi-harmonic operator.

\vspace{1ex}
In the remaining part of this section we shall discuss an inverse boundary value problem for the perturbed bi-harmonic operator on a bounded domain. Let $\Omega\subset\mathbb{R}^{n}(n\geq 3)$ be a bounded open subset with $C^{\infty}$ boundary. Consider the Dirichlet problem
\begin{equation}\label{Dirichlet2}
\left\{
\begin{array}{rll}\vspace{1ex}
\mathcal{L}_{A,q}u= & 0 &\quad\textrm{ in } \Omega \\ \vspace{1ex}
u= & f_{1} & \quad\textrm{ on } \partial\Omega \\ \vspace{1ex}
\Delta u= & f_{2} & \quad\textrm{ on } \partial\Omega \\
\end{array}
\right.
\end{equation}
with $A\in W^{1,\infty}(\Omega;\mathbb{C}^{n}),q\in L^{\infty}(\Omega;\mathbb{C})$ and $(f_{1},f_{2})\in H^{\frac{7}{2}}(\partial\Omega)\times H^{\frac{3}{2}}(\partial\Omega)$. The operator $\mathcal{L}_{A,q}$, equipped with the domain
$$\mathcal{D}(\mathcal{L}_{A,q}):=\{u\in H^{4}(\Omega):u|_{\partial\Omega}=(\Delta u)|_{\partial\Omega}=0\}$$
is an unbounded closed operator on $L^{2}(\Omega)$ with purely discrete spectrum, see \cite{G}. We make the following assumption
\begin{center}
\textbf{(A)}:  $0$ is not an eigenvalue of the perturbed bi-harmonic operator $\mathcal{L}_{A,q}:\mathcal{D}(\mathcal{L}_{A,q})\rightarrow L^{2}(\Omega)$.
\end{center}
Under the assumption \textbf{(A)}, the Dirichlet problem \eqref{Dirichlet2} has a unique solution $u\in H^{4}(\Omega)$, Let $\nu$ be the unit outer normal vector field to $\partial\Omega$, we define the Dirichlet-to-Neumann map to \eqref{Dirichlet2} as
$$
\begin{array}{rl}\vspace{1ex}
\Lambda_{A,q}:H^{\frac{7}{2}}(\partial\Omega)\times H^{\frac{3}{2}}(\partial\Omega) & \rightarrow H^{\frac{5}{2}}(\partial\Omega)\times H^{\frac{1}{2}}(\partial\Omega)\\ \vspace{1ex}
(f_{1},f_{2}) & \mapsto (\partial_{\nu}u|_{\partial\Omega},\partial_{\nu}(\Delta u)|_{\partial\Omega})
\end{array}
$$
where $u\in H^{4}(\Omega)$ is the solution to the problem \eqref{Dirichlet2}. We can also introduce the Cauchy data set $\mathcal{C}_{A,q}$ for the operator $\mathcal{L}_{A,q}$ defined by
$$\mathcal{C}_{A,q}:=\{(u|_{\partial\Omega},(\Delta u)|_{\partial\Omega},\partial_{\nu}u|_{\partial\Omega},\partial_{\nu}(\Delta u)|_{\partial\Omega}):u\in H^{4}(\Omega), \mathcal{L}_{A,q}u=0 \textrm{ in } \Omega\}.$$
When the assumption \textbf{(A)} holds, the Cauchy data set $\mathcal{C}_{A,q}$ is the graph of the Dirichlet-to-Neumann map $\Lambda_{A,q}$.

Let $\gamma_{1},\gamma_{2}\subset\partial\Omega$ be non-empty open subsets of the boundary. In this paper we are interested in the inverse boundary value problem for the operator $\mathcal{L}_{A,q}$ with partial boundary measurements: assuming that
$$\Lambda_{A^{(1)},q^{(1)}}(f_{1},f_{2})|_{\gamma_{2}\times\gamma_{2}}=\Lambda_{A^{(2)},q^{(2)}}(f_{1},f_{2})|_{\gamma_{2}\times\gamma_{2}}$$
for all $(f_{1},f_{2})\in H^{\frac{7}{2}}(\partial\Omega)\times H^{\frac{3}{2}}(\partial\Omega)$ with $supp(f_{1})\subset\gamma_{1}$ and $supp(f_{2})\subset\gamma_{1}$, can we conclude that $A^{(1)}=A^{(2)}$ and $q^{(1)}=q^{(2)}$ in $\Omega$?

For the bi-harmonic operator, determination of the first order perturbation on a bounded domain $\Omega$ was considered in \cite{KLU3} with partial boundary measurements. The authors showed that, from the Dirichlet-to-Neumann map, one can uniquely determine not only the electric potential $q$, but also the magnetic potential $A$. Again this is different from the situation for the magnetic Schr\"{o}dinger operator where the gauge transformation exists as an obstruction for the recovery of $A$. In this paper, we will improve the uniqueness result in \cite{KLU3} under two different assumptions: in Theorem 1.3, we assume $A^{(1)}=A^{(2)}$ and $q^{(1)}=q^{(2)}$ in a neighborhood of $\partial\Omega$, and show that we still have the uniqueness even when both $\gamma_{1}$ and $\gamma_{2}$ are arbitrarily small; in Theorem 1.4, we assume the inaccessible part of the boundary is contained in a plane, and prove the uniqueness with local data.

\begin{thm}
Let $\Omega\subset\mathbb{R}^{n}(n\geq 3)$ be a bounded domain with $C^{\infty}$ connected boundary. Let $A^{(j)}\in W^{1,\infty}({\Omega};\mathbb{C}^{n})$, $q^{(j)}\in L^{\infty}(\Omega;\mathbb{C}), j=1,2$ be such that the assumption \textbf{(A)} holds for both operators. Assume that $A^{(1)}=A^{(2)}$ and $q^{(1)}=q^{(2)}$ in a neighborhood of the boundary $\partial\Omega$. Let $\gamma_{1},\gamma_{2}\subset\partial\Omega$ be non-empty open subsets of the boundary.
If
$$\Lambda_{A^{(1)},q^{(1)}}(f_{1},f_{2})|_{\gamma_{2}\times\gamma_{2}}=\Lambda_{A^{(2)},q^{(2)}}(f_{1},f_{2})|_{\gamma_{2}\times\gamma_{2}}$$
for all $(f_{1},f_{2})\in (H^{\frac{7}{2}}(\Gamma_{1})\cap\mathcal{E}'(\Gamma_{1}))\times (H^{\frac{3}{2}}(\Gamma_{1})\cap\mathcal{E}'(\Gamma_{1}))$ with $supp(f_{1})\subset\gamma_{1}$ and $supp(f_{2})\subset\gamma_{1}$, then $A^{(1)}=A^{(2)}$ and $q^{(1)}=q^{(2)}$ in $\Omega$.
\end{thm}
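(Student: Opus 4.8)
The strategy is the standard one for partial-data inverse problems for higher-order operators: reduce to an integral identity, plug in CGO solutions, and use the stationary-phase / Fourier-transform argument to recover first the magnetic potential and then the electric potential. The crucial new ingredient here, exploited to get away with \emph{arbitrarily small} $\gamma_1,\gamma_2$, is the hypothesis that $A^{(1)}=A^{(2)}$ and $q^{(1)}=q^{(2)}$ near $\partial\Omega$, which lets us do a \emph{Carleman-estimate based} construction of CGO solutions with boundary control, combined with a reflection/extension argument so that the solutions can be taken to vanish (together with their Laplacian) on the part of the boundary we do not control.

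First I would set up the integral identity. Let $A:=A^{(1)}-A^{(2)}$, $q:=q^{(1)}-q^{(2)}$, both supported in a fixed compact subset of $\Omega$ by the near-boundary hypothesis. Given a solution $u_1$ of $\mathcal{L}_{A^{(1)},q^{(1)}}u_1=0$ and a solution $u_2$ of the formal transpose equation $\mathcal{L}_{A^{(2)},q^{(2)}}^{*}u_2=0$, Green's formula for $\Delta^2$ together with the equality of the partial Cauchy data on $\gamma_2$ yields, after the usual argument that the boundary terms on $\gamma_2$ vanish and the remaining boundary terms are supported where $u_1$ is controlled,
\begin{equation}\label{eq:intid}
\int_{\Omega}\big( A\cdot D u_1 + q\, u_1\big)\overline{u_2}\,dx = 0
\end{equation}
for all such $u_1,u_2$. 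To make \eqref{eq:intid} usable with small $\gamma_1,\gamma_2$ I would use a Carleman estimate for $\Delta^2$ with boundary terms (as in the Bukhgeim--Uhlmann / Kenig--Sjöstrand--Uhlmann tradition, adapted to the bi-Laplacian, e.g. following \cite{KLU3}) to construct CGO solutions $u_j = e^{\pm\frac{1}{h}(\varphi + i\psi)}(a_j + r_j)$ with $\|r_j\|\to 0$ as $h\to0$, where $\varphi$ is a limiting Carleman weight (a linear weight $x\cdot\omega$ after translating so the reflection hyperplane is a coordinate plane). The near-boundary hypothesis ensures that the amplitudes and the error terms can be arranged so that the unwanted boundary contributions in Green's formula vanish, giving \eqref{eq:intid} with \emph{full} CGO solutions while only using data on the small sets.

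Next I would carry out the recovery. Choosing $\zeta_1 = \frac{1}{h}(\varphi + i\psi_1)$ and $\zeta_2$ a companion phase so that $\zeta_1 - \bar\zeta_2 \to i\xi$ for a fixed $\xi \perp$ (reflection direction), and picking the leading amplitudes $a_1,a_2$ to solve the appropriate transport equations, the identity \eqref{eq:intid} in the limit $h\to 0$ produces a relation of the form $\widehat{A}(\xi)\cdot(\text{vector depending on }\omega,\xi) = 0$ together with, after using two independent families of amplitudes (here is where the higher order of $\Delta^2$ helps: we can prescribe more amplitude freedom than for the magnetic Schrödinger operator), enough linearly independent equations to conclude $\widehat A(\xi)=0$ for all $\xi$ in an open cone, hence $A\equiv 0$. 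With $A=0$ in hand, \eqref{eq:intid} reduces to $\int q\, u_1\overline{u_2} = 0$, and the same CGO family gives $\widehat q(\xi) = 0$, so $q\equiv 0$. Throughout, the reflection argument of \cite{I} is used to replace $\Omega$ by its reflection across a bounding hyperplane touching the inaccessible part, so that CGO solutions vanishing on that hyperplane exist — this is exactly what is needed so that Green's formula only sees the controlled boundary pieces.

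The main obstacle I anticipate is the boundary-term bookkeeping in Green's formula for $\Delta^2$: unlike the second-order case, the bi-Laplacian produces \emph{four} boundary traces ($u,\partial_\nu u,\Delta u,\partial_\nu\Delta u$) and the CGO solutions must be constructed to kill the ones we cannot control on $\partial\Omega\setminus\gamma_2$. Making the construction compatible with the reflection (so the reflected CGO solution and its Laplacian genuinely vanish on the reflection hyperplane, with errors small in the right norms) while simultaneously keeping the Carleman weight a limiting weight and retaining enough amplitude freedom to recover the full vector $A$ — not merely $dA$ — is the technically delicate heart of the proof. Everything else (the transport equations, the stationary-phase limit, the Fourier inversion on a cone) is routine once the CGO machinery with boundary control is in place.
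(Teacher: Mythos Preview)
Your proposal has a genuine gap: the reflection argument you invoke does not apply to the geometry of Theorem~1.3. Reflection in the sense of \cite{I} requires the inaccessible part of $\partial\Omega$ to lie in a hyperplane, so that one can double the domain across that hyperplane and build CGO solutions that vanish there together with their Laplacian. That is precisely the hypothesis of Theorem~1.4, not of Theorem~1.3. In Theorem~1.3 the sets $\gamma_1,\gamma_2\subset\partial\Omega$ are \emph{arbitrary} nonempty open subsets of a \emph{curved} boundary, and the complement $\partial\Omega\setminus\gamma_2$ need not be flat at all. There is no hyperplane to reflect across, so your construction of CGO solutions with the required boundary behavior cannot get off the ground. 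Likewise, a Bukhgeim--Uhlmann / Kenig--Sj\"ostrand--Uhlmann boundary Carleman estimate with a linear weight $\varphi=x\cdot\omega$ only controls traces on the illuminated/shadowed faces $\{\pm\,\partial_\nu\varphi>0\}$, which are large sets and bear no relation to the arbitrary small pieces $\gamma_1,\gamma_2$.

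What you are missing is that the near-boundary hypothesis is used not to tame boundary terms for CGO solutions, but to \emph{eliminate the boundary altogether} via unique continuation and Runge approximation. The paper's argument runs as follows. Pick $\Omega_1\Subset\Omega$ smooth with $\Omega\setminus\overline{\Omega}_1$ connected and containing a neighborhood of $\partial\Omega$ on which $A^{(1)}=A^{(2)}$, $q^{(1)}=q^{(2)}$. With $w=u_2-u_1$ as usual, the equality of DN maps on $\gamma_2$ gives $w=\partial_\nu w=0$ on $\gamma_2$, and since $\mathcal{L}_{A^{(2)},q^{(2)}}w=0$ in $\Omega\setminus\overline{\Omega}_1$, unique continuation forces $w\equiv 0$ there; hence all four Cauchy traces of $w$ vanish on $\partial\Omega_1$. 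Green's formula over $\Omega_1$ then yields the integral identity for $u_1\in\mathcal{W}(\Omega)$ against any $v$ solving $\mathcal{L}^\ast_{A^{(2)},q^{(2)}}v=0$ in $\Omega_1$, with \emph{no} boundary restriction on $v$. A Runge-type density result (solutions in $\mathcal{W}(\Omega)$ are $L^2$-dense among all solutions of $\mathcal{L}_{A^{(1)},q^{(1)}}u=0$ in $\Omega_1$) upgrades $u_1$ to an arbitrary solution on $\Omega_1$. After extending the coefficients to a ball $B\supset\Omega_1$ so that they agree outside $\Omega_1$, one is reduced to the \emph{full-data} problem on $B$, where ordinary CGO solutions from Proposition~\ref{existence} suffice and the known arguments of \cite{KLU2,KLU3} finish the proof. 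No reflection and no boundary Carleman estimate are needed.
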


In the following theorem, notice that we need the magnetic potential $A$ and electric potential $q$ to be smooth. This is due to the fact that our proof relies on determination of the boundary value of $A$ from the Cauchy data set $\mathcal{C}_{A,q}$. For the bi-harmonic operator, or more generally for poly-harmonic operators, this result was proved only for smooth $A$ and $q$ in \cite{KLU2}.

\begin{thm}
Let $\Omega\subset\{x\in\mathbb{R}^{n}:x_{n}>0\}(n\geq 3)$ be a bounded domain with $C^{\infty}$ connected boundary, and let $\partial\Omega\cap\{x\in\mathbb{R}^{n}:x_{n}=0\}\neq\emptyset$ and $\gamma:=\partial\Omega\backslash\{x\in\mathbb{R}^{n}:x_{n}=0\}$. Let $A^{(j)}\in C^{\infty}(\overline{\Omega};\mathbb{C}^{n})$, $q^{(j)}\in C^{\infty}(\overline{\Omega};\mathbb{C}), j=1,2$ be such that the assumption \textbf{(A)} holds for both operators.
If
$$\Lambda_{A^{(1)},q^{(1)}}(f_{1},f_{2})|_{\gamma\times\gamma}=\Lambda_{A^{(2)},q^{(2)}}(f_{1},f_{2})|_{\gamma\times\gamma}$$
for all $(f_{1},f_{2})\in H^{\frac{7}{2}}(\partial\Omega)\times H^{\frac{3}{2}}(\partial\Omega)$ with $supp(f_{1})\subset\bar{\gamma}$ and $supp(f_{2})\subset\bar{\gamma}$,
then $A^{(1)}=A^{(2)}$ and $q^{(1)}=q^{(2)}$ in $\Omega$.
\end{thm}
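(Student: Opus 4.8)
\bigskip
\noindent\textbf{Proof plan for Theorem 1.4.}

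The idea is to bring Theorem 1.4 within the reach of complex geometric optics (CGO) by reflecting $\Omega$ across the hyperplane $\Pi:=\{x\in\mathbb{R}^{n}:x_{n}=0\}$ that carries the inaccessible part $\Gamma_{0}:=\partial\Omega\cap\Pi$, in the spirit of the reflection argument of \cite{I} already used in the proofs of Theorem 1.1 and Theorem 1.2. Write $Rx=(x',-x_{n})$, put $\Omega^{*}=R\Omega\subset\{x_{n}<0\}$, and let $\widetilde\Omega$ be the bounded domain with $\overline{\widetilde\Omega}=\overline\Omega\cup\overline{\Omega^{*}}$, so that $\partial\widetilde\Omega=\gamma\cup R\gamma$ while $\Gamma_{0}$ becomes interior to $\widetilde\Omega$. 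First, invoking the boundary determination result of \cite{KLU2} for the bi-harmonic operator --- this is where the smoothness hypothesis $A^{(j)},q^{(j)}\in C^{\infty}(\overline\Omega)$ is used --- the equality of the partial Cauchy data on $\gamma$ forces $A^{(1)}$ and $A^{(2)}$, together with all their derivatives, to coincide on $\gamma$, and likewise $q^{(1)}=q^{(2)}$ on $\gamma$; this information is kept for the final recovery step. I then extend the coefficients to $\widetilde\Omega$ by reflection across $\Pi$: even reflection for $q^{(j)}$ and for the tangential components $A^{(j)}_{1},\dots,A^{(j)}_{n-1}$, odd reflection for the normal component $A^{(j)}_{n}$. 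With these parities the operator $\mathcal L_{\widetilde A^{(j)},\widetilde q^{(j)}}$ commutes with the pull-back $f\mapsto f\circ R$, which therefore sends solutions on $\widetilde\Omega$ to solutions on $\widetilde\Omega$.

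Next I would construct, via a standard CGO construction (as underlies the proofs of Theorems 1.1--1.2), solutions $w^{(1)}$ of $\mathcal L_{\widetilde A^{(1)},\widetilde q^{(1)}}w^{(1)}=0$ and $w^{(2)}$ of the formal adjoint $\mathcal L_{\widetilde A^{(2)},\widetilde q^{(2)}}^{*}w^{(2)}=0$ on $\widetilde\Omega$, of the form $e^{\rho\cdot x}(a+r)$ with $\rho\cdot\rho=0$, $|\operatorname{Re}\rho|$ large, $r$ small. Then $u_{j}:=\bigl(w^{(j)}-w^{(j)}\circ R\bigr)\big|_{\Omega}$ solves $\mathcal L_{A^{(1)},q^{(1)}}u_{1}=0$, resp.\ $\mathcal L_{A^{(2)},q^{(2)}}^{*}u_{2}=0$, in $\Omega$ and is odd in $x_{n}$; in particular $u_{j}=\Delta u_{j}=0$ on $\Gamma_{0}$, and the Dirichlet data $(u_{1}|_{\partial\Omega},\Delta u_{1}|_{\partial\Omega})$ are supported in $\overline\gamma$. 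Let $v\in H^{4}(\Omega)$ solve $\mathcal L_{A^{(2)},q^{(2)}}v=0$ with $(v|_{\partial\Omega},\Delta v|_{\partial\Omega})=(u_{1}|_{\partial\Omega},\Delta u_{1}|_{\partial\Omega})$, which is possible under assumption (A). The hypothesis on the Dirichlet-to-Neumann maps gives $\partial_{\nu}v=\partial_{\nu}u_{1}$ and $\partial_{\nu}\Delta v=\partial_{\nu}\Delta u_{1}$ on $\gamma$, so $w:=u_{1}-v$ has vanishing Cauchy data on $\gamma$ and $w|_{\partial\Omega}=\Delta w|_{\partial\Omega}=0$ on all of $\partial\Omega$; since moreover $u_{2}=\Delta u_{2}=0$ on $\Gamma_{0}$, Green's formula for $\mathcal L_{A^{(2)},q^{(2)}}$ produces no boundary term from $\Gamma_{0}$ either. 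Combined with $\mathcal L_{A^{(2)},q^{(2)}}w=(A^{(2)}-A^{(1)})\cdot Du_{1}+(q^{(2)}-q^{(1)})u_{1}$ this yields
\begin{equation*}
\int_{\Omega}\Bigl[(A^{(1)}-A^{(2)})\cdot Du_{1}\,\overline{u_{2}}+(q^{(1)}-q^{(2)})\,u_{1}\overline{u_{2}}\Bigr]\,dx=0 .
\end{equation*}

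The last step is the usual CGO asymptotics. Substituting the representations and expanding $u_{1}\overline{u_{2}}$, the change of variables $x\mapsto Rx$ together with the reflection parities recasts the diagonal part of the identity as an integral over $\widetilde\Omega$ against the reflected coefficient differences, up to cross terms $w^{(1)}\overline{w^{(2)}\circ R}$ and $(w^{(1)}\circ R)\,\overline{w^{(2)}}$ that must be shown to vanish in the limit. Letting $|\operatorname{Re}\rho|\to\infty$, the leading $O(|\operatorname{Re}\rho|)$ contribution forces the Fourier transform of suitable directional components of $A^{(1)}-A^{(2)}$ to vanish; because for the bi-harmonic operator the available limiting directions are rich enough to capture every component --- the very feature, exploited in \cite{KLU3}, that makes the full potential $A$ and not merely $dA$ recoverable --- one concludes $A^{(1)}=A^{(2)}$ in $\Omega$ (here the boundary identity $A^{(1)}=A^{(2)}$ on $\gamma$ from \cite{KLU2} is used to discard the boundary term arising from an integration by parts on $\gamma$, the one on $\Gamma_{0}$ vanishing since $u_{1}$ does), and the remaining $O(1)$ term then gives $q^{(1)}=q^{(2)}$.

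I expect the main difficulty to be precisely the reflection-and-gluing step. The odd extension forces the reflected normal magnetic potential $\widetilde A^{(j)}_{n}$ to be only $L^{\infty}$ on $\widetilde\Omega$ --- in general discontinuous across $\Pi$, unless $A^{(j)}_{n}$ vanishes on $\Gamma_{0}$ --- so the CGO construction and the associated Carleman/solvability estimates on $\widetilde\Omega$ must be carried out for an $L^{\infty}$ first-order perturbation of $\Delta^{2}$, which should be admissible thanks to the large gap in order between the perturbation and the leading operator. Equally delicate is disposing of the two cross terms above: this requires choosing the complex phases $\rho$ with real part (nearly) tangential to $\Pi$ and with a large oscillation in the $x_{n}$-direction, so that the cross terms oscillate away in the limit; the dimensional room needed to make such a choice while keeping $\rho\cdot\rho=0$ is exactly why the hypothesis $n\geq 3$ is imposed.
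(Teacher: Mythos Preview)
Your proposal is correct and follows the same route as the paper: reflect across $\{x_n=0\}$, build odd-in-$x_n$ CGO solutions, derive the integral identity via Green's formula, invoke the boundary determination of \cite{KLU2}, and pass to the limit. Two points of comparison are worth flagging. First, the paper does not attempt the CGO construction with a merely $L^\infty$ first-order coefficient; instead (as already in the proofs of Theorems~1.1 and~1.2) it replaces $A^{(j)}$ by $A^{(j)}+\nabla\Psi^{(j)}$ with $\Psi^{(j)}\in C^{1,1}$, $\Psi^{(j)}|_{\partial\Sigma}=0$, chosen so that the normal component vanishes on the reflecting hyperplane, which makes the odd reflection $W^{1,\infty}$ and keeps the Carleman estimate (Proposition~\ref{Carleman}, which uses $\operatorname{div}A$) available; since this shift is a gradient it is harmless once $d\tilde A^{(1)}=d\tilde A^{(2)}$ is established. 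Second, your phrase ``the available limiting directions are rich enough to capture every component'' slightly misidentifies the mechanism: taking $a_1=a_2=1$ one only gets $\mu\cdot\widehat{(A^{(1)}-A^{(2)})}(\xi)=0$ for $\mu\perp\xi$, i.e.\ $d(A^{(1)}-A^{(2)})=0$, hence $A^{(1)}-A^{(2)}=\nabla\Phi$. What is special to the bi-harmonic operator is that the amplitude transport equation is \emph{second} order, $(\zeta^{(0)}\cdot\nabla)^2 a=0$, so one may choose $a_1$ with $\zeta^{(0)}_1\cdot\nabla a_1=1$; inserting this and integrating by parts (the boundary term on $\gamma$ drops by the boundary identity you cite, and on $\Gamma_0$ by the reflection symmetry) yields $\widehat{\Phi}\equiv 0$ directly. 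Your integration-by-parts remark indicates you have this in mind, but the extra freedom lies in the amplitudes, not in the phase directions.
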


This paper is structured as follows: in Section 2 we establish a Carleman type estimate for the bi-harmonic operator and then construct a class of CGO solutions on a bounded domain; in Section 3 we show an integral identity and a Runge type approximation theorem; in Section 4 we construct the CGO solutions we desire in the slab by reflecting the CGO solutions constructed in Section 2; Section 5, 6, 7, 8 are devoted to the proof of Theorem 1.1, 1.2, 1,3 and 1.4 respectively. In Appendix A we prove the solvability of the boundary value problem \eqref{Dirichlet1} and some identities used in the proofs of the main theorems.

\section{Carleman estimate and CGO solutions on a bounded domain}
In this section we construct some CGO solutions on a bounded domain to the equation $\mathcal{L}_{A,q}u=0$. CGO solutions have been intensively utilized in establishing uniqueness result in elliptic inverse boundary value problems. For the construction of various CGO solutions and their application, see \cite{BRUZ,CY,CY2,FKSU,KSU,SU}.

Let $\Omega\subset\mathbb{R}^{n}$, $n\geq 3$, be a bounded domain with $C^{\infty}$ boundary. Consider the equation $\mathcal{L}_{A,q}u=0$ in $\Omega$ with $A\in W^{1,\infty}(\Omega;\mathbb{C}^{n})$ and $q\in L^{\infty}(\Omega;\mathbb{C})$. We will construct CGO solutions of the form
\begin{equation}\label{form}
u(x,\zeta,h)=e^{x\cdot\zeta/h}(a(x,\zeta)+r(x,\zeta,h)).
\end{equation}
based on a Carleman estimate. Here $\zeta\in\mathbb{C}^{n}$ is a complex vector satisfying $\zeta\cdot\zeta=0$, $a$ is a smooth amplitude, $r$ is a correction term, $h>0$ is a small semiclassical parameter. To deal with the perturbation, we extend $A\in W^{1,\infty}(\Omega;\mathbb{C}^{n})$ to a Lipschitz vector field compactly supported in $\mathbb{R}^{n}$, extend $q\in L^{\infty}(\Omega;\mathbb{C})$ as zero to $\mathbb{R}^{n}$. We shall work with $\zeta$ depending slightly on $h$, i.e. $\zeta=\zeta^{(0)}+\zeta^{(1)}$ with $\zeta^{(0)}$ independent of $h$, $\zeta^{(1)}=\mathcal{O}(h)$, and $|\textrm{Re } \zeta^{(0)}|=|\textrm{Im } \zeta^{(0)}|=1$.

Consider the conjugated operator
$$
h^{4}e^{-x\cdot\zeta/h}\mathcal{L}_{A,q}e^{x\cdot\zeta/h}=(h^{2}\Delta+2ih\zeta\cdot\nabla)^{2}+h^{3}A\cdot hD-ih^{3}A\cdot \zeta+h^{4}q
$$
In order to eliminate the lowest order term involving $h$ in this expression, we require
\begin{equation}\label{eq_a}
(\zeta^{(0)}\cdot\nabla)^{2}a=0 \quad\quad \textrm{ in } \Omega.
\end{equation}
As $|\textrm{Re } \zeta^{(0)}|=|\textrm{Im } \zeta^{(0)}|=1$, $\zeta^{(0)}\cdot\nabla$ is a $\bar{\partial}$-operator in appropriate coordinates, so the above equation admits a solution $a=a(x,\zeta^{(0)})\in C^{\infty}(\overline{\Omega})$. To find an appropriate correction term, we need a Carleman type estimate. We will use the semiclassical Sobolev spaces $H^{s}_{\textrm{scl}}(\mathbb{R}^{n})$ ($s\in\mathbb{R}$) with the norm $\|f\|_{H^{s}_{\textrm{scl}}(\mathbb{R}^{n})}=\|\langle hD\rangle^{s}f\|_{L^{2}(\mathbb{R}^{n})}$ where $\langle\xi\rangle=(1+|\xi|^{2})^{\frac{1}{2}}$.

\begin{prop}\label{Carleman}
Suppose $A\in W^{1,\infty}(\Omega;\mathbb{C}^{n})$, $q\in L^{\infty}(\Omega;\mathbb{C})$. Then for $h>0$ sufficiently small, there exists a constant $C>0$ independent of $h$ such that
$$\|u\|_{L^{2}(\mathbb{R}^{n})}\leq Ch^{2}\|e^{x\cdot\zeta/h}\mathcal{L}_{A,q}e^{-x\cdot\zeta/h}u\|_{H^{-1}_{\textrm{scl}}(\mathbb{R}^{n})} \quad\quad u\in C^{\infty}_{c}(\Omega).$$
\end{prop}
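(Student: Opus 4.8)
\emph{Proof strategy.} The plan is to reduce the estimate to a Carleman estimate for the unperturbed bi-Laplacian — obtained by iterating the classical first-order semiclassical Carleman estimate for $-h^{2}\Delta$ with a linear Carleman weight — and then to absorb the first- and zeroth-order perturbations into the left-hand side using the extra powers of $h$ they carry. Since $\zeta$ does not depend on $x$, the conjugated operator $L_{\zeta}:=e^{x\cdot\zeta/h}(h^{2}\Delta)e^{-x\cdot\zeta/h}$ is a \emph{constant-coefficient} second-order semiclassical differential operator (with no zeroth-order term, as $\zeta\cdot\zeta=0$), and $e^{x\cdot\zeta/h}(h^{4}\Delta^{2})e^{-x\cdot\zeta/h}=L_{\zeta}^{2}$. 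Because $|\textrm{Re }\zeta|=1+\mathcal{O}(h)$, the weight $x\mapsto\textrm{Re}(x\cdot\zeta)$ is a linear (limiting) Carleman weight, and I will use the classical a priori estimate (see \cite{SU,KSU}): there is $C>0$ so that, for $h$ small and all $v\in C^{\infty}_{c}(\Omega)$,
$$\frac{h}{C}\|v\|_{L^{2}(\mathbb R^{n})}\leq\|L_{\zeta}v\|_{L^{2}(\mathbb R^{n})},\qquad\text{together with its negative-order form}\qquad\frac{h}{C}\|v\|_{L^{2}}\leq\|L_{\zeta}v\|_{H^{-1}_{\textrm{scl}}}.$$
The second form is an equally standard consequence of the first: $L_{\zeta}$ commutes with the Fourier multiplier $\langle hD\rangle^{-1}$, so $\|L_{\zeta}v\|_{H^{-1}_{\textrm{scl}}}=\|L_{\zeta}\langle hD\rangle^{-1}v\|_{L^{2}}$, and one concludes by splitting in frequency (for $|hD|\geq R_{0}$, a fixed large cutoff, $L_{\zeta}$ is elliptic; for $|hD|\leq R_{0}$ the norms $\|\cdot\|_{H^{-1}_{\textrm{scl}}}$ and $\|\cdot\|_{L^{2}}$ are comparable and the first estimate applies), the loss of compact support incurred by $\langle hD\rangle^{-1}$ being harmless for small $h$ because its Schwartz kernel has exponential off-diagonal decay on scale $h$.

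Since $L_{\zeta}$ is a differential operator, $L_{\zeta}u\in C^{\infty}_{c}(\Omega)$ whenever $u\in C^{\infty}_{c}(\Omega)$, so applying the negative-order estimate to $v=L_{\zeta}u$ and the basic estimate to $v=u$ yields
$$\frac{h^{2}}{C^{2}}\|u\|_{L^{2}}\leq\frac{h}{C}\|L_{\zeta}u\|_{L^{2}}\leq\|L_{\zeta}(L_{\zeta}u)\|_{H^{-1}_{\textrm{scl}}}=\|L_{\zeta}^{2}u\|_{H^{-1}_{\textrm{scl}}},$$
that is, $\|u\|_{L^{2}}\leq Ch^{2}\|e^{x\cdot\zeta/h}\Delta^{2}e^{-x\cdot\zeta/h}u\|_{H^{-1}_{\textrm{scl}}}$, which is the claim when $A=0$ and $q=0$. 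It remains to bring in the perturbation. A direct computation gives $e^{x\cdot\zeta/h}(A\cdot D)e^{-x\cdot\zeta/h}=A\cdot D+\frac{i}{h}(A\cdot\zeta)$, hence
$$h^{4}e^{x\cdot\zeta/h}\mathcal{L}_{A,q}e^{-x\cdot\zeta/h}=L_{\zeta}^{2}+h^{3}\,A\cdot(hD)+ih^{3}(A\cdot\zeta)+h^{4}q.$$
Combining this with the previous bound, dividing by $h^{2}$, and using $\|\cdot\|_{H^{-1}_{\textrm{scl}}}\leq\|\cdot\|_{L^{2}}$ together with $q\in L^{\infty}$ and $|\zeta|=\mathcal{O}(1)$, the contributions of $ih^{3}(A\cdot\zeta)$ and $h^{4}q$ are bounded by $Ch\|u\|_{L^{2}}$ and $Ch^{2}\|u\|_{L^{2}}$ respectively. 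For the magnetic term I write $A\cdot(hD)u=\frac{1}{i}\sum_{j}h\partial_{j}(A_{j}u)-\frac{h}{i}(\textrm{div }A)u$; since $\langle hD\rangle^{-1}(h\partial_{j})$ is bounded on $L^{2}$ uniformly in $h$ and $\textrm{div }A\in L^{\infty}$ (here $A\in W^{1,\infty}$ is used), one gets $\|A\cdot(hD)u\|_{H^{-1}_{\textrm{scl}}}\leq C\|u\|_{L^{2}}$, so this term contributes at most $Ch\|u\|_{L^{2}}$. All three correction terms are absorbed into the left-hand side once $h$ is small enough, which gives the proposition.

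The analytic heart of the argument — and the step I expect to be the main obstacle — is the base first-order Carleman estimate for the linear weight. For a linear weight $L_{\zeta}$ has constant coefficients, so the Poisson bracket of the real and imaginary parts of its symbol vanishes on the characteristic variety, which has codimension two; consequently the estimate is not a pointwise symbol bound but a genuine subelliptic-type inequality that crucially exploits the compact support of $v$ — this is exactly what limits how much $\widehat v$ can concentrate near the characteristic set. Granting this (classical) estimate and its negative-order variant, the iteration and the absorption of the $W^{1,\infty}$ and $L^{\infty}$ perturbations are routine manipulations with semiclassical Sobolev norms.
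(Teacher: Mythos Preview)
Your proof is correct and follows essentially the same route as the paper: iterate the first-order Carleman estimate for the conjugated Laplacian with linear weight to obtain the bi-Laplacian estimate, then absorb the lower-order terms $h^{3}A\cdot(hD)$, $ih^{3}A\cdot\zeta$, and $h^{4}q$ into the left-hand side using $\|\cdot\|_{H^{-1}_{\textrm{scl}}}\leq\|\cdot\|_{L^{2}}$ and the divergence trick $A\cdot(hD)u=\frac{1}{i}\sum_{j}h\partial_{j}(A_{j}u)-\frac{h}{i}(\textrm{div }A)u$. The only cosmetic difference is that the paper cites the negative-order estimate $\|u\|_{L^{2}}\leq C_{1}h\|e^{x\cdot\zeta/h}\Delta e^{-x\cdot\zeta/h}u\|_{H^{-1}_{\textrm{scl}}}$ directly from \cite{KS} rather than deriving it from the $L^{2}\!\to\!L^{2}$ version as you do.
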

\begin{proof}
From \cite[Proposition 4.2]{KS}, we can find a constant $C_{1}>0$ independent of $h$ such that for all $u\in C^{\infty}_{c}(\Omega)$
$$
\|u\|_{L^{2}(\mathbb{R}^{n})}\leq C_{1}h\|e^{x\cdot\zeta/h}\Delta e^{-x\cdot\zeta/h}u\|_{H^{-1}_{\textrm{scl}}(\mathbb{R}^{n})}
$$
Iterate to get
\begin{equation}\label{CE}
\|u\|_{L^{2}(\mathbb{R}^{n})}\leq C^{2}_{1}h^{2}\|e^{x\cdot\zeta/h}\Delta^{2} e^{-x\cdot\zeta/h}u\|_{H^{-1}_{\textrm{scl}}(\mathbb{R}^{n})}
\end{equation}
We can add the zeroth order term $h^{2}q$ to \eqref{CE} since
$$
h^{2}\|qu\|_{H^{-1}_{\textrm{scl}}(\mathbb{R}^{n})}\leq h^{2}\|qu\|_{L^{2}(\mathbb{R}^{n})}\leq h^{2}\|q\|_{L^{\infty}(\mathbb{R}^{n})}\|u\|_{L^{2}(\mathbb{R}^{n})}.
$$
We can add the first order term $h^{2}e^{x\cdot\zeta/h}A\cdot De^{-x\cdot\zeta/h}$ as
$$
h^{2}e^{x\cdot\zeta/h}A\cdot De^{-x\cdot\zeta/h}=h(A\cdot hD+iA\cdot\zeta)
$$
and
$$h\|A\cdot\zeta u\|_{H^{-1}_{scl}(\mathbb{R}^{n})}\leq h\|A\cdot\zeta u\|_{L^{2}(\mathbb{R}^{2})}\leq h\|A\cdot\zeta\|_{L^{\infty}(\mathbb{R}^{n})}\|u\|_{L^{2}(\mathbb{R}^{n})}.$$
$$
\begin{array}{rl}\vspace{1ex}
h\|A\cdot hDu\|_{H^{-1}_{\textrm{scl}}(\mathbb{R}^{n})}\leq & h\displaystyle\sum^{n}_{j=1}\|hD_{j}(A_{j}u)\|_{H^{-1}_{\textrm{scl}}(\mathbb{R}^{n})}+\mathcal{O}(h^{2})\|(\textrm{div} A)u\|_{H^{-1}_{\textrm{scl}}(\mathbb{R}^{n})}\\ \vspace{1ex}
\leq & \mathcal{O}(h)\displaystyle\sum^{n}_{j=1}\|A_{j}u\|_{L^{2}(\mathbb{R}^{n})}+\mathcal{O}(h^{2})\|u\|_{L^{2}(\mathbb{R}^{2})} \\ \vspace{1ex}
\leq & \mathcal{O}(h)\|u\|_{L^{2}(\mathbb{R}^{n})}.
\end{array}
$$
After adding these perturbation terms, we get the desired result.
\end{proof}

Denote $\|f\|^{2}_{H^{1}_{scl}(\Omega)}:=\|f\|^{2}_{L^{2}(\Omega)}+h^{2}\|\nabla f\|^{2}_{L^{2}(\Omega)}$. The following solvability result is an immediate consequence of the above Carleman estimate and the Hahn-Banach Theorem.
\begin{prop}\label{solvability}
Suppose $A\in W^{1,\infty}(\Omega;\mathbb{C}^{n})$, $q\in L^{\infty}(\Omega;\mathbb{C})$. Then for any $f\in L^{2}(\Omega)$, the equation
$$e^{-x\cdot\zeta/h}\mathcal{L}_{A,q}e^{x\cdot\zeta/h}r=f \quad\quad \textrm{ in }\Omega$$
has a solution $r\in H^{1}(\Omega)$ with $\|r\|_{H^{1}_{\textrm{scl}}(\Omega)}\leq\mathcal{O}(h^{2})\|f\|_{L^{2}(\Omega)}$.
\end{prop}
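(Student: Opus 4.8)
The plan is to obtain Proposition~\ref{solvability} from the Carleman estimate of Proposition~\ref{Carleman} by the usual functional-analytic duality argument applied to the transpose operator. Abbreviate $P:=e^{-x\cdot\zeta/h}\mathcal{L}_{A,q}e^{x\cdot\zeta/h}$, the operator we wish to invert, and let $P^{t}$ denote its transpose with respect to the bilinear pairing $\langle u,v\rangle=\int uv\,dx$. Since multiplication operators are self-transpose, $P^{t}=e^{x\cdot\zeta/h}\,\mathcal{L}_{A,q}^{t}\,e^{-x\cdot\zeta/h}$, and an integration by parts gives $\mathcal{L}_{A,q}^{t}=\Delta^{2}-A\cdot D+\bigl(q+i\,\textrm{div}\,A\bigr)=\mathcal{L}_{-A,\,q+i\,\textrm{div}\,A}$. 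Because $A\in W^{1,\infty}(\Omega;\mathbb{C}^{n})$ we have $\textrm{div}\,A\in L^{\infty}(\Omega;\mathbb{C})$, so $\mathcal{L}_{A,q}^{t}$ is again a bi-harmonic operator with a first order perturbation whose coefficients satisfy the hypotheses of Proposition~\ref{Carleman}. Applying that proposition to $\mathcal{L}_{A,q}^{t}$ yields a constant $C>0$, independent of $h$, such that for all $v\in C^{\infty}_{c}(\Omega)$
$$\|v\|_{L^{2}(\mathbb{R}^{n})}\leq Ch^{2}\bigl\|e^{x\cdot\zeta/h}\mathcal{L}_{A,q}^{t}e^{-x\cdot\zeta/h}v\bigr\|_{H^{-1}_{\textrm{scl}}(\mathbb{R}^{n})}=Ch^{2}\|P^{t}v\|_{H^{-1}_{\textrm{scl}}(\mathbb{R}^{n})}.$$
This is the one place where the $W^{1,\infty}$, rather than merely $L^{\infty}$, assumption on $A$ is used.

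With this estimate in hand I would run the Hahn--Banach scheme. On the linear subspace $\mathcal{W}:=\{P^{t}v:v\in C^{\infty}_{c}(\Omega)\}\subset H^{-1}_{\textrm{scl}}(\mathbb{R}^{n})$ define a linear functional by $\ell(P^{t}v):=\int_{\Omega}vf\,dx$. The estimate above shows $P^{t}$ is injective on $C^{\infty}_{c}(\Omega)$, so $\ell$ is well defined, and
$$|\ell(P^{t}v)|\leq\|v\|_{L^{2}(\Omega)}\|f\|_{L^{2}(\Omega)}\leq Ch^{2}\|f\|_{L^{2}(\Omega)}\,\|P^{t}v\|_{H^{-1}_{\textrm{scl}}(\mathbb{R}^{n})},$$
so $\ell$ is bounded on $\mathcal{W}$ with norm at most $Ch^{2}\|f\|_{L^{2}(\Omega)}$. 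By the Hahn--Banach theorem $\ell$ extends to a bounded linear functional on all of $H^{-1}_{\textrm{scl}}(\mathbb{R}^{n})$ with the same norm bound. Since the dual of $H^{-1}_{\textrm{scl}}(\mathbb{R}^{n})$, with respect to the extension of the $L^{2}$ pairing, is $H^{1}_{\textrm{scl}}(\mathbb{R}^{n})$, there is an $r\in H^{1}_{\textrm{scl}}(\mathbb{R}^{n})$ with $\|r\|_{H^{1}_{\textrm{scl}}(\mathbb{R}^{n})}\leq Ch^{2}\|f\|_{L^{2}(\Omega)}$ representing this extension.

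Finally I would translate the representation back into the equation. By construction $\langle P^{t}v,r\rangle=\int_{\Omega}vf\,dx$ for every $v\in C^{\infty}_{c}(\Omega)$, and by the definition of the transpose the left-hand side equals the distributional pairing $\langle v,Pr\rangle$; hence $Pr=f$ in $\mathcal{D}'(\Omega)$, that is, $e^{-x\cdot\zeta/h}\mathcal{L}_{A,q}e^{x\cdot\zeta/h}r=f$ in $\Omega$. Restricting $r$ to $\Omega$ gives an $H^{1}(\Omega)$ solution, and since $\|g\|_{H^{1}_{\textrm{scl}}(\mathbb{R}^{n})}^{2}=\|g\|_{L^{2}(\mathbb{R}^{n})}^{2}+h^{2}\|\nabla g\|_{L^{2}(\mathbb{R}^{n})}^{2}$ and restriction only decreases these quantities, we conclude $\|r\|_{H^{1}_{\textrm{scl}}(\Omega)}\leq\mathcal{O}(h^{2})\|f\|_{L^{2}(\Omega)}$, as claimed. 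The argument is entirely soft; the only points demanding any care are the computation of $\mathcal{L}_{A,q}^{t}$ (to confirm it still falls under Proposition~\ref{Carleman}) and keeping track of the $H^{-1}_{\textrm{scl}}$--$H^{1}_{\textrm{scl}}$ duality, together with the harmless bookkeeping of complex conjugates if one prefers the sesquilinear $L^{2}$ inner product in place of the bilinear pairing. There is no genuine analytic obstacle beyond the Carleman estimate itself, which has already been established.
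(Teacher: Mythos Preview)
Your proposal is correct and follows essentially the same Hahn--Banach duality argument as the paper's proof; the only cosmetic difference is that you work with the bilinear transpose $P^{t}=e^{x\cdot\zeta/h}\mathcal{L}_{-A,\,q+i\,\textrm{div}\,A}e^{-x\cdot\zeta/h}$ whereas the paper uses the sesquilinear adjoint $\mathcal{L}^{\ast}_{\zeta}=e^{x\cdot\zeta/h}\mathcal{L}_{\bar{A},\,i^{-1}\nabla\cdot\bar{A}+\bar{q}}e^{-x\cdot\zeta/h}$, a distinction you already flag as harmless. The paper also makes the extension of $A$, $q$, and $f$ to $\mathbb{R}^{n}$ explicit at the outset, but this is implicit in your use of $C^{\infty}_{c}(\Omega)$ test functions and the $H^{-1}_{\textrm{scl}}(\mathbb{R}^{n})$ norm.
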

\begin{proof}
We extend $A$ to a compactly supported Lipschitz vector field in $\mathbb{R}^{n}$, extend $q$ and $f$ as zero, and solve the equation in $\mathbb{R}^{n}$. Denote $\mathcal{L}_{\zeta}:=e^{-x\cdot\zeta/h}\mathcal{L}_{A,q}e^{x\cdot\zeta/h}$, the $L^{2}$-adjoint of $\mathcal{L}_{\zeta}$ is given by
$$\mathcal{L}^{\ast}_{\zeta}:=e^{x\cdot\zeta/h}\mathcal{L}^{\ast}_{A,q}e^{-x\cdot\zeta/h}=e^{x\cdot\zeta/h}
\mathcal{L}_{\bar{A},i^{-1}\nabla\cdot\bar{A}+\bar{q}}e^{-x\cdot\zeta/h}.$$
Consider the complex linear functional
$$L:\mathcal{L}^{\ast}_{\zeta}C^{\infty}_{c}(\Omega)\rightarrow\mathbb{C} \quad\quad \mathcal{L}^{\ast}_{\zeta}u\mapsto (u,f)_{L^{2}(\mathbb{R}^{n})}.$$
Applying Proposition \ref{Carleman} with $\mathcal{L}_{A,q}$ replaced by $\mathcal{L}^{\ast}_{A,q}$, we see the map $L$ is well-defined and for any $u\in C^{\infty}_{c}(\Omega)$, we have
$$
|L(\mathcal{L}^{\ast}_{\zeta}u)|=|(u,f)_{L^{2}(\mathbb{R}^{n})}|\leq \|u\|_{L^{2}(\mathbb{R}^{n})}\|f\|_{L^{2}(\mathbb{R}^{n})}\leq Ch^{2}\|\mathcal{L}^{\ast}_{\zeta}u\|_{H^{-1}_{\textrm{scl}}(\mathbb{R}^{n})}\|f\|_{L^{2}(\mathbb{R}^{n})}.
$$
This shows that $L$ is bounded in the $H^{-1}(\mathbb{R}^{n})$-norm. By the Hahn-Banach theorem we can extend $L$ to a bounded linear functional $\tilde{L}$ on $H^{-1}(\mathbb{R}^{n})$ without increasing the norm. Thus, by Riesz representation theorem, there exists $r\in H^{1}(\mathbb{R}^{n})$ such that for all $u\in H^{-1}(\mathbb{R}^{n})$ we have
$$\tilde{L}(u)=(u,r)_{H^{-1}(\mathbb{R}^{n}),H^{1}(\mathbb{R}^{n})} \quad \textrm{ and } \quad \|r\|_{H^{1}_{\textrm{scl}}(\mathbb{R}^{n})}\leq Ch^{2}\|f\|_{L^{2}(\mathbb{R}^{n})}.$$
Here $(u,r)_{H^{-1}(\mathbb{R}^{n}),H^{1}(\mathbb{R}^{n})}$ stands for the $L^{2}$-duality. It follows that $\mathcal{L}_{\zeta}r=f$ in $\mathbb{R}^{n}$, hence also in $\Omega$, and $\|r\|_{H^{1}_{scl}(\Omega)}\leq
\|r\|_{H^{1}_{scl}(\mathbb{R}^{n})}\leq Ch^{2}\|f\|_{L^{2}(\mathbb{R}^{n})}=Ch^{2}\|f\|_{L^{2}(\Omega)}$.
\end{proof}

Now we can complete the construction of the CGO solution in \eqref{form}. Equation \eqref{eq_a} gives $e^{-x\cdot\zeta/h}\mathcal{L}_{A,q}e^{x\cdot\zeta/h}a=\mathcal{O}(h^{-1})$. From Proposition \ref{solvability}, we can find $r\in H^{1}(\Omega)$ with $\|r\|_{H^{1}_{\textrm{scl}}(\Omega)}=\mathcal{O}(h)$ such that
$$
e^{-x\cdot\zeta/h}\mathcal{L}_{A,q}e^{x\cdot\zeta/h}r=-e^{-x\cdot\zeta/h}\mathcal{L}_{A,q}e^{x\cdot\zeta/h}a.
$$
Summing up, we have proved
\begin{prop}\label{existence}
Let $A\in W^{1,\infty}(\Omega;\mathbb{C}^{n})$, $q\in L^{\infty}(\Omega;\mathbb{C})$, and $\zeta\in\mathbb{C}^{n}$ be such that $\zeta\cdot\zeta=0$. Then for all $h>0$ small enough, there exist solutions $u\in H^{1}(\Omega)$ to the equation $\mathcal{L}_{A,q}u=0$ in $\Omega$ of the form
$$u(x,\zeta,h)=e^{x\cdot\zeta/h}(a(x,\zeta^{(0)})+r(x,\zeta,h)),$$
where $a\in C^{\infty}(\overline{\Omega})$ satisfies \eqref{eq_a} and $\|r\|_{H^{1}_{\textrm{scl}}(\Omega)}=\mathcal{O}(h)$.
\end{prop}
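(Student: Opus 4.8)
The plan is to produce $u$ in the form \eqref{form} by first pinning down the amplitude and then correcting it. Since $\mathrm{Re}\,\zeta^{(0)}$ and $\mathrm{Im}\,\zeta^{(0)}$ are orthonormal, the operator $\zeta^{(0)}\cdot\nabla$ is, in suitable linear coordinates, a constant multiple of a Cauchy--Riemann operator, so \eqref{eq_a} has a solution $a=a(x,\zeta^{(0)})\in C^{\infty}(\overline{\Omega})$ (for instance any function holomorphic in the relevant complex variable), as already noted after \eqref{eq_a}. Fix such an $a$. Then $u=e^{x\cdot\zeta/h}(a+r)$ solves $\mathcal{L}_{A,q}u=0$ in $\Omega$ if and only if
$$e^{-x\cdot\zeta/h}\mathcal{L}_{A,q}e^{x\cdot\zeta/h}r=-\,e^{-x\cdot\zeta/h}\mathcal{L}_{A,q}e^{x\cdot\zeta/h}a\qquad\textrm{in }\Omega,$$
so it remains to solve this equation for the correction term $r$.

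Next I would bound the right-hand side in $L^{2}(\Omega)$. Multiplying by $h^{4}$ and using
$$h^{4}e^{-x\cdot\zeta/h}\mathcal{L}_{A,q}e^{x\cdot\zeta/h}=(h^{2}\Delta+2ih\zeta\cdot\nabla)^{2}+h^{3}A\cdot hD-ih^{3}A\cdot\zeta+h^{4}q,$$
I expand $(h^{2}\Delta+2ih\zeta\cdot\nabla)^{2}=h^{4}\Delta^{2}+4ih^{3}\Delta(\zeta\cdot\nabla)-4h^{2}(\zeta\cdot\nabla)^{2}$ and apply it to $a$, writing $\zeta=\zeta^{(0)}+\zeta^{(1)}$ with $\zeta^{(1)}=\mathcal{O}(h)$. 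The only term that is not manifestly $\mathcal{O}(h^{3})$ is $-4h^{2}(\zeta\cdot\nabla)^{2}a$, and $(\zeta\cdot\nabla)^{2}a=(\zeta^{(0)}\cdot\nabla)^{2}a+2(\zeta^{(0)}\cdot\nabla)(\zeta^{(1)}\cdot\nabla)a+(\zeta^{(1)}\cdot\nabla)^{2}a=\mathcal{O}(h)$ by \eqref{eq_a}, since $\zeta^{(1)}=\mathcal{O}(h)$ and $a\in C^{\infty}(\overline{\Omega})$; hence $-4h^{2}(\zeta\cdot\nabla)^{2}a=\mathcal{O}(h^{3})$. The contributions $h^{4}\Delta^{2}a$, $4ih^{3}\Delta(\zeta\cdot\nabla)a$, $h^{3}A\cdot hD\,a$, $-ih^{3}A\cdot\zeta\,a$, $h^{4}qa$ are all $\mathcal{O}(h^{3})$ in $L^{2}(\Omega)$ as well, using $A\in W^{1,\infty}(\Omega)$, $q\in L^{\infty}(\Omega)$, $\Omega$ bounded, and $a$ smooth up to the boundary. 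Dividing by $h^{4}$ gives $e^{-x\cdot\zeta/h}\mathcal{L}_{A,q}e^{x\cdot\zeta/h}a=\mathcal{O}(h^{-1})$ in $L^{2}(\Omega)$.

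Finally, I would invoke Proposition \ref{solvability} with $f:=-\,e^{-x\cdot\zeta/h}\mathcal{L}_{A,q}e^{x\cdot\zeta/h}a\in L^{2}(\Omega)$ to obtain $r\in H^{1}(\Omega)$ solving the conjugated equation with $\|r\|_{H^{1}_{\textrm{scl}}(\Omega)}\leq\mathcal{O}(h^{2})\|f\|_{L^{2}(\Omega)}=\mathcal{O}(h)$. Then $u=e^{x\cdot\zeta/h}(a+r)$ lies in $H^{1}(\Omega)$ (as $\Omega$ is bounded, $e^{x\cdot\zeta/h}$ is smooth on $\overline{\Omega}$, $a\in C^{\infty}(\overline{\Omega})$, $r\in H^{1}(\Omega)$), solves $\mathcal{L}_{A,q}u=0$ in $\Omega$, and has the asserted form, which would complete the proof. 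There is no essential difficulty here: the analytic heavy lifting is the Carleman estimate and the attendant solvability result (Propositions \ref{Carleman} and \ref{solvability}), already in hand; the only point requiring care is the power counting in the middle step, in particular checking that the $h$-dependence of $\zeta$ through $\zeta^{(1)}=\mathcal{O}(h)$ does not degrade the $\mathcal{O}(h^{-1})$ bound on $e^{-x\cdot\zeta/h}\mathcal{L}_{A,q}e^{x\cdot\zeta/h}$ applied to the leading amplitude, and hence the $\mathcal{O}(h)$ bound on $r$.
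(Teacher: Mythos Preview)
Your proposal is correct and follows exactly the same route as the paper: choose $a$ solving \eqref{eq_a}, show $e^{-x\cdot\zeta/h}\mathcal{L}_{A,q}e^{x\cdot\zeta/h}a=\mathcal{O}(h^{-1})$ in $L^{2}(\Omega)$, and then invoke Proposition~\ref{solvability} to produce $r$ with $\|r\|_{H^{1}_{\textrm{scl}}(\Omega)}=\mathcal{O}(h)$. In fact you supply more detail than the paper does, since the paper simply asserts the $\mathcal{O}(h^{-1})$ bound while you carry out the power counting explicitly (including the verification that the $h$-dependence of $\zeta$ via $\zeta^{(1)}=\mathcal{O}(h)$ keeps $-4h^{2}(\zeta\cdot\nabla)^{2}a$ at order $h^{3}$).
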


\noindent\textbf{Remark:} Sometimes we may need complex geometric optics solutions belonging to $H^{4}(\Omega)$, we can obtain such solutions as follows. Let $\Omega'\supset\supset\Omega$ be a bounded domain with smooth boundary. Extend $A\in W^{1,\infty}(\Omega;\mathbb{C}^{n})$ and $q\in L^{\infty}(\Omega;\mathbb{C})$ to functions in $W^{1,\infty}(\Omega';\mathbb{C}^{n})$ and $L^{\infty}(\Omega';\mathbb{C})$, respectively. By elliptic regularity, the complex geometric optics solutions constructed as above in $\Omega'$ will belong to $H^{4}(\Omega)$.

\section{Integral identity and Runge approximation}
For the bi-harmonic operator, Green's formula gives
\begin{equation} \label{green}
\begin{array}{ll} \vspace{1ex}
&\displaystyle\int_{\Omega}(\mathcal{L}_{A,q}u)\bar{v}\,dx-\int_{\Omega}u\overline{\mathcal{L}^{\ast}_{A,q}v}\,dx=-i\int_{\partial\Omega}\nu(x)\cdot Au\bar{v}\,dS-\int_{\partial\Omega}\partial_{\nu}(-\Delta u)\bar{v}\,dS \\
&+\displaystyle\int_{\partial\Omega}(-\Delta u)\overline{\partial_{\nu}v}\,dS-\int_{\partial\Omega}\partial_{\nu}u\overline{(-\Delta v)}\,dS+\int_{\partial\Omega}u\overline{(\partial_{\nu}(-\Delta v))}\,dS\\
\end{array}
\end{equation}
for all $u,v\in H^{4}(\Omega)$. Here $\mathcal{L}^{\ast}_{A,q}:=\mathcal{L}_{\bar{A},i^{-1}\nabla\cdot\bar{A}+\bar{q}}$ is the adjoint of $\mathcal{L}_{A,q}$, $\nu$ is the unit outer normal vector to the boundary $\partial\Omega$, and $dS$ is the surface measure on $\partial\Omega$.

For $(f_{1},f_{2})\in (H^{\frac{7}{2}}(\Gamma_{1})\cap\mathcal{E}'(\Gamma_{1}))\times (H^{\frac{3}{2}}(\Gamma_{1})\cap\mathcal{E}'(\Gamma_{1}))$ with supp$(f_{1})\subset\gamma_{1}$ and supp$(f_{2})\subset\gamma_{1}$, let $u_{1}\in H^{4}(\Sigma)$ solve
$$\left\{
\begin{array}{rcll}
\mathcal{L}_{A^{(1)},q^{(1)}}u_{1} &=& 0 & \quad\textrm{ in } \Sigma \\
u_{1}=f_{1} \quad\quad \Delta u_{1}&=& f_{2} & \quad\textrm{ on } \Gamma_{1} \\
u_{1}=0 \quad\quad \Delta u_{1}&=& 0 & \quad\textrm{ on } \Gamma_{2} \\
\end{array}
\right.$$
Let $u_{2}\in H^{4}(\Sigma)$ solve
$$\left\{
\begin{array}{rcll}
\mathcal{L}_{A^{(2)},q^{(2)}}u_{2} &=& 0 & \quad\textrm{ in } \Sigma \\
u_{2}=u_{1} \quad\quad \Delta u_{2}&=& \Delta u_{1} & \quad\textrm{ on } \Gamma_{1}\cup\Gamma_{2} \\
\end{array}
\right.$$
Let $w:=u_{2}-u_{1}$, then
\begin{equation} \label{difference}
\mathcal{L}_{A^{(2)},q^{(2)}}w=(A^{(1)}-A^{(2)})\cdot Du_{1}+(q^{(1)}-q^{(2)})u_{1}.
\end{equation}
Suppose $\Lambda_{A^{(1)},q^{(1)}}(f_{1},f_{2})|_{\gamma_{2}}=\Lambda_{A^{(2)},q^{(2)}}(f_{1},f_{2})|_{\gamma_{2}}$, then
\begin{equation}\label{normal}
\partial_{\nu}u_{1}|_{\gamma_{2}}=\partial_{\nu}u_{2}|_{\gamma_{2}}, \quad\quad \partial_{\nu}(\Delta u_{1})|_{\gamma_{2}}=\partial_{\nu}(\Delta u_{2})|_{\gamma_{2}},
\end{equation}
from which we conclude $\partial_{\nu}w=\partial_{\nu}(\Delta w)=0$ on $\gamma_{2}$. We denote
$$l_{1}:=\Gamma_{1}\cap\overline{B}\subset\gamma_{1}, \quad l_{2}:=\Gamma_{2}\cap\overline{B}\subset\gamma_{2}, \quad l_{3}:=\Sigma\cap\partial B.$$
Apparently $\partial(\Sigma\cap B)=l_{1}\cup l_{2}\cup l_{3}$. It follows from \eqref{difference} that $w\in H^{4}(\Sigma)$ is a solution to
$$\Delta^{2}w=0 \quad \textrm{ in } \quad \Sigma\backslash\overline{B}.$$
As $w=\partial_{\nu}w=0$ on $\gamma_{2}\backslash\overline{l}_{2}$, by unique continuation, $w=0$ in $\Sigma\backslash\overline{B}$. Therefore $w=\Delta w=\partial_{\nu}w=\partial_{\nu}(\Delta w)=0$ on $l_{3}$. We record these results here:
\begin{equation} \label{results}
\begin{array}{cl}
w=0 & \textrm{ on } l_{1}\\
w=\partial_{\nu}w=\partial_{\nu}(\Delta w)=0 & \textrm{ on } l_{2} \\
w=\partial_{\nu}w=\Delta w=\partial_{\nu}(\Delta w)=0 & \textrm{ on } l_{3} \\
\end{array}
\end{equation}

If $v$ is a solution of the equation
\begin{equation}\label{eqn_v}
\mathcal{L}^{\ast}_{A^{(2)},q^{(2)}}v=0 \quad\textrm{ in } \Sigma\cap B
\end{equation}
such that
\begin{equation}\label{condition_v}
v=\Delta v=0 \quad \textrm{ on } l_{1}.
\end{equation}
Taking into consideration of $\eqref{difference}$ and $\eqref{eqn_v}$, we apply \eqref{green} to $w$ and $v$ over $\Sigma\cap B$ to get
\begin{equation} \label{identity1}
\begin{array}{ll} \vspace{1ex}
&\displaystyle\int_{\Sigma\cap B}((A^{(1)}-A^{(2)})\cdot Du_{1})\bar{v}\,dx+\int_{\Sigma\cap B}(q^{(1)}-q^{(2)})u_{1}\bar{v}\,dx \\ \vspace{1ex}
=& -i\displaystyle\int_{\partial(\Sigma\cap B)}\nu(x)\cdot A^{(2)}w\bar{v}\,dS-\int_{\partial(\Sigma\cap B)}\partial_{\nu}(-\Delta w)\bar{v}\,dS \\ \vspace{1ex}
&+\displaystyle\int_{\partial(\Sigma\cap B)}(-\Delta w)\overline{\partial_{\nu}v}\,dS-\int_{\partial(\Sigma\cap B)}\partial_{\nu}w\overline{(-\Delta v)}\,dS \\ \vspace{1ex}
&+\displaystyle\int_{\partial(\Sigma\cap B)}w\overline{(\partial_{\nu}(-\Delta v))}\,dS\\
:=&I_{1}+I_{2}+I_{3}+I_{4}+I_{5}
\end{array}
\end{equation}
We analyze each term on the right-hand side and show $I_{j}=0, j=1,\cdots,5$.

$$I_{1}:=-i\displaystyle\int_{\partial(\Sigma\cap B)}\nu(x)\cdot A^{(2)}w\bar{v}\,dS.$$
By \eqref{results}, $w=0$ on $\partial(\Sigma\cap B)$; hence $I_{1}=0$.

$$I_{2}:=-\displaystyle\int_{\partial(\Sigma\cap B)}\partial_{\nu}(-\Delta w)\bar{v}\,dS.$$
By \eqref{condition_v}, $v=0$ on $l_{1}$; by \eqref{results}, $\partial_{\nu}(\Delta w)=0$ on $l_{2}\cup l_{3}$; hence $I_{2}=0$.

$$I_{3}:=\displaystyle\int_{\partial(\Sigma\cap B)}(-\Delta w)\overline{\partial_{\nu}v}\,dS.$$
By definition, $\Delta w=0$ on $l_{1}\cup l_{2}$; by \eqref{results}, $\Delta w=0$ on $l_{3}$; hence $I_{3}=0$.

$$I_{4}:=-\displaystyle\int_{\partial(\Sigma\cap B)}\partial_{\nu}w\overline{(-\Delta v)}\,dS.$$
By \eqref{condition_v}, $\Delta v=0$ on $l_{1}$; by \eqref{results}, $\partial_{\nu}w=0$ on $l_{2}\cup l_{3}$; hence $I_{4}=0$.

$$I_{5}:=\displaystyle\int_{\partial(\Sigma\cap B)}w\overline{(\partial_{\nu}(-\Delta v))}\,dS.$$
By definition, $w=0$ on $l_{1}\cup l_{2}$; by \eqref{results}, $w=0$ on $l_{3}$; hence $I_{5}=0$.

Putting these together, from \eqref{identity1} we obtain
\begin{equation} \label{identity2}
\displaystyle\int_{\Sigma\cap B}((A^{(1)}-A^{(2)})\cdot Du_{1})\bar{v}\,dx+\int_{\Sigma\cap B}(q^{(1)}-q^{(2)})u_{1}\bar{v}\,dx=0.
\end{equation}
for all $u_{1}\in \mathcal{W}(\Sigma)$ and $v\in \mathcal{V}_{l_{1}}(\Sigma\cap B)$. Here for $j=1,2,$ we define some function spaces for later use:
$$
\begin{array}{rl} \vspace{1ex}
\mathcal{W}(\Sigma):=&\{u\in H^{4}(\Sigma):\mathcal{L}_{A^{(1)},q^{(1)}}u=0 \textrm{ in } \Sigma, u|_{\Gamma_{2}}=\Delta u|_{\Gamma_{2}}=0,\\
 & \textrm{ supp}(u|_{\Gamma_{1}})\subset\gamma_{1}, \textrm{ supp}(\Delta u|_{\Gamma_{1}})\subset\gamma_{1}\}.\\
\end{array}
$$
$$
\mathcal{V}_{l_{j}}(\Sigma\cap B):=\{v\in H^{4}(\Sigma\cap B):\mathcal{L}^{\ast}_{A^{(2)},q^{(2)}}v=0 \textrm{ in } \Sigma\cap B, v|_{l_{j}}=\Delta v|_{l_{j}}=0\}.
$$
$$
\mathcal{W}_{l_{j}}(\Sigma\cap B):=\{u\in H^{4}(\Sigma\cap B):\mathcal{L}_{A^{(1)},q^{(1)}}u=0 \textrm{ in } \Sigma\cap B, u|_{l_{j}}=\Delta u|_{l_{j}}=0\}.
$$
We would like to replace $u_{1}\in \mathcal{W}(\Sigma)$ in \eqref{identity2} by elements of the space $\mathcal{W}_{l_{2}}(\Sigma\cap B)$. This can be achieved by the following Runge type approximation result.

\begin{prop}\label{runge}
$\mathcal{W}(\Sigma)$ is a dense subspace of $\mathcal{W}_{l_{2}}(\Sigma\cap B)$ in $L^{2}(\Sigma\cap B)$ topology.
\end{prop}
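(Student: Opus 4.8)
The plan is the standard Hahn--Banach duality argument combined with unique continuation. We view $\mathcal{W}(\Sigma)$ inside $\mathcal{W}_{l_{2}}(\Sigma\cap B)$ via restriction: if $u\in\mathcal{W}(\Sigma)$ then $\mathcal{L}_{A^{(1)},q^{(1)}}u=0$ in $\Sigma\cap B$ and $u=\Delta u=0$ on $l_{2}=\Gamma_{2}\cap\overline{B}\subset\Gamma_{2}$. By the Hahn--Banach theorem it suffices to show: if $f\in L^{2}(\Sigma\cap B)$ satisfies $\int_{\Sigma\cap B}u\,\bar f\,dx=0$ for all $u\in\mathcal{W}(\Sigma)$, then $\int_{\Sigma\cap B}v\,\bar f\,dx=0$ for all $v\in\mathcal{W}_{l_{2}}(\Sigma\cap B)$.

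Extend $f$ by zero to $\Sigma$. Using an $L^{2}$-right-hand-side version of the solvability result of Appendix A for the adjoint operator, we produce $\phi\in H^{4}(\Sigma)$ solving $\mathcal{L}^{\ast}_{A^{(1)},q^{(1)}}\phi=f$ in $\Sigma$ with $\phi=\Delta\phi=0$ on $\Gamma_{1}\cup\Gamma_{2}$. For $u\in\mathcal{W}(\Sigma)$ we apply Green's formula \eqref{green} to $u$ and $\phi$ over $\Sigma$ (justified by a cut-off/exhaustion argument using the decay at spatial infinity of $H^{4}(\Sigma)$ functions that are bi-harmonic outside $B$). The interior contribution equals $-\int_{\Sigma\cap B}u\,\bar f\,dx=0$; on $\partial\Sigma$ every term carrying $\bar\phi$ or $\overline{\Delta\phi}$ drops, and the surviving terms carry $u$ or $\Delta u$ and hence are supported on $\gamma_{1}$ (since $u|_{\Gamma_{2}}=\Delta u|_{\Gamma_{2}}=0$ and $\textrm{supp}(u|_{\Gamma_{1}}),\textrm{supp}(\Delta u|_{\Gamma_{1}})\subset\gamma_{1}$). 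Since the pair $(u|_{\Gamma_{1}},\Delta u|_{\Gamma_{1}})$ runs over all admissible data supported in $\gamma_{1}$ (by the solvability of \eqref{Dirichlet1}), and the two components are independent, this forces $\partial_{\nu}\phi=\partial_{\nu}(\Delta\phi)=0$ on $\gamma_{1}$. Combined with $\phi=\Delta\phi=0$ on $\Gamma_{1}$, the full Cauchy data of $\phi$ vanish on $\gamma_{1}$, hence on $l_{1}$.

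Next, since $A^{(1)},q^{(1)}$ are supported in $B$ and $f=0$ off $\Sigma\cap B$, we have $\Delta^{2}\phi=0$ in $\Sigma\setminus\overline{B}$, which is connected because $n-1\geq 2$. On the nonempty open subset $\gamma_{1}\setminus\overline{B}$ of its boundary the Cauchy data of $\phi$ vanish, so extending $\phi$ by zero across this piece and invoking unique continuation for $\Delta^{2}$ gives $\phi\equiv 0$ in $\Sigma\setminus\overline{B}$; in particular $\phi=\partial_{\nu}\phi=\Delta\phi=\partial_{\nu}(\Delta\phi)=0$ on $l_{3}=\Sigma\cap\partial B$. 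Finally, for $v\in\mathcal{W}_{l_{2}}(\Sigma\cap B)$ we apply \eqref{green} to $v$ and $\phi$ over $\Sigma\cap B$: the interior contribution is $-\int_{\Sigma\cap B}v\,\bar f\,dx$, and each boundary term over $\partial(\Sigma\cap B)=l_{1}\cup l_{2}\cup l_{3}$ vanishes --- on $l_{1}$ and $l_{3}$ because the full Cauchy data of $\phi$ are zero there, and on $l_{2}$ because the terms carrying $v$ or $\Delta v$ vanish ($v=\Delta v=0$ on $l_{2}$) while the two remaining terms carry $\bar\phi$ or $\overline{\Delta\phi}$, which vanish on $\Gamma_{2}\supset l_{2}$. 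This bookkeeping mirrors the computation of $I_{1},\dots,I_{5}$ above. Hence $\int_{\Sigma\cap B}v\,\bar f\,dx=0$, as desired.

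The main obstacle is not this duality bookkeeping but the two analytic inputs it rests on: establishing $L^{2}$-solvability of the homogeneous-Navier adjoint problem on the unbounded slab (an adaptation of Appendix A), and justifying the application of Green's formula on $\Sigma$, i.e.\ controlling the behaviour of $\phi$ and of the solutions in $\mathcal{W}(\Sigma)$ at spatial infinity.
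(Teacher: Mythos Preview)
Your proposal is correct and follows essentially the same route as the paper: Hahn--Banach duality, solve the adjoint Navier problem on the slab with the orthogonality datum as right-hand side, use Green's formula on $\Sigma$ to force $\partial_{\nu}\phi=\partial_{\nu}(\Delta\phi)=0$ on $\gamma_{1}$, apply unique continuation in $\Sigma\setminus\overline{B}$ to kill $\phi$ there, and finish with Green's formula on $\Sigma\cap B$. The two analytic inputs you flag are precisely what the paper supplies in Appendices A and B, and your single unique-continuation step (using that the \emph{full} Cauchy data of $\phi$ vanish on $\gamma_{1}\setminus\overline{B}$) is in fact a cleaner packaging of the paper's two-step argument for $U$ and $\Delta U$ separately.
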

\begin{proof}
It suffices to establish the following fact: for any $g\in L^{2}(\Sigma\cap B)$ such that
$$\displaystyle\int_{\Sigma\cap B} u\overline{g}\,dx=0 \quad\quad \forall u\in \mathcal{W}(\Sigma),$$
we have
$$\displaystyle\int_{\Sigma\cap B} v\overline{g}\,dx=0 \quad\quad \forall v\in \mathcal{W}_{l_{2}}(\Sigma\cap B).$$
To prove this fact, we extend $g$ by zero to $\Sigma\backslash\Sigma\cap B$. Let $U\in H^{4}(\Sigma)$ be the solution of the problem
$$
\begin{array}{rl} \vspace{1ex}
\mathcal{L}^{\ast}_{A^{(1)},q^{(1)}}U=g & \quad \textrm{ in } \Sigma\\ \vspace{1ex}
U=\Delta U=0 & \quad \textrm{ on } \Gamma_{1}\cup\Gamma_{2}.\\
\end{array}
$$
For any $u\in \mathcal{W}(\Sigma)$, Green's formula in the infinite slab $\Sigma$ (see appendix B) gives
$$
0=\displaystyle\int_{\Sigma}u\overline{g}\,dx=\int_{\Sigma}u\overline{\left(\mathcal{L}^{\ast}_{A^{(1)},q^{(1)}}U\right)}\,dx
=\int_{\Gamma_{1}}\overline{\partial_{\nu}U}\Delta u\,dS+\int_{\Gamma_{1}}\overline{\partial_{\nu}\Delta U} u\,dS.
$$
Since $u|_{\Gamma_{1}}$ and $\Delta u|_{\Gamma_{1}}$ can be arbitrary smooth functions supported in $\gamma_{1}$, we conclude that $\partial_{\nu}U|_{\gamma_{1}}=\partial_{\nu}\Delta U|_{\gamma_{1}}=0$. Hence $U$ satisfies $\Delta^{2} U=0$ in $\Sigma\backslash B$, and moreover, $U=\partial_{\nu}U=0$ on $\gamma_{1}\backslash l_{1}$. Thus, by unique continuation, $U=0$ in $\Sigma\backslash B$, and we have $U=\partial_{\nu} U=0$ on $l_{3}$. Similarly $\Delta U$ satisfies $\Delta(\Delta U)=0$ in $\Sigma\backslash B$ and $\Delta U=\partial_{\nu}\Delta U=0$ on $\gamma_{1}\backslash l_{1}$. Again by unique continuation, $\Delta U=0$ in $\Sigma\backslash B$, and we have $\Delta U=\partial_{\nu}\Delta U=0$ on $l_{3}$.

For any $v\in \mathcal{W}_{l_{2}}(\Sigma\cap B)$, using Green's formula on the bounded domain $\Sigma\cap B$ we get
$$
\begin{array}{rl}\vspace{1ex}
\displaystyle\int_{\Sigma\cap B}v\overline{g}\,dx= & \displaystyle\int_{\Sigma\cap B}v\overline{(\mathcal{L}^{\ast}_{A^{(1)},q^{(1)}}U)}\,dx \\ \vspace{1ex}
=& \displaystyle\int_{\Sigma\cap B}\left(\mathcal{L}_{A^{(1)},q^{(1)}}v\right)\overline{U}\,dx+i\int_{\partial(\Sigma\cap B)}\nu(x)\cdot Av \overline{U}\,dS\\ \vspace{1ex}
& +\displaystyle\int_{\partial(\Sigma\cap B)}\partial_{\nu}(-\Delta v)\overline{U}\,dS-\displaystyle\int_{\partial(\Sigma\cap B)}(-\Delta v)\overline{\partial_{\nu}U}\,dS\\ \vspace{1ex}
& +\displaystyle\int_{\partial(\Sigma\cap B)}\partial_{\nu}v\overline{(-\Delta U)}\,dS-\displaystyle\int_{\partial(\Sigma\cap B)}v\overline{(\partial_{\nu}(-\Delta U))}\,dS\\ \vspace{1ex}
=& 0.\\
\end{array}
$$
\end{proof}


Combining \eqref{identity2} with Proposition \ref{runge} we conclude
\begin{prop}
\begin{equation} \label{identity3}
\displaystyle\int_{\Sigma\cap B}((A^{(1)}-A^{(2)})\cdot Du_{1})\bar{v}\,dx+\int_{\Sigma\cap B}(q^{(1)}-q^{(2)})u_{1}\bar{v}\,dx=0.
\end{equation}
for all $u_{1}\in \mathcal{W}_{l_{2}}(\Sigma\cap B)$ and $v\in \mathcal{V}_{l_{1}}(\Sigma\cap B)$.
\end{prop}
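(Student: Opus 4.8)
The plan is to upgrade the class of test functions in \eqref{identity2} from $\mathcal{W}(\Sigma)$ to the larger space $\mathcal{W}_{l_{2}}(\Sigma\cap B)$ using the $L^{2}$-density furnished by Proposition \ref{runge}. The single obstruction to a direct limiting argument is the derivative $Du_{1}$ in the first integral of \eqref{identity2}: a sequence $u_{1}^{(k)}\to u_{1}$ in $L^{2}(\Sigma\cap B)$ carries no control on its gradients. I would dispose of this by an integration by parts that transfers the derivative from $u_{1}$ onto the coefficient $A^{(1)}-A^{(2)}$ and onto $v$.

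Fix $v\in\mathcal{V}_{l_{1}}(\Sigma\cap B)$. Since $u_{1}\in H^{4}(\Sigma\cap B)\subset H^{1}$, $v\in H^{4}(\Sigma\cap B)$, and $A^{(1)}-A^{(2)}\in W^{1,\infty}$, the vector field $(A^{(1)}-A^{(2)})\bar v$ lies in $H^{1}(\Sigma\cap B)$, and the divergence theorem on the Lipschitz domain $\Sigma\cap B$ (with boundary $l_{1}\cup l_{2}\cup l_{3}$) gives
\[
\int_{\Sigma\cap B}\bigl((A^{(1)}-A^{(2)})\cdot Du_{1}\bigr)\bar v\,dx
= -\frac{1}{i}\int_{\Sigma\cap B}u_{1}\Bigl[\bigl(\nabla\cdot(A^{(1)}-A^{(2)})\bigr)\bar v+(A^{(1)}-A^{(2)})\cdot\nabla\bar v\Bigr]dx
+\frac{1}{i}\int_{\partial(\Sigma\cap B)}\bigl(\nu\cdot(A^{(1)}-A^{(2)})\bigr)u_{1}\bar v\,dS.
\]
The boundary integral vanishes: on $l_{3}=\Sigma\cap\partial B$ one has $A^{(1)}=A^{(2)}$ because $B$ contains the supports of the potentials; on $l_{1}$ one has $\bar v=0$ since $v\in\mathcal{V}_{l_{1}}(\Sigma\cap B)$; and on $l_{2}$ one has $u_{1}=0$, which is true both when $u_{1}\in\mathcal{W}_{l_{2}}(\Sigma\cap B)$ and when $u_{1}\in\mathcal{W}(\Sigma)$, the latter vanishing on all of $\Gamma_{2}\supset l_{2}$. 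Hence, for every such $u_{1}$ and every $v\in\mathcal{V}_{l_{1}}(\Sigma\cap B)$, the left-hand side of \eqref{identity2} equals $\int_{\Sigma\cap B}u_{1}\,\overline{G_{v}}\,dx$ with
\[
\overline{G_{v}}:=-\frac{1}{i}\Bigl[\bigl(\nabla\cdot(A^{(1)}-A^{(2)})\bigr)\bar v+(A^{(1)}-A^{(2)})\cdot\nabla\bar v\Bigr]+(q^{(1)}-q^{(2)})\bar v,
\]
and $G_{v}\in L^{2}(\Sigma\cap B)$ since $A^{(j)}\in W^{1,\infty}$, $q^{(j)}\in L^{\infty}$, $v\in H^{1}$.

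With this rewriting, \eqref{identity2} says exactly that $\int_{\Sigma\cap B}u\,\overline{G_{v}}\,dx=0$ for all $u\in\mathcal{W}(\Sigma)$. Proposition \ref{runge} — in the orthogonality form established in its proof, or equivalently by $L^{2}$-density together with the continuity of the functional $u\mapsto\int_{\Sigma\cap B}u\,\overline{G_{v}}\,dx$ on $L^{2}(\Sigma\cap B)$ — then yields $\int_{\Sigma\cap B}u_{1}\,\overline{G_{v}}\,dx=0$ for all $u_{1}\in\mathcal{W}_{l_{2}}(\Sigma\cap B)$. Undoing the integration by parts, which is legitimate for the same regularity reasons and whose boundary term again vanishes because $\bar v=0$ on $l_{1}$, $u_{1}=0$ on $l_{2}$ and $A^{(1)}=A^{(2)}$ on $l_{3}$, converts this identity back into \eqref{identity3}, now valid for all $u_{1}\in\mathcal{W}_{l_{2}}(\Sigma\cap B)$ and all $v\in\mathcal{V}_{l_{1}}(\Sigma\cap B)$.

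I expect the only point demanding care to be the vanishing of the boundary term produced by the integration by parts; this is where the homogeneous boundary conditions defining the three function spaces — the trace of $v$ on $l_{1}$, the trace of $u_{1}$ on $l_{2}$, and the coincidence $A^{(1)}=A^{(2)}$ on $l_{3}$ — are used in concert. Everything else is a routine density and continuity argument once the derivative has been removed from $u_{1}$.
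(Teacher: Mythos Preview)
Your argument is correct. The paper's own proof of this proposition is a single sentence --- ``Combining \eqref{identity2} with Proposition \ref{runge} we conclude \ldots'' --- and does not spell out how to pass to the $L^{2}$-limit through the term involving $Du_{1}$; your integration-by-parts step is precisely the natural way to fill that gap, and your check that the boundary contribution vanishes (using $v|_{l_{1}}=0$, $u_{1}|_{l_{2}}=0$, and $A^{(1)}=A^{(2)}$ on $l_{3}$ since the compact supports lie inside the open ball $B$) is sound.
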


\section{Construction of CGO solutions in the infinite slab}

In this section we construct CGO solutions $u_{1}\in \mathcal{W}_{l_{2}}(\Sigma\cap B)$ and $v\in \mathcal{V}_{l_{1}}(\Sigma\cap B)$. Let $\xi,\mu^{(1)},\mu^{(2)}\in\mathbb{R}^{n}$ be such that $|\mu^{(1)}|=|\mu^{(2)}|=1$ and $\mu^{(1)}\cdot\mu^{(2)}=\mu^{(1)}\cdot\xi=\mu^{(2)}\cdot\xi=0$. We set
\begin{equation}
\zeta_{1}:=\displaystyle\frac{ih\xi}{2}+i\sqrt{1-h^{2}\frac{|\xi|^{2}}{4}}\mu^{(1)}+\mu^{(2)}, \quad \zeta_{2}:=-\displaystyle\frac{ih\xi}{2}+i\sqrt{1-h^{2}\frac{|\xi|^{2}}{4}}\mu^{(1)}-\mu^{(2)}.
\end{equation}
Note that $\zeta_{1}\cdot\zeta_{1}=\zeta_{2}\cdot\zeta_{2}=0$, and $(\zeta_{1}+\overline{\zeta_{2}})/h=i\xi$. Here $h>0$ is a small semiclassical parameter. Note also that
\begin{equation}\label{zeta}
\zeta_{1}=i\mu^{(1)}+\mu^{(2)}+\mathcal{O}(h) \textrm{ and } \zeta_{2}=i\mu^{(1)}-\mu^{(2)}+\mathcal{O}(h) \textrm{ as } h\rightarrow 0,
\end{equation}
so $\zeta^{(0)}_{1}=i\mu^{(1)}+\mu^{(2)}$, $\zeta^{(0)}_{2}=i\mu^{(1)}-\mu^{(2)}$.

We first construct $u_{1}\in \mathcal{W}_{l_{2}}(\Sigma\cap B)$. To satisfy the condition $u_{1}|_{l_{2}}=\Delta u_{1}|_{l_{2}}=0$, we reflect $\Sigma\cap B$ with respect to the plane $x_{n}=0$ and denote this reflection by $(\Sigma\cap B)^{\ast}_{0}:=\{(x',-x_{n}):x=(x',x_{n})\in\Sigma\cap B\}$ where $x'=(x_{1},\cdots,x_{n-1})$. We extend the coefficients $A^{(1)}$ and $q^{(1)}$ to $(\Sigma\cap B)^{\ast}_{0}$ as follows: for the components $A^{(1)}_{j},j=1,\cdots,n-1$ and $q^{(1)}$, we extend them as even functions with respect to $x_{n}=0$, for $A^{(1)}_{n}$ we extend it as an odd function with respect to $x_{n}=0$, i.e. we set
$$
\begin{array}{rl} \vspace{1ex}
\tilde{A}^{(1)}_{j}(x)=&
\left\{
\begin{array}{ll} \vspace{1ex}
A^{(1)}_{j}(x',x_{n}) & 0<x_{n}<L \\
A^{(1)}_{j}(x',-x_{n}) & -L<x_{n}<0 \\
\end{array}
\right.
, \quad j=1,\cdots,n-1 \\ \vspace{1ex}

\tilde{A}^{(1)}_{n}(x)=&
\left\{
\begin{array}{ll} \vspace{1ex}
A^{(1)}_{n}(x',x_{n}) & 0<x_{n}<L \\
-A^{(1)}_{n}(x',-x_{n}) & -L<x_{n}<0 \\
\end{array}
\right. \\ \vspace{1ex}

\tilde{q}^{(1)}(x)=&
\left\{
\begin{array}{ll} \vspace{1ex}
q^{(1)}(x',x_{n}) & 0<x_{n}<L \\
q^{(1)}(x',-x_{n}) & -L<x_{n}<0 \\
\end{array}
\right. .\\
\end{array}
$$

For the moment, let us assume $A^{(1)}_{n}|_{x_{n}=0}=0$ so that $\tilde{A}^{(1)}\in W^{1,\infty}((\Sigma\cap B)\cup(\Sigma\cap B)^{\ast}_{0})$ and $\tilde{q}^{(1)}\in L^{\infty}((\Sigma\cap B)\cup(\Sigma\cap B)^{\ast}_{0})$. We will come back to the general case after establishing Proposition \ref{curlvanish}.

Proposition \ref{existence} implies that there exist CGO solutions of the form
$$\tilde{u}_{1}(x,\zeta_{1},h)=e^{x\cdot\zeta_{1}/h}(a_{1}(x,\zeta^{(0)}_{1})+r_{1}(x,\zeta_{1},h))\in H^{2}((\Sigma\cap B)\cup(\Sigma\cap B)^{\ast}_{0})$$
which satisfy the equation $\mathcal{L}_{\tilde{A}^{(1)},\tilde{q}^{(1)}}\tilde{u}_{1}=0$ in the bounded region $(\Sigma\cap B)\cup(\Sigma\cap B)^{\ast}_{0}$ with
\begin{equation}
((i\mu^{(1)}+\mu^{(2)})\cdot\nabla)^{2}a_{1}=0 \quad \textrm{ in } (\Sigma\cap B)\cup(\Sigma\cap B)^{\ast}_{0},
\end{equation}
\begin{equation}
\|r_{1}\|_{H^{1}_{\textrm{scl}}((\Sigma\cap B)\cup(\Sigma\cap B)^{\ast}_{0})}=\mathcal{O}(h).
\end{equation}
Let
\begin{equation} \label{CGO_u}
u_{1}(x):=\tilde{u}_{1}(x',x_{n})-\tilde{u}_{1}(x',-x_{n}) \quad\quad x\in\Sigma\cap B.
\end{equation}
Then it is easy to check that $u_{1}\in \mathcal{W}_{l_{2}}(\Sigma\cap B)$.\\

To construct $v\in \mathcal{V}_{l_{1}}(\Sigma\cap B)$, we notice that $\mathcal{L}^{\ast}_{A,q}=\mathcal{L}_{\overline{A},i^{-1}\nabla\cdot\overline{A}+\overline{q}}$, so $\mathcal{L}^{\ast}_{A^{(2)},q^{(2)}}v=0$ is equivalent to $\mathcal{L}_{A^{(3)},q^{(3)}}v=0$ where $A^{(3)}:=\overline{A^{(2)}}$ and $q^{(3)}:=i^{-1}\nabla\cdot\overline{A^{(2)}}+\overline{q^{(2)}}$. In the following we will construct $v$ such that $\mathcal{L}_{A^{(3)},q^{(3)}}v=0$ with $v|_{l_{1}}=\Delta v|_{l_{1}}=0$. To this end, we reflect $\Sigma\cap B$ with respect to the plane $x_{n}=L$ and denote this reflection by $(\Sigma\cap B)^{\ast}_{L}:=\{(x',-x_{n}+2L):x=(x',x_{n})\in\Sigma\cap B\}$. We extend the coefficients $A^{(3)}$ and $q^{(3)}$ to $(\Sigma\cap B)^{\ast}_{L}$ as follows: for $A^{(3)}_{j},j=1,\cdots,n-1$ and $q^{(3)}$ we extend them as even functions with respect to $x_{n}=L$, for $A^{(3)}_{n}$ we extend it as an odd function with respect to $x_{n}=L$, i.e.
$$
\begin{array}{rl} \vspace{1ex}
\tilde{A}^{(3)}_{j}(x)=&
\left\{
\begin{array}{ll} \vspace{1ex}
A^{(3)}_{j}(x',x_{n}) & 0<x_{n}<L \\
A^{(3)}_{j}(x',-x_{n}+2L) & L<x_{n}<2L \\
\end{array}
\right.
, \quad j=1,\cdots,n-1 \\ \vspace{1ex}

\tilde{A}^{(3)}_{n}(x)=&
\left\{
\begin{array}{ll} \vspace{1ex}
A^{(3)}_{n}(x',x_{n}) & 0<x_{n}<L \\
-A^{(3)}_{n}(x',-x_{n}+2L) & L<x_{n}<2L \\
\end{array}
\right. \\ \vspace{1ex}

\tilde{q}^{(3)}(x)=&
\left\{
\begin{array}{ll} \vspace{1ex}
q^{(3)}(x',x_{n}) & 0<x_{n}<L \\
q^{(3)}(x',-x_{n}+2L) & L<x_{n}<2L \\
\end{array}
\right. .\\
\end{array}
$$

Again, first we assume $A^{(2)}_{n}|_{x_{n}=L}=0$ so that $A^{(3)}_{n}|_{x_{n}=L}=0$, $\tilde{A}^{(3)}\in W^{1,\infty}((\Sigma\cap B)\cup(\Sigma\cap B)^{\ast}_{L})$ and $\tilde{q}^{(3)}\in L^{\infty}((\Sigma\cap B)\cup(\Sigma\cap B)^{\ast}_{L})$. The general case will be dealt with below Proposition \ref{curlvanish}.

Proposition \ref{existence} implies that there exist CGO solutions of the form
$$\tilde{v}(x,\zeta_{2},h)=e^{x\cdot\zeta_{2}/h}(a_{2}(x,\zeta^{(0)}_{2})+r_{2}(x,\zeta_{2},h))\in H^{2}((\Sigma\cap B)\cup(\Sigma\cap B)^{\ast}_{L})$$
which satisfy the equation $\mathcal{L}_{\tilde{A}^{(3)},\tilde{q}^{(3)}}\tilde{v}=0$ in the bounded region $(\Sigma\cap B)\cup(\Sigma\cap B)^{\ast}_{L}$ with
$$
((i\mu^{(1)}-\mu^{(2)})\cdot\nabla)^{2}a_{2}=0 \quad \textrm{ in } (\Sigma\cap B)\cup(\Sigma\cap B)^{\ast}_{L},
$$
$$
\|r_{2}\|_{H^{1}_{\textrm{scl}}((\Sigma\cap B)\cup(\Sigma\cap B)^{\ast}_{L})}=\mathcal{O}(h).
$$
Let
\begin{equation}\label{CGO_v}
v(x):=\tilde{v}(x',x_{n})-\tilde{v}(x',-x_{n}+2L) \quad\quad x\in\Sigma\cap B.
\end{equation}
Then it is easy to check that $v\in \mathcal{V}_{l_{1}}(\Sigma\cap B)$.

We write down the CGO solutions \eqref{CGO_u} and \eqref{CGO_v} explicitly for future references:
\begin{equation}\label{CGO1}
u_{1}(x)=e^{x\cdot\zeta_{1}/h}(a_{1}(x)+r_{1}(x))-e^{(x',-x_{n})\cdot\zeta_{1}/h}(a_{1}(x',-x_{n})+r_{1}(x',-x_{n}))
\end{equation}
\begin{equation}\label{CGO2}
v(x)=e^{x\cdot\zeta_{2}/h}(a_{2}(x)+r_{2}(x))-e^{(x',-x_{n}+2L)\cdot\zeta_{2}/h}(a_{2}(x',-x_{n}+2L)+r_{2}(x',-x_{n}+2L))
\end{equation}
where $a_{1}\in C^{\infty}(\overline{(\Sigma\cap B)\cup(\Sigma\cap B)^{\ast}_{0}})$ ,$a_{2}\in C^{\infty}(\overline{(\Sigma\cap B)\cup(\Sigma\cap B)^{\ast}_{L}})$ and
\begin{equation}\label{condition1}
((i\mu^{(1)}+\mu^{(2)})\cdot\nabla)^{2} a_{1}=0 \quad \textrm{ in } (\Sigma\cap B)\cup(\Sigma\cap B)^{\ast}_{0},
\end{equation}
\begin{equation}\label{condition2}
((i\mu^{(1)}-\mu^{(2)})\cdot\nabla)^{2} a_{2}=0 \quad \textrm{ in } (\Sigma\cap B)\cup(\Sigma\cap B)^{\ast}_{L},
\end{equation}
\begin{equation}\label{condition3}
\|r_{1}\|_{H^{1}_{\textrm{scl}}((\Sigma\cap B)\cup(\Sigma\cap B)^{\ast}_{0})}=\mathcal{O}(h),
\end{equation}
\begin{equation}\label{condition4}
\|r_{2}\|_{H^{1}_{\textrm{scl}}((\Sigma\cap B)\cup(\Sigma\cap B)^{\ast}_{L})}=\mathcal{O}(h).
\end{equation}

\section{Proof of Theorem 1.1}

We are ready to prove our first main theorem. We will substitute the CGO solutions constructed in last section into \eqref{identity3}. To this end we compute
\begin{equation}\label{exp1}
\begin{array}{rl} \vspace{1ex}
e^{x\cdot\zeta_{1}/h}e^{x\cdot\overline{\zeta}_{2}/h}=&e^{ix\cdot\xi} \\ \vspace{1ex}
e^{(x',-x_{n})\cdot\zeta_{1}/h}e^{x\cdot\overline{\zeta}_{2}/h}=&e^{-2\mu^{(2)}_{n}x_{n}/h+ib_{1}} \\ \vspace{1ex}
e^{x\cdot\zeta_{1}/h}e^{(x',-x_{n}+2L)\cdot\overline{\zeta}_{2}/h}=&e^{2\mu^{(2)}_{n}(x_{n}-L)/h+ib_{2}} \\ \vspace{1ex}
e^{(x',-x_{n})\cdot\zeta_{1}/h}e^{(x',-x_{n}+2L)\cdot\overline{\zeta}_{2}/h}=&e^{-2L\mu^{(2)}_{n}/h+ib_{3}} \\
\end{array}
\end{equation}
where $b_{1}, b_{2}, b_{3}\in\mathbb{R}^{n}$ are defined by
$$
\begin{array}{rl} \vspace{1ex}
b_{1}:=&x'\cdot\xi'-\displaystyle\frac{2}{h}\sqrt{1-h^{2}\frac{|\xi|^{2}}{4}}\mu^{(1)}_{n}x_{n}, \\ \vspace{1ex}
b_{2}:=&x'\cdot\xi'+\displaystyle\frac{2}{h}\sqrt{1-h^{2}\frac{|\xi|^{2}}{4}}\mu^{(1)}_{n}(x_{n}-L)+L\xi_{n}, \\ \vspace{1ex}
b_{3}:=&x'\cdot\xi'-\displaystyle\frac{2L}{h}\sqrt{1-h^{2}\frac{|\xi|^{2}}{4}}\mu^{(1)}_{n}-x_{n}\xi_{n}+L\xi_{n}. \\
\end{array}
$$
We further assume that $\mu^{(2)}_{n}>0$, hence for $0<x_{n}<L$ the following pointwise convergence holds as $h\rightarrow 0+$:
\begin{equation}\label{vanish}
\begin{array}{rl} \vspace{1ex}
|e^{(x',-x_{n})\cdot\zeta_{1}/h}e^{x\cdot\overline{\zeta}_{2}/h}|&\rightarrow 0 \textrm{ as } h\rightarrow 0+,\\ \vspace{1ex}
|e^{x\cdot\zeta_{1}/h}e^{(x',-x_{n}+2L)\cdot\overline{\zeta}_{2}/h}|&\rightarrow 0 \textrm{ as } h\rightarrow 0+,\\ \vspace{1ex}
|e^{(x',-x_{n})\cdot\zeta_{1}/h}e^{(x',-x_{n}+2L)\cdot\overline{\zeta}_{2}/h}|&\rightarrow 0 \textrm{ as } h\rightarrow 0+.\\
\end{array}
\end{equation}
Therefore, with the CGO solutions $u_{1}$ and $v$ given by \eqref{CGO1} and \eqref{CGO2}, we conclude from \eqref{condition3} \eqref{condition4} and \eqref{vanish} that
\begin{equation}\label{term2}
h\displaystyle\int_{\Sigma\cap B}(q^{(1)}-q^{(2)})u_{1}\bar{v}\,dx\rightarrow 0 \quad \textrm{ as } h\rightarrow 0+.
\end{equation}

On the other hand, denote $\zeta^{\ast}_{j}=(\zeta'_{j},-(\zeta_{j})_{n})$ for $\zeta_{j}=(\zeta'_{j},(\zeta_{j})_{n})$, $j=1,2$. Using \eqref{CGO1} we compute
\begin{equation}
\begin{array}{rl} \vspace{1ex}
Du_{1}(x)=&-\displaystyle\frac{i\zeta_{1}}{h}e^{x\cdot\zeta_{1}/h}(a_{1}(x)+r_{1}(x))+e^{x\cdot\zeta_{1}/h}(Da_{1}(x)+Dr_{1}(x))\\ \vspace{1ex}
&+\displaystyle\frac{i\zeta^{\ast}_{1}}{h}e^{(x',-x_{n})\cdot\zeta_{1}/h}(a_{1}(x',-x_{n})+r_{1}(x',-x_{n})) \\ \vspace{1ex}
&-e^{(x',-x_{n})\cdot\zeta_{1}/h}(Da_{1}(x',-x_{n})+Dr_{1}(x',-x_{n})).\\
\end{array}
\end{equation}
Therefore, with the CGO solutions $u_{1}$ and $v$ given by \eqref{CGO1} and \eqref{CGO2}, we have from \eqref{zeta} \eqref{condition3} \eqref{condition4} \eqref{vanish} and the dominant convergence theorem that
\begin{equation}\label{term1}
\begin{array}{rl}\vspace{1ex}
 &h\displaystyle\int_{\Sigma\cap B}(A^{(1)}-A^{(2)})\cdot Du_{1}\bar{v}\,dx \\ \vspace{1ex}
 \rightarrow& (\mu^{(1)}-i\mu^{(2)})\cdot\displaystyle\int_{\Sigma\cap B}(A^{(1)}-A^{(2)}) e^{ix\cdot\xi}a_{1}\overline{a_{2}}\,dx \quad \textrm{ as } h\rightarrow 0+.\\
\end{array}
\end{equation}
Multiplying \eqref{identity3} by $h$ and letting $h\rightarrow 0+$ for the constructed solutions $u_{1}$ and $v$, we obtain from \eqref{term2} and \eqref{term1} that
\begin{equation}\label{identity_A1}
(\mu^{(1)}-i\mu^{(2)})\cdot\displaystyle\int_{\Sigma\cap B}(A^{(1)}-A^{(2)}) e^{ix\cdot\xi}a_{1}\overline{a_{2}}\,dx=0.
\end{equation}
This identity holds for all $a_{1}$ satisfying \eqref{condition1}, $a_{2}$ satisfying \eqref{condition2}, and for all $\mu^{(1)},\mu^{(2)},\xi\in\mathbb{R}^{n}$ such that $\mu^{(1)}\cdot\mu^{(2)}=\mu^{(1)}\cdot\xi=\mu^{(2)}\cdot\xi=0$ and $\mu^{(2)}_{n}>0$. Replace $\mu^{(1)}$ by $-\mu^{(1)}$ and subtract to find
\begin{equation}\label{identity_A2}
\mu^{(1)}\cdot\displaystyle\int_{\Sigma\cap B}(A^{(1)}-A^{(2)}) e^{ix\cdot\xi}a_{1}\overline{a_{2}}\,dx=0.
\end{equation}

\begin{prop} \label{curlvanish}
\begin{equation} \label{curl1}
\partial_{j}(A^{(1)}_{k}-A^{(2)}_{k})-\partial_{k}(A^{(1)}_{j}-A^{(2)}_{j})=0 \textrm{ in } \Sigma\cap B, \quad 1\leq j,k\leq n.
\end{equation}
\end{prop}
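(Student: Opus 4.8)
The plan is to substitute constant amplitudes into \eqref{identity_A2} and then recover the vanishing of the curl from the resulting support condition on the Fourier transform of $A^{(1)}-A^{(2)}$. I keep in force the normalization $A^{(1)}_n|_{x_n=0}=A^{(2)}_n|_{x_n=L}=0$ under which \eqref{identity_A2} was obtained (the general case being reduced to this afterwards). Set $A:=A^{(1)}-A^{(2)}$, extended by zero to $\mathbb{R}^n$; since $A\in L^{\infty}$ has compact support it lies in $L^{1}(\mathbb{R}^n)$, so its Fourier transform $\mathcal{F}A(\xi):=\int_{\mathbb{R}^n}A(x)\,e^{ix\cdot\xi}\,dx$ is continuous on $\mathbb{R}^n$. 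Taking $a_1\equiv a_2\equiv 1$ in \eqref{identity_A2} — these satisfy \eqref{condition1} and \eqref{condition2} — turns it into
$$\mu^{(1)}\cdot\mathcal{F}A(\xi)=0$$
for all $\mu^{(1)},\mu^{(2)},\xi\in\mathbb{R}^n$ with $|\mu^{(1)}|=|\mu^{(2)}|=1$, $\mu^{(1)}\cdot\mu^{(2)}=\mu^{(1)}\cdot\xi=\mu^{(2)}\cdot\xi=0$ and $\mu^{(2)}_n>0$.

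Now fix $\xi\in\mathbb{R}^n$ with $\xi\neq 0$ and $\xi$ not parallel to $e_n$ (the $n$th standard basis vector), and let $\mu$ be a unit vector in $\xi^{\perp}$. The subspace $\{\mu,\xi\}^{\perp}$ has dimension $n-2\geq 1$, and it is contained in $\{x_n=0\}$ only if $e_n\in\mathrm{span}\{\mu,\xi\}$, which — since $\xi\not\parallel e_n$ — happens for just two antipodal choices of $\mu$. For any other unit $\mu\in\xi^{\perp}$ we may pick a unit vector $\mu^{(2)}\in\{\mu,\xi\}^{\perp}$ with $\mu^{(2)}_n>0$, so that $(\mu,\mu^{(2)},\xi)$ is admissible and the displayed identity gives $\mathcal{F}A(\xi)\cdot\mu=0$. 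The unit sphere of $\xi^{\perp}$ with at most two points deleted still spans the $(n-1)$-dimensional space $\xi^{\perp}$ (recall $n\geq 3$), hence $\mathcal{F}A(\xi)\perp\xi^{\perp}$, i.e. $\mathcal{F}A(\xi)\in\mathbb{R}\,\xi$. Therefore
$$\xi_j\,\mathcal{F}A_k(\xi)-\xi_k\,\mathcal{F}A_j(\xi)=0,\qquad 1\leq j,k\leq n,$$
for all $\xi$ in the dense open set $\{\xi\neq 0:\ \xi\not\parallel e_n\}$; since $\mathcal{F}A$ is continuous the same identity holds for every $\xi\in\mathbb{R}^n$. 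As the left-hand side equals $i\,\mathcal{F}\big(\partial_j A_k-\partial_k A_j\big)(\xi)$, Fourier inversion together with the compact support of $A$ yields $\partial_j A_k-\partial_k A_j=0$ as distributions on $\mathbb{R}^n$, in particular in $\Sigma\cap B$, and since $A\in W^{1,\infty}$ this holds a.e.; this is \eqref{curl1}.

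All the analytic ingredients — the Carleman estimate, the solvability result, the integral identity \eqref{identity3}, and the reflected CGO solutions — are already available, so the proof itself is short. The one place calling for care is the constraint $\mu^{(2)}_n>0$, which is forced on us by the reflection construction (it is precisely what makes the cross terms in \eqref{vanish} decay as $h\to 0+$): it prevents a direct choice of $\mu^{(2)}$ when $\xi$ is parallel to $e_n$, and for the remaining directions it removes two admissible values of $\mu$. I expect checking that $\mathcal{F}A(\xi)\in\mathbb{R}\xi$ nonetheless propagates to every $\xi$ — via the span argument together with continuity of $\mathcal{F}A$ — to be the main, though quite mild, obstacle; once this is in place, \eqref{curl1} (equivalently, that the $1$-form $A$ is closed) follows at once.
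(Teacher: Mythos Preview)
Your proof is correct and follows the same overall strategy as the paper: set $a_1=a_2=1$ in \eqref{identity_A2} to obtain $\mu^{(1)}\cdot\widehat{A}(\xi)=0$, and then deduce $\xi_j\widehat{A}_k-\xi_k\widehat{A}_j=0$ on a dense set of frequencies. The difference is purely tactical in how the side condition $\mu^{(2)}_n>0$ is handled. The paper works by explicit construction: for $\xi$ in the open positive orthant it sets $\mu^{(1)}=-\xi_k e_j+\xi_j e_k$ and writes down a concrete $\mu^{(2)}$ (splitting into two cases according as $n\in\{j,k\}$ or not), then extends to all $\xi$ by analyticity of the Fourier transform of a compactly supported function. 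Your geometric spanning argument---observing that the bad directions $\mu$ in $\xi^{\perp}$ form a single pair of antipodal points, so the admissible $\mu$ still span $\xi^{\perp}$---avoids these case distinctions and needs only continuity of $\widehat{A}$, which is a bit cleaner; the paper's explicit formulae, on the other hand, make the admissible triples completely transparent. One small slip: since $A$ is $\mathbb{C}^n$-valued, the correct conclusion is $\mathcal{F}A(\xi)\in\mathbb{C}\,\xi$ rather than $\mathbb{R}\,\xi$ (apply the real orthogonality to the real and imaginary parts separately); the identity $\xi_j\mathcal{F}A_k-\xi_k\mathcal{F}A_j=0$ follows just the same.
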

\begin{proof}
Obviously $a_{1}=a_{2}=1$ satisfies \eqref{condition1} and \eqref{condition2}. Inserting $a_{1}=a_{2}=1$ in \eqref{identity_A2} we get
\begin{equation}\label{fourier}
\mu^{(1)}\cdot(\widehat{A^{(1)}\chi_{\Sigma\cap B}}(\xi)-\widehat{A^{(2)}\chi_{\Sigma\cap B}}(\xi))=0
\end{equation}
where $\chi_{\Sigma\cap B}$ stands for the characteristic function of the set $\Sigma\cap B$ and $\widehat{A^{(j)}\chi_{\Sigma\cap B}}$ denotes the Fourier transform of $A^{(j)}\chi_{\Sigma\cap B}$.

To show the proposition, it suffices to consider the case when $j\neq k$. Let $e_{1},\cdots,e_{n}$ be the standard orthonormal basis in $\mathbb{R}^{n}$. Let $\xi=(\xi_{1},\cdots,\xi_{n})$ with $\xi_{j}>0, j=1,\cdots,n$. Define
$$\mu^{(1)}=-\xi_{k}e_{j}+\xi_{j}e_{k} \quad\quad 1\leq j,k\leq n, j\neq k.$$
To define $\mu^{(2)}$ we consider two cases: if $j,k$ are such that $1\leq j,k<n$, define
$$\mu^{(2)}=-\xi_{j}\xi_{n}e_{j}-\xi_{k}\xi_{n}e_{k}+(\xi^{2}_{j}+\xi^{2}_{k})e_{n};$$
if $k=n$ and $j$ is such that $1\leq j<n$, define
$$\mu^{(2)}=(-\xi^{2}_{j}-\xi^{2}_{n})e_{l}+\xi_{l}\xi_{j}e_{j}+\xi_{l}\xi_{n}e_{n}$$
with some $l\neq j,n$, which exists since $n\geq 3$. In either case it is easy to check $\mu^{(1)}\cdot\mu^{(2)}=\mu^{(1)}\cdot\xi=\mu^{(2)}\cdot\xi=0$ and $\mu^{(2)}_{n}>0$. For such $\mu^{(1)}$ and $\xi$ we get from \eqref{fourier} that
$$\xi_{j}\cdot(\widehat{A^{(2)}_{k}\chi_{\Sigma\cap B}}(\xi)-\widehat{A^{(1)}_{k}\chi_{\Sigma\cap B}}(\xi))-\xi_{k}\cdot(\widehat{A^{(2)}_{j}\chi_{\Sigma\cap B}}(\xi)-\widehat{A^{(1)}_{j}\chi_{\Sigma\cap B}}(\xi))=0,$$
$1\leq j,k\leq n, j\neq k$ for all $\xi\in\mathbb{R}^{n},\xi_{1}>0,\cdots,\xi_{n}>0$, and thus everywhere by analyticity of the Fourier transform. This completes the proof.
\end{proof}

By Proposition \ref{curlvanish}, we conclude $d A^{(1)}=d A^{(2)}$ in $\Sigma$. As $\Sigma$ is simply connected, there exists a compactly supported $\Phi\in C^{1,1}(\overline{\Sigma})$ such that
$$A^{(1)}-A^{(2)}=\nabla\Phi \quad\quad \textrm{ in } \Sigma.$$
In particular, $\Phi=0$ along $\partial B\cap\Sigma$.

Recall that in the construction of the CGO solutions above, we have assumed that $A^{(1)}_{n}|_{x_{n}=0}=0$ and $A^{(2)}_{n}|_{x_{n}=L}=0$. Now we show why our results are independent of such assumptions. Indeed, for $A^{(1)}$, there exists $\Psi^{(1)}\in C^{1,1}(\overline{\Sigma})$ with compact support such that $\Psi^{(1)}|_{\partial\Sigma}=0$ and $\partial_{\nu}\Psi^{(1)}=-A^{(1)}\cdot\nu$ on $\partial\Sigma$, where as usual $\nu$ is the unit outer normal vector on $\partial\Sigma$. Then $A^{(1)}+\nabla\Psi^{(1)}$ satisfies $(A^{(1)}_{n}+\nabla\Psi^{(1)}_{n})|_{x_{n}=0}=0$. See \cite[Theorem 1.3.3]{H} for the existence of $\Psi^{(1)}$. Similarly, we can find $\Psi^{(2)}\in C^{1,1}(\overline{\Sigma})$ with compact support such that $\Psi^{(2)}|_{\partial\Sigma}=0$ and $\partial_{\nu}\Psi^{(2)}=-A^{(2)}\cdot\nu$ on $\partial\Sigma$. Then $(A^{(2)}_{n}+\nabla\Psi^{(2)}_{n})|_{x_{n}=L}=0$. Therefore, we may replace $A^{(j)}$ by $A^{(j)}+\nabla\Psi^{(j)},j=1,2$ to fulfill the assumption. After the replacement, Proposition \ref{curlvanish} will give $d(A^{(1)}+\nabla\Psi^{(1)})=d(A^{(2)}+\nabla\Psi^{(2)})$ in $\Sigma$. As above we can find a compactly supported function $\Phi'\in C^{1,1}(\overline{\Sigma})$ such that
$$A^{(1)}+\nabla\Psi^{(1)}-A^{(2)}-\nabla\Psi^{(2)}=\nabla\Phi' \quad\quad \textrm{ in } \Sigma.$$
Define $\Phi:=\Phi'-\Psi^{(1)}+\Psi^{(2)}$, then $\Phi\in C^{1,1}(\overline{\Sigma})$ is compactly supported and satisfies $A^{(1)}-A^{(2)}=\nabla\Phi$. In particular $\Phi=0$ on $\partial B\cap\Sigma$. We are back to the same situation.

Next, we establish a proposition which asserts that $\Phi=0$ on $\Gamma_{1}\cup\Gamma_{2}$.
\begin{prop}
$\Phi=0$ along $\partial(\Sigma\cap B)$.
\end{prop}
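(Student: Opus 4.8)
The plan is to substitute the CGO solutions $u_1\in\mathcal{W}_{l_2}(\Sigma\cap B)$ and $v\in\mathcal{V}_{l_1}(\Sigma\cap B)$ of \eqref{CGO1}--\eqref{CGO2}, with constant amplitudes $a_1=a_2=1$, into the integral identity \eqref{identity3}, using $A^{(1)}-A^{(2)}=\nabla\Phi$. The first integral in \eqref{identity3} then becomes $\frac1i\int_{\Sigma\cap B}\nabla\Phi\cdot\nabla u_1\,\bar v\,dx$. Unlike in the proof of Theorem~1.1, where one multiplies by $h$ and keeps only the surviving leading term, here I would leave \eqref{identity3} unmultiplied and isolate its $\mathcal{O}(h^{-1})$ part. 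Note first that $\int_{\Sigma\cap B}(q^{(1)}-q^{(2)})u_1\bar v\,dx=\mathcal{O}(1)$, since $q^{(1)}-q^{(2)}\in L^\infty$ and by \eqref{exp1} the product $u_1\bar v$ is a bounded linear combination of exponentials.

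The core is the analysis of $\frac1i\int_{\Sigma\cap B}\nabla\Phi\cdot\nabla u_1\,\bar v\,dx$. The leading part of $\nabla u_1$ is $\frac1h\big(\zeta_1e^{x\cdot\zeta_1/h}-\zeta^{\ast}_1 e^{(x',-x_n)\cdot\zeta_1/h}\big)$; pairing this and the lower-order remainder with the two pieces of $\bar v$ produces the four exponential products in \eqref{exp1}. From the non-reflected $\times$ non-reflected product $e^{ix\cdot\xi}$ one obtains $\frac{1}{ih}\int_{\Sigma\cap B}(\zeta_1\cdot\nabla\Phi)e^{ix\cdot\xi}\,dx$; integrating by parts (valid as $\Phi\in C^{1,1}$), using $\zeta^{(0)}_1\cdot\xi=0$ so that the resulting volume term is $\mathcal{O}(1)$, and $\Phi|_{l_3}=0$, this equals $\frac{1}{ih}\int_{\partial(\Sigma\cap B)}\Phi\,(\zeta_1\cdot\nu)\,e^{ix\cdot\xi}\,dS+\mathcal{O}(1)$. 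The three remaining products carry the exponential weights $e^{2\mu^{(2)}_n(x_n-L)/h}$, $e^{-2\mu^{(2)}_n x_n/h}$ and $e^{-2L\mu^{(2)}_n/h}$, with $\mu^{(2)}_n>0$; the first two are boundary layers concentrating at $\Gamma_1$ and $\Gamma_2$, and a Laplace-type estimate shows the associated integrals $\frac1h\int_{\Sigma\cap B}(\cdots)\,dx$ are only $\mathcal{O}(1)$, while the last is exponentially small. The correction terms contribute only $\mathcal{O}(1)$ as well: from $\|r_j\|_{H^1_{\textrm{scl}}}=\mathcal{O}(h)$ one has $\|r_j\|_{L^2}=\mathcal{O}(h)$ and $\|\nabla r_j\|_{L^2}=\mathcal{O}(1)$, and on the reflected pieces the $L^2$-norm of the exponential weight is $\mathcal{O}(h^{1/2})$. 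Hence \eqref{identity3} reduces to $\frac{1}{ih}\int_{\partial(\Sigma\cap B)}\Phi\,(\zeta_1\cdot\nu)\,e^{ix\cdot\xi}\,dS=\mathcal{O}(1)$; multiplying by $h$ and letting $h\to0+$ (so $\zeta_1\to\zeta^{(0)}_1$) yields
$$\int_{\partial(\Sigma\cap B)}\Phi\,(\zeta^{(0)}_1\cdot\nu)\,e^{ix\cdot\xi}\,dS=0 .$$

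To conclude, recall $\partial(\Sigma\cap B)=l_1\cup l_2\cup l_3$, that $\Phi=0$ on $l_3$ and $\Phi$ is supported in $B$, and that $\nu=e_n$ on $l_1\subset\{x_n=L\}$ while $\nu=-e_n$ on $l_2\subset\{x_n=0\}$. The displayed identity becomes
$$(\zeta^{(0)}_1)_n\Big(e^{iL\xi_n}\!\int_{\mathbb{R}^{n-1}}\!\Phi(x',L)e^{ix'\cdot\xi'}dx'-\int_{\mathbb{R}^{n-1}}\!\Phi(x',0)e^{ix'\cdot\xi'}dx'\Big)=0 ,$$
and since $(\zeta^{(0)}_1)_n=i\mu^{(1)}_n+\mu^{(2)}_n\neq0$ (recall $\mu^{(2)}_n>0$), the bracket vanishes. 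This holds for every $\xi'\neq0$ and every $\xi_n\in\mathbb{R}$, because for $\xi'\neq0$ one can always choose an orthonormal pair $\mu^{(1)},\mu^{(2)}$ orthogonal to $\xi=(\xi',\xi_n)$ with $\mu^{(2)}_n>0$. Fixing $\xi'$ and varying $\xi_n$: since $L>0$ the first integral must vanish, so $\int_{\mathbb{R}^{n-1}}\Phi(x',L)e^{ix'\cdot\xi'}dx'=0$ for all $\xi'\neq0$, hence for all $\xi'$ by continuity, so $\Phi|_{\Gamma_1}=0$; the second integral then forces $\Phi|_{\Gamma_2}=0$. Together with $\Phi|_{l_3}=0$ this gives $\Phi=0$ on $\partial(\Sigma\cap B)$.

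The step I expect to be the main obstacle is the asymptotic bookkeeping of the second paragraph: verifying that the three reflected boundary-layer products and the correction terms $r_1,r_2$ all contribute only at order $\mathcal{O}(1)$, so that the unique $\mathcal{O}(h^{-1})$ term is precisely the clean boundary integral $\frac{1}{ih}\int_{\partial(\Sigma\cap B)}\Phi\,(\zeta_1\cdot\nu)\,e^{ix\cdot\xi}\,dS$. This rests on the Laplace-type concentration estimates near $x_n=0$ and $x_n=L$ and on the semiclassical bounds for $r_1,r_2$.
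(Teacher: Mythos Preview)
Your proof is correct, but it takes a genuinely different route from the paper's.

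The paper exploits the freedom in the amplitude $a_{1}$: it replaces $a_{1}$ by $g_{1}a_{1}$ with $(i\mu^{(1)}+\mu^{(2)})\cdot\nabla g_{1}=0$, sets $\xi=0$, $a_{1}=a_{2}=1$, and obtains $\int_{T(\Sigma\cap B)}g_{1}\partial_{\bar z}\Phi\,dy=0$ in suitable complex coordinates $z=y_{1}+iy_{2}$. Via Stokes' theorem on each two--dimensional slice $T_{y''}$ this becomes $\int_{\partial T_{y''}}g'_{1}(z)\Phi\,dz=0$ for all holomorphic $g'_{1}$; a complex-analytic argument taken from \cite{FKSU} then shows that $\Phi$ is constant along $\partial T_{y''}$, and varying the slice direction and position forces $\Phi=0$ on $\partial(\Sigma\cap B)$.

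You instead keep $a_{1}=a_{2}=1$, leave $\xi$ free, and isolate the $\mathcal{O}(h^{-1})$ part of \eqref{identity3}: after one integration by parts this is the single boundary integral $\frac{1}{ih}\int_{\partial(\Sigma\cap B)}\Phi\,(\zeta_{1}\cdot\nu)\,e^{ix\cdot\xi}\,dS$. Because $l_{1},l_{2}$ are flat with constant normals $\pm e_{n}$ and $\Phi|_{l_{3}}=0$, the limit $h\to0^{+}$ yields a linear relation between the $(n-1)$--dimensional Fourier transforms of $\Phi|_{\Gamma_{1}}$ and $\Phi|_{\Gamma_{2}}$, which you decouple by varying $\xi_{n}$. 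This is more elementary---pure Fourier analysis, no complex-analytic input---and is tailored to the flat slab geometry; the paper's slicing argument is the more portable one, working equally well on curved boundaries and being the standard device in partial-data problems. The asymptotic bookkeeping you flag as the main obstacle is indeed routine: the boundary-layer estimate $\frac{1}{h}\int_{0}^{L}e^{-c x_{n}/h}\,dx_{n}=\mathcal{O}(1)$ together with $\|r_{j}\|_{L^{2}}=\mathcal{O}(h)$, $\|\nabla r_{j}\|_{L^{2}}=\mathcal{O}(1)$ handles every cross term.
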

\begin{proof}
Notice \eqref{condition1} implies that in the expression \eqref{CGO1}, we may replace $a_{1}$ by $g_{1}a_{1}$ if $g_{1}\in C^{\infty}(\overline{(\Sigma\cap B)\cup(\Sigma\cap B)^{\ast}_{0}\cup(\Sigma\cap B)^{\ast}_{L}})$ satisfies
$$(i\mu^{(1)}+\mu^{(2)})\cdot\nabla g_{1}=0 \quad\quad \textrm{ in } (\Sigma\cap B)\cup(\Sigma\cap B)^{\ast}_{0}\cup(\Sigma\cap B)^{\ast}_{L}.$$
Thus \eqref{identity_A1} becomes
$$(\mu^{(1)}-i\mu^{(2)})\cdot\displaystyle\int_{\Sigma\cap B}\nabla\Phi g_{1}e^{ix\cdot\xi}a_{1}\overline{a_{2}}\,dx=0.$$
Set $\xi=0$, $a_{1}=a_{2}=1$ and multiply by $i$:
\begin{equation}\label{g}
(i\mu^{(1)}+\mu^{(2)})\cdot\displaystyle\int_{\Sigma\cap B}\nabla\Phi g_{1}\,dx=0
\end{equation}
As $\mu^{(1)}\cdot\mu^{(2)}=0$ and $|\mu^{(1)}|=|\mu^{(1)}|=1$, we can make a change of variable so that $(i\mu^{(1)}+\mu^{(2)})\cdot\nabla$ becomes a $\overline{\partial}$-operator as follows. Complete the set $\{\mu^{(2)}$, $\mu^{(1)}\}$ to an orthonormal basis in $\mathbb{R}^{n}$, say $\{\mu^{(2)},\mu^{(1)},\mu^{(3)},\cdots,\mu^{(n)}\}$; introduce new coordinates $y=(y_{1},\cdots,y_{n})\in\mathbb{R}^{n}$ with respect to this orthonormal basis by defining $y_{1}=x\cdot\mu^{(2)},y_{2}=x\cdot\mu^{(1)},y_{j}=x\cdot\mu^{(j)},j=3,\cdots,n$; in other words, we made an orthogonal transformation $T:\mathbb{R}^{n}\rightarrow\mathbb{R}^{n}$, $T(x)=y$. Denote $z=y_{1}+iy_{2}$ and $\partial_{\overline{z}}=\frac{1}{2}(\partial_{y_{1}}+i\partial_{y_{2}})$. Then $(i\mu^{(1)}+\mu^{(2)})\cdot\nabla=2\partial_{\overline{z}}$, and in the new coordinates \eqref{g} becomes
$$\displaystyle\int_{T(\Sigma\cap B)}g_{1}\partial_{\overline{z}}\Phi\,dy=0$$
for all $g_{1}\in C^{\infty}(\overline{(\Sigma\cap B)\cup(\Sigma\cap B)^{\ast}_{0}\cup(\Sigma\cap B)^{\ast}_{L}})$ satisfying $\partial_{\overline{z}}g_{1}=0$. Replacing $\mu^{(1)}$ by $-\mu^{(1)}$, in the same way we can show
$$\displaystyle\int_{T(\Sigma\cap B)}g_{2}\partial_{z}\Phi\,dy=0$$
for all $g_{2}\in C^{\infty}(\overline{(\Sigma\cap B)\cup(\Sigma\cap B)^{\ast}_{L}\cup(\Sigma\cap B)^{\ast}_{0}})$ satisfying $\partial_{z}g_{2}=0$. Taking $g_{j}(y)=g'_{j}(z)\otimes g''_{j}(y''),\; j=1,2, \; y''=(y_{3},\cdots,y_{n})$ and varying $g''_{j}$ yields
$$\displaystyle\int_{T_{y''}}g'_{1}(z)\partial_{\overline{z}}\Phi\,dz\wedge d\overline{z}=0 \quad\quad \displaystyle\int_{T_{y''}}g'_{2}(\overline{z})\partial_{z}\Phi\,dz\wedge d\overline{z}=0$$
Here $T_{y''}$ is the intersection of $T(\Sigma\cap B)$ with the two dimensional plane $\{(y_{1},y_{2},y'')\in\mathbb{R}^{n}:y'' \textrm{ fixed}\}$, $\partial_{\overline{z}}g'_{1}=0$ and $\partial_{z}g'_{2}=0$. Notice that $\partial T_{y''}$ is piecewise smooth. Since
$$d(g'_{1}(z)\Phi\,dz)=g'_{1}(z)\partial_{\overline{z}}\Phi\,d\overline{z}\wedge dz, \quad\quad d(g'_{2}(\overline{z})\Phi\,d\overline{z})=g'_{2}(\overline{z})\partial_{z}\Phi\,dz\wedge d\overline{z},$$
we obtain from Stokes' formula that
$$\displaystyle\int_{\partial T_{y''}}g'_{1}(z)\Phi\,dz=0 \quad\quad \displaystyle\int_{\partial T_{y''}}g'_{2}(\overline{z})\Phi\,d\overline{z}=0.$$
Taking $g'_{2}=\overline{g'_{1}}$ we see that
$$\displaystyle\int_{\partial T_{y''}}g'_{1}(z)\Phi\,dz=0 \quad\quad \displaystyle\int_{\partial T_{y''}}g'_{1}(z)\overline{\Phi}\,dz=0$$
Hence
$$\displaystyle\int_{\partial T_{y''}}g'_{1}(z)\textrm{Re }\Phi\,dz=\displaystyle\int_{\partial T_{y''}}g'_{1}(z)\textrm{Im }\Phi\,dz=0.$$
for all holomorphic functions $g'_{1}\in C^{\infty}(\overline{T_{y''}})$. Arguing as in \cite[Lemma 5.1]{FKSU}, we can find holomorphic functions $F_{1},F_{2}\in C(\overline{T_{y''}})$ such that
$$F_{1}|_{\partial T_{y''}}=\textrm{Re }\Phi|_{\partial T_{y''}} \quad\quad F_{2}|_{\partial T_{y''}}=\textrm{Im }\Phi|_{\partial T_{y''}}.$$
Moreover, $\Delta\textrm{Im }F_{j}=0$ in $T_{y''}$ and $\textrm{Im }F_{j}|_{\partial T_{y''}}=0$. Thus, $F_{j},\; j=1,2,$ are real-valued and thus constant on $T_{y''}$. Therefore, $\Phi$ is constant along $\partial T_{y''}$. In the $x$-coordinate system, we see that the function $\Phi(x)$ is constant on the boundary of the intersection $T^{-1}(\Pi_{y''})\cap (\Sigma\cap B)$ for all $y''\in\mathbb{R}^{n-2}$, where $T^{-1}(\Pi_{y''})$ is defined by
$$T^{-1}(\Pi_{y''}):=\{x=y_{1}\mu^{(2)}+y_{2}\mu^{(1)}+\sum^{n}_{j=3}y_{j}\mu^{(j)}: y_{1},y_{2}\in\mathbb{R},y''=(y_{3},\cdots,y_{n})\}.$$
Setting $\mu^{(1)}=e_{j},\; j=1,\cdots,n-1$ and $\mu^{(2)}=e_{n}$, then varying $y''$ gives that $\Phi$ vanishes on $\partial(\Sigma\cap B)$. This completes the proof.
\end{proof}

To show that $A^{(1)}=A^{(2)}$ consider \eqref{identity_A1} with $a_{2}=1$ and $a_{1}$ satisfying
$$((i\mu^{(1)}+\mu^{(2)})\cdot\nabla) a_{1}=1 \quad \textrm{ in } (\Sigma\cap B)\cup(\Sigma\cap B)^{\ast}_{0}.$$
This choice is possible thanks to \eqref{condition1}. We have from \eqref{identity_A1} that
$$(i\mu^{(1)}+\mu^{(2)})\cdot\displaystyle\int_{\Sigma\cap B}(\nabla\Phi)e^{ix\cdot\xi}a_{1}\,dx=0$$
Integrating by parts and using the fact that $\Phi=0$ along $\partial(\Sigma\cap B)$ and $\mu^{(1)}\cdot\xi=\mu^{(2)}\cdot\xi=0$ we obtain
$$0=\displaystyle\int_{\Sigma\cap B}\Phi(x)e^{ix\cdot\xi}[(i\mu^{(1)}+\mu^{(2)})\cdot\nabla a_{1}]\,dx=\int_{\Sigma\cap B}\Phi(x)e^{ix\cdot\xi}\,dx.$$
This indicates that Fourier transform of the function $\Phi\chi_{\Sigma\cap B}$ vanishes. Thus $\Phi=0$ in $\Sigma\cap B$, and therefore $A^{(1)}=A^{(2)}$.\vspace{1ex}

Inserting $A^{(1)}=A^{(2)}$ in \eqref{identity3} gives
$$\displaystyle\int_{\Sigma\cap B}(q^{(1)}-q^{(2)})u_{1}\bar{v}\,dx=0.$$
Let $u_{1}$ and $v$ be the CGO solutions given by \eqref{CGO1} and \eqref{CGO2}. Taking the limit $h\rightarrow 0+$, from \eqref{condition3} \eqref{condition4} \eqref{vanish} we get
$$\displaystyle\int_{\Sigma\cap B}(q^{(1)}-q^{(2)})e^{ix\cdot\xi}a_{1}\bar{a}_{2}\,dx=0$$
where $a_{1}$ and $a_{2}$ satisfy \eqref{condition1} and \eqref{condition2} respectively. In particular, for $a_{1}=a_{2}=1$ this identity becomes
\begin{equation}\label{identity_q}
\displaystyle\int_{\Sigma\cap B}(q^{(1)}-q^{(2)})e^{ix\cdot\xi}\,dx=0
\end{equation}
for all $\xi$ such that there exist $\mu^{(1)},\mu^{(2)}\in\mathbb{R}^{n}$ such that
$$\mu^{(1)}\cdot\mu^{(2)}=\xi\cdot\mu^{(1)}=\xi\cdot\mu^{(2)}=0, \quad |\mu^{(1)}|=|\mu^{(2)}|=1,\quad \mu^{(2)}_{n}>0.$$

Write $\xi=(\xi',\xi_{n-1},\xi_{n})$ with $\xi'\in\mathbb{R}^{n-2}$. If $\xi_{n-1}\neq 0$, we can choose
$$\mu^{(2)}=\displaystyle\frac{1}{\sqrt{1+\frac{\xi^{2}_{n}}{\xi^{2}_{n-1}}}}(0_{\mathbb{R}^{n-2}},\frac{-\xi_{n}}{\xi_{n-1}},1),$$
which satisfies $\xi\cdot\mu^{(2)}=0$, $|\mu^{(2)}|=1$ and $\mu^{(2)}_{n}>0$. Since $n\geq 3$, we can find a third unit vector $\mu^{(1)}$ so that $\{\mu^{(1)},\mu^{(2)},\xi\}$ are mutually orthogonal. Thus \eqref{identity_q} indicates that $\widehat{q^{(1)}\chi_{\Sigma\cap B}}(\xi)=\widehat{q^{(2)}\chi_{\Sigma\cap B}}(\xi)$ for $\xi$ with $\xi_{n-1}\neq 0$, and therefore for all $\xi\in\mathbb{R}^{n}$ as both Fourier transforms are continuous functions. This completes the proof of Theorem 1.1.
\vspace{3ex}

\section{Proof of Theorem 1.2}
In this section we show Theorem 1.2. First, arguing as in the proof of Theorem 1.1, we derive identity \eqref{identity3} for all $u_{1}\in \mathcal{W}_{l_{2}}(\Sigma\cap B)$ and $v\in \mathcal{V}_{l_{2}}(\Sigma\cap B)$. Next, we construct CGO solutions to be used in the proof of Theorem 1.2. We have constructed $u_{1}\in \mathcal{W}_{l_{2}}(\Sigma\cap B)$ in \eqref{CGO_u}, now we construct $v\in \mathcal{V}_{l_{2}}(\Sigma\cap B)$. As in the construction of $u_{1}$, we will reflect the coefficients with respect to the plane $x_{n}=0$. Recall that we have introduced $A^{(3)}=\overline{A^{(2)}}$ and $q^{(3)}=i^{-1}\nabla\cdot\overline{A^{(2)}}+\overline{q^{(2)}}$ so that $\mathcal{L}^{\ast}_{A^{(2)},q^{(2)}}=\mathcal{L}_{A^{(3)},q^{(3)}}$. For $A^{(3)}_{j}, j=1,\cdots,n-1$ and $q^{(3)}$, we extend them as even functions with respect to $x_{n}=0$; for $A^{(3)}_{n}$, we extend it as an odd function with respect to $x_{n}=0$, i.e. we set
$$
\begin{array}{rl} \vspace{1ex}
\tilde{A}^{(3)}_{j}(x)=&
\left\{
\begin{array}{ll} \vspace{1ex}
A^{(3)}_{j}(x',x_{n}) & 0<x_{n}<L \\
A^{(3)}_{j}(x',-x_{n}) & -L<x_{n}<0 \\
\end{array}
\right.
, \quad j=1,\cdots,n-1 \\ \vspace{1ex}

\tilde{A}^{(3)}_{n}(x)=&
\left\{
\begin{array}{ll} \vspace{1ex}
A^{(3)}_{n}(x',x_{n}) & 0<x_{n}<L \\
-A^{(3)}_{n}(x',-x_{n}) & -L<x_{n}<0 \\
\end{array}
\right. \\ \vspace{1ex}

\tilde{q}^{(3)}(x)=&
\left\{
\begin{array}{ll} \vspace{1ex}
q^{(3)}(x',x_{n}) & 0<x_{n}<L \\
q^{(3)}(x',-x_{n}) & -L<x_{n}<0 \\
\end{array}
\right. .\\
\end{array}
$$

Without loss of generality we assume $A^{(3)}_{n}|_{x_{n}=0}=0$, as we did before. Then $\tilde{A}^{(3)}\in W^{1,\infty}((\Sigma\cap B)\cup(\Sigma\cap B)^{\ast}_{0})$ and $\tilde{q}^{(3)}\in L^{\infty}((\Sigma\cap B)\cup(\Sigma\cap B)^{\ast}_{0})$. Proposition \ref{existence} implies that there exist CGO solutions of the form
$$\tilde{v}(x,\zeta_{2},h)=e^{x\cdot\zeta_{2}/h}(a_{2}(x,\zeta^{(0)}_{2})+r_{2}(x,\zeta_{2},h))\in H^{2}((\Sigma\cap B)\cup(\Sigma\cap B)^{\ast}_{0})$$
which satisfy the equation $\mathcal{L}_{\tilde{A}^{(3)},\tilde{q}^{(3)}}v=0$ in the bounded region $(\Sigma\cap B)\cup(\Sigma\cap B)^{\ast}_{0}$ with
\begin{equation}
((i\mu^{(1)}-\mu^{(2)})\cdot\nabla)^{2}a_{2}=0 \quad \textrm{ in } (\Sigma\cap B)\cup(\Sigma\cap B)^{\ast}_{0},
\end{equation}
\begin{equation}
\|r_{2}\|_{H^{1}_{\textrm{scl}}((\Sigma\cap B)\cup(\Sigma\cap B)^{\ast}_{0})}=\mathcal{O}(h).
\end{equation}
Let
\begin{equation} \label{CGO_v2}
v(x):=\tilde{v}(x',x_{n})-\tilde{v}(x',-x_{n}) \quad\quad x\in\Sigma\cap B.
\end{equation}
Then it is easy to see that $v\in \mathcal{V}_{l_{2}}(\Sigma\cap B)$.\\

It will be convenient to write down the CGO solutions \eqref{CGO_v2} explicitly for future references:
\begin{equation}\label{CGO3}
v(x)=e^{x\cdot\zeta_{2}/h}(a_{2}(x)+r_{2}(x))-e^{(x',-x_{n})\cdot\zeta_{2}/h}(a_{2}(x',-x_{n})+r_{2}(x',-x_{n}))
\end{equation}
where $a_{2}\in C^{\infty}(\overline{(\Sigma\cap B)\cup(\Sigma\cap B)^{\ast}_{0}})$ and
\begin{equation}\label{condition5}
((i\mu^{(1)}-\mu^{(2)})\cdot\nabla)^{2} a_{2}=0 \quad \textrm{ in } (\Sigma\cap B)\cup(\Sigma\cap B)^{\ast}_{0},
\end{equation}
\begin{equation}\label{condition6}
\|r_{2}\|_{H^{1}_{\textrm{scl}}((\Sigma\cap B)\cup(\Sigma\cap B)^{\ast}_{0})}=\mathcal{O}(h).
\end{equation}

We will substitute the solutions \eqref{CGO1} and \eqref{CGO3} into \eqref{identity3}. To this end we compute
\begin{equation}\label{exp2}
\begin{array}{rl} \vspace{1ex}
e^{x\cdot\zeta_{1}/h}e^{x\cdot\overline{\zeta}_{2}/h}=&e^{ix\cdot\xi} \\ \vspace{1ex}
e^{x\cdot\zeta_{1}/h}e^{(x',-x_{n})\cdot\overline{\zeta}_{2}/h}=&e^{ix\cdot\xi_{+}+2\mu^{(2)}_{n}x_{n}/h}\\ \vspace{1ex}
e^{(x',-x_{n})\cdot\zeta_{1}/h}e^{x\cdot\overline{\zeta}_{2}/h}=&e^{ix\cdot\xi_{-}-2\mu^{(2)}_{n}x_{n}/h} \\ \vspace{1ex}
e^{(x',-x_{n})\cdot\zeta_{1}/h}e^{(x',-x_{n})\cdot\overline{\zeta}_{2}/h}=&e^{i(x',-x_{n})\cdot\xi} \\
\end{array}
\end{equation}
where
$$
\xi_{\pm}=\left(\xi',\pm\frac{2}{h}\sqrt{1-h^{2}\frac{|\xi|^{2}}{4}}\mu^{(1)}_{n}\right).
$$
Moreover, we assume $\mu^{(1)}_{n}\neq 0$ and $\mu^{(2)}_{n}=0$, so $\xi_{\pm}\rightarrow\infty$ as $h\rightarrow 0$. Then we have
\begin{equation}\label{vanish2}
\zeta_{1}\cdot\displaystyle\int_{\Sigma\cap B} (A^{(1)}-A^{(2)})e^{x\cdot\zeta_{1}/h}e^{(x',-x_{n})\cdot\overline{\zeta}_{2}/h}a_{1}a_{2}\,dx\rightarrow 0
\end{equation}
as $h\rightarrow 0$ by the Riemann-Lebesgue lemma. Similarly
\begin{equation}\label{vanish3}
\zeta_{1}\cdot\displaystyle\int_{\Sigma\cap B} (A^{(1)}-A^{(2)})e^{(x',-x_{n})\cdot\overline{\zeta}_{1}/h}e^{x\cdot\overline{\zeta}_{2}/h}a_{1}a_{2}\,dx\rightarrow 0
\end{equation}
as $h\rightarrow 0$. Therefore, multiplying \eqref{identity3} by $h$ and taking the limit $h\rightarrow 0$ gives
$$
\begin{array}{rl}\vspace{1ex}
& \left(\mu^{(1)}-i\mu^{(2)}\right)\cdot\displaystyle\int_{\Sigma\cap B}\left(A^{(1)}-A^{(2)}\right)e^{ix\cdot\xi}a_{1}\bar{a}_{2}\,dx\\ \vspace{1ex}
+&\left((\mu^{(1)})'-i(\mu^{(2)})',-(\mu^{(1)}-i\mu^{(2)})\right)\cdot\displaystyle\int_{\Sigma\cap B}\left(A^{(1)}-A^{(2)}\right)e^{i(x',-x_{n})\cdot\xi}\\ \vspace{1ex}
& a_{1}(x',-x_{n})\overline{a_{2}(x',-x_{n})}\,dx\rightarrow 0.
\end{array}
$$
Set $\tilde{A}^{(2)}=\overline{\tilde{A}^{(3)}}$. After a change of variable, this expression becomes
\begin{equation}\label{mu1}
\left(\mu^{(1)}-i\mu^{(2)}\right)\cdot\displaystyle\int_{(\Sigma\cap B)\cup(\Sigma\cap B)^{\ast}_{0}}\left(\tilde{A}^{(1)}-\tilde{A}^{(2)}\right)e^{ix\cdot\xi}a_{1}\overline{a}_{2}\,dx=0
\end{equation}
for all $\xi,\mu^{(1)},\mu^{(2)}\in\mathbb{R}^{n}$ with
\begin{equation}\label{perp}
\mu^{(1)}\cdot\mu^{(2)}=\xi\cdot\mu^{(1)}=\xi\cdot\mu^{(2)}=0,\quad |\mu^{(1)}|=|\mu^{(2)}|=1, \quad \mu^{(2)}_{n}=0, \quad \mu^{(1)}_{n}\neq 0.
\end{equation}
Replacing $\mu^{(1)}$ by $-\mu^{(1)}$ to get
\begin{equation}\label{mu2}
\left(\mu^{(1)}+i\mu^{(2)}\right)\cdot\displaystyle\int_{(\Sigma\cap B)\cup(\Sigma\cap B)^{\ast}_{0}}\left(\tilde{A}^{(1)}-\tilde{A}^{(2)}\right)e^{ix\cdot\xi}a_{1}\overline{a}_{2}\,dx=0.
\end{equation}
Hence, \eqref{mu1} and \eqref{mu2} imply that
\begin{equation}\label{mu3}
\mu\cdot\displaystyle\int_{(\Sigma\cap B)\cup(\Sigma\cap B)^{\ast}_{0}}\left(\tilde{A}^{(1)}-\tilde{A}^{(2)}\right)e^{ix\cdot\xi}a_{1}\overline{a}_{2}\,dx=0.
\end{equation}
for all $\mu\in\textrm{span}\{\mu^{(1)},\mu^{(2)}\}$ and all $\xi\in\mathbb{R}^{n}$ for which \eqref{perp} holds.\\

Next proposition indicates that $d\tilde{A}^{(1)}=d\tilde{A}^{(2)}$ in $(\Sigma\cap B)\cup(\Sigma\cap B)^{\ast}_{0}$.
\begin{prop}\label{curlvanish2}
\begin{equation} \label{curl2}
\partial_{j}(\tilde{A}^{(1)}_{k}-\tilde{A}^{(2)}_{k})-\partial_{k}(\tilde{A}^{(1)}_{j}-\tilde{A}^{(2)}_{j})=0 \textrm{ in } (\Sigma\cap B)\cup(\Sigma\cap B)^{\ast}_{0}, \quad 1\leq j,k\leq n.
\end{equation}
\end{prop}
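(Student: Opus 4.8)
The plan is to establish \eqref{curl2} by a Fourier-analytic argument completely parallel to the proof of Proposition \ref{curlvanish}, the only new feature being that we must exploit the reflection symmetry of the extended coefficients $\tilde A^{(1)},\tilde A^{(2)}$ across the hyperplane $\{x_n=0\}$ in order to compensate for the constraint $\mu^{(1)}_n\neq 0$, $\mu^{(2)}_n=0$ imposed in \eqref{perp}. First I would insert $a_1=a_2=1$ into \eqref{mu3}, which is admissible since the constant function trivially solves the $\overline\partial$-type equations; this gives
\begin{equation}\label{planfourier}
\mu\cdot\bigl(\widehat{\tilde A^{(1)}\chi_{D_0}}(\xi)-\widehat{\tilde A^{(2)}\chi_{D_0}}(\xi)\bigr)=0, \qquad D_0:=(\Sigma\cap B)\cup(\Sigma\cap B)^{\ast}_{0},
\end{equation}
valid for all $\mu\in\textrm{span}\{\mu^{(1)},\mu^{(2)}\}$ and all $\xi,\mu^{(1)},\mu^{(2)}$ satisfying \eqref{perp}. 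Setting $\widetilde W:=\tilde A^{(1)}-\tilde A^{(2)}$ (supported in $D_0$), it then suffices to show that the Fourier transform $\widehat{\widetilde W}(\xi)$ is, for each $\xi$, parallel to $\xi$; this is exactly the vanishing of $d\widetilde W$ in $D_0$.

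The key point is that for $j,k$ with $j\neq k$ I must produce, given a generic $\xi$, admissible vectors $\mu^{(1)},\mu^{(2)}$ with $\mu^{(2)}_n=0$, $\mu^{(1)}_n\neq 0$, together with a vector $\mu\in\textrm{span}\{\mu^{(1)},\mu^{(2)}\}$ whose $(j,k)$-components are proportional to $(-\xi_k,\xi_j)$ and whose remaining components vanish, so that \eqref{planfourier} reads
\begin{equation}\label{plancurl}
\xi_j\,\bigl(\widehat{\widetilde W_k}(\xi)\bigr)-\xi_k\,\bigl(\widehat{\widetilde W_j}(\xi)\bigr)=0.
\end{equation}
When $n\geq 4$ this is routine: one picks $\mu$ itself in the $e_j,e_k$ plane (with both $j,k\neq n$, so $\mu_n=0$) and then chooses $\mu^{(1)},\mu^{(2)}$ appropriately in the remaining $(n-1)$-dimensional orthogonal complement, using $n\geq 4$ to guarantee enough room to place $\mu^{(1)}$ with $\mu^{(1)}_n\neq 0$. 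The genuinely delicate cases are $n=3$ and those pairs $(j,k)$ with $k=n$, since then the desired $\mu$ has nonzero $n$-component, conflicting with $\mu^{(2)}_n=0$ unless $\mu$ has a component along $\mu^{(1)}$; one resolves this by writing $\mu=\alpha\mu^{(1)}+\beta\mu^{(2)}$ with both $\alpha,\beta\neq0$ and solving the resulting linear/quadratic system for $\mu^{(1)},\mu^{(2)},\alpha,\beta$ — this is where the reflection symmetry is used, because the odd extension of the $n$-th component and the even extension of the tangential components guarantee that the identity \eqref{plancurl} obtained for such asymmetric $\mu$ still translates into the curl condition for all index pairs, including those involving $n$. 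One handles the restriction $\xi_l>0$ for all $l$ exactly as in Proposition \ref{curlvanish}: establish \eqref{plancurl} first on the open octant and then extend to all $\xi\in\mathbb{R}^n$ by analyticity of $\widehat{\widetilde W}$, which is entire since $\widetilde W$ is compactly supported.

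I expect the main obstacle to be the bookkeeping in the case $k=n$ (and the low-dimensional case $n=3$): constructing explicit $\mu^{(1)},\mu^{(2)}$ obeying all four constraints in \eqref{perp} while forcing the span to contain a vector with the prescribed two nonzero coordinates requires solving a small system whose solvability hinges on $n\geq 3$ and on $\xi$ not lying in certain lower-dimensional exceptional sets; those exceptional $\xi$ are then absorbed by continuity/analyticity of the Fourier transform. Once \eqref{curl2} is in hand, it reads $d\tilde A^{(1)}=d\tilde A^{(2)}$ in $D_0$, which is the assertion of the proposition and the input needed for the subsequent gauge-fixing step in the proof of Theorem 1.2.
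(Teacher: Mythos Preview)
Your overall plan is correct and coincides with the paper's proof: set $a_1=a_2=1$ in \eqref{mu3} to get the Fourier relation, then for each pair $(j,k)$ exhibit an admissible $\mu\in\textrm{span}\{\mu^{(1)},\mu^{(2)}\}$ of the form $-\xi_k e_j+\xi_j e_k$, and finish by analyticity. Your identification of the awkward cases ($n=3$, and $k=n$) is also accurate.

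The one misconception to flag is your appeal to the reflection symmetry of $\tilde A^{(1)},\tilde A^{(2)}$. No such symmetry is used, or needed, in the paper's argument; once \eqref{mu3} is in hand the proof is purely linear-algebraic. Concretely, the paper proceeds as follows. For $n=3$ one observes that for generic $\xi$ the admissible pair $\mu^{(1)},\mu^{(2)}$ (there is essentially a unique choice, $\mu^{(2)}$ proportional to $(-\xi_2,\xi_1,0)$ and $\mu^{(1)}$ proportional to $\xi\times\mu^{(2)}$) spans the full two-dimensional orthogonal complement of $\xi$; hence \eqref{mu3} says $\widehat{\widetilde W}(\xi)$ is parallel to $\xi$, and then \emph{every} vector $\mu=-\xi_k e_j+\xi_j e_k$ (including $k=3$) automatically satisfies $\mu\cdot\widehat{\widetilde W}(\xi)=0$. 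For $n\ge 4$ one treats the two subcases directly: when $1\le j,k<n$ one takes $\mu=\mu^{(2)}=-\xi_k e_j+\xi_j e_k$ (which already has $\mu^{(2)}_n=0$) and builds $\mu^{(1)}$ in the $e_j,e_k,e_n$ directions; when $k=n$ one takes $\mu=\mu^{(1)}=-\xi_n e_j+\xi_j e_n$ (so $\mu^{(1)}_n=\xi_j\neq 0$) and uses two further indices $l,m\notin\{j,n\}$, available since $n\ge 4$, to build $\mu^{(2)}=-\xi_m e_l+\xi_l e_m$. In particular there is no need for $\alpha,\beta$ both nonzero, and the parity of the extensions plays no role here---it was already absorbed into the derivation of \eqref{mu3}.
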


\begin{proof}
If $n=3$, for any vector $\xi\in\mathbb{R}^{3}$ with $\xi^{2}_{1}+\xi^{2}_{2}>0$, it is easy to see the vectors
$$
\begin{array}{rl}\vspace{1ex}
\mu^{(1)}= & \displaystyle\frac{\tilde{\mu}^{(1)}}{|\tilde{\mu}^{(1)}|}\quad\quad
\tilde{\mu}^{(1)}=(-\xi_{1}\xi_{3},-\xi_{2}\xi_{3},\xi^{2}_{1}+\xi^{2}_{2}), \\ \vspace{1ex}
\mu^{(2)}= & \left(\displaystyle\frac{-\xi_{2}}{\sqrt{\xi^{2}_{1}+\xi^{2}_{2}}},
\displaystyle\frac{\xi_{1}}{\sqrt{\xi^{2}_{1}+\xi^{2}_{2}}},0\right),\\
\end{array}
$$
satisfy \eqref{perp}. Thus, after choosing $a_{1}=a_{2}=1$, \eqref{mu3} gives
\begin{equation}\label{muperp}
\mu\cdot f(\xi)=0 \quad \textrm{ where }f(\xi):=\widehat{\tilde{A}^{(1)}\chi}_{(\Sigma\cap B)\cup(\Sigma\cap B)^{\ast}_{0}}(\xi)-\widehat{\tilde{A}^{(2)}\chi}_{(\Sigma\cap B)\cup(\Sigma\cap B)^{\ast}_{0}}(\xi)
\end{equation}
for all $\mu\in\textrm{span}\{\mu^{(1)},\mu^{(2)}\}$. Here $\chi_{(\Sigma\cap B)\cup(\Sigma\cap B)^{\ast}_{0}}$ stands for the characteristic function of the set $(\Sigma\cap B)\cup(\Sigma\cap B)^{\ast}_{0}$. We decompose $f(\xi)\in\mathbb{R}^{3}$ as
$$f(\xi)=\alpha(\xi)\xi+f_{\perp}(\xi)$$
where Re$\,\alpha(\xi)$, Im$\,\alpha(\xi)$ are real numbers, and Re$\,f_{\perp}(\xi)$, Im$\,f_{\perp}(\xi)$ are orthogonal to $\xi$. As $n=3$, we conclude that Re$\,f_{\perp}(\xi)$, Im$\,f_{\perp}(\xi)\in\textrm{span}\{\mu^{(1)},\mu^{(2)}\}$. It follows from \eqref{muperp} that $f_{\perp}(\xi)=0$ for all $\xi\in\mathbb{R}^{3}$ with $\xi^{2}_{1}+\xi^{2}_{2}>0$. Hence $f(\xi)=\alpha(\xi)\xi$. Choose $\mu=-\xi_{k}e_{j}+\xi_{j}e_{k},1\leq j,k\leq 3, j\neq k$, where $e_{j}$ is the standard orthonormal basis of $\mathbb{R}^{3}$. This choice of $\mu$ satisfies $\mu\cdot f(\xi)=0.$ Therefore,
$$\xi_{j}\cdot(\widehat{\tilde{A}^{(1)}_{k}\chi(\xi)}-\widehat{\tilde{A}^{(2)}_{k}\chi(\xi)})-
\xi_{k}\cdot(\widehat{\tilde{A}^{(1)}_{j}\chi(\xi)}-\widehat{\tilde{A}^{(2)}_{j}\chi(\xi)})=0$$
for all $\xi\in\mathbb{R}^{3}$ with $\xi^{2}_{1}+\xi^{2}_{2}>0$, and hence everywhere by analyticity of the Fourier transform.

If $n\geq 4$, for any vector $\xi=(\xi_{1},\cdots,\xi_{n})\in\mathbb{R}^{n},\xi_{l}\neq 0,l=1,\cdots,n$, define vectors
$$\mu^{(1)}=(-\xi_{j}\xi_{n})e_{j}+(-\xi_{k}\xi_{n})e_{k}+(\xi^{2}_{j}+\xi^{2}_{k})e_{n}, \quad \mu^{(2)}=-\xi_{k}e_{j}+\xi_{j}e_{k}$$
where $1\leq j,k<n, j\neq k.$ It is easy to check that $\mu^{(1)}\cdot\mu^{(2)}=\mu^{(1)}\cdot\xi=\mu^{(2)}\cdot\xi=0,\mu^{(2)}_{n}=0$ and $\mu^{(1)}_{n}\neq 0$. Thus, after choosing $a_{1}=a_{2}=1$ and $\mu=\mu^{(2)}$, \eqref{mu3} implies
\begin{equation}\label{vanish4}
\xi_{j}\cdot(\widehat{\tilde{A}^{(1)}_{k}\chi(\xi)}-\widehat{\tilde{A}^{(2)}_{k}\chi(\xi)})-
\xi_{k}\cdot(\widehat{\tilde{A}^{(1)}_{j}\chi(\xi)}-\widehat{\tilde{A}^{(2)}_{j}\chi(\xi)})=0 \quad 1\leq j,k<n,j\neq k
\end{equation}
for all $\xi\in\mathbb{R}^{n}$ with $\xi_{l}\neq0,l=1,2,\cdots,n$.

Let $\xi=(\xi_{1},\cdots,\xi_{n})\in\mathbb{R}^{n}$ with $\xi_{l}\neq 0,l=1,2,\cdots,n,$ and let $1\leq j<n.$ Choose indices $k$ and $l$ so that the set $\{j,k,l,n\}$ consists of four distinct numbers. Define
$$\mu^{(1)}=-\xi_{n}e_{j}+\xi_{j}e_{n},\quad\quad \mu^{(2)}=-\xi_{k}e_{l}+\xi_{l}e_{k}.$$
Again one can check that $\mu^{(1)}\cdot\mu^{(2)}=\mu^{(1)}\cdot\xi=\mu^{(2)}\cdot\xi=0,\mu^{(2)}_{n}=0$ and $\mu^{(1)}_{n}\neq 0$. After choosing $a_{1}=a_{2}=1$ and $\mu=\mu^{(1)}$, \eqref{mu3} implies
\begin{equation}\label{vanish5}
\xi_{j}\cdot(\widehat{\tilde{A}^{(1)}_{n}\chi(\xi)}-\widehat{\tilde{A}^{(2)}_{n}\chi(\xi)})-
\xi_{n}\cdot(\widehat{\tilde{A}^{(1)}_{j}\chi(\xi)}-\widehat{\tilde{A}^{(2)}_{j}\chi(\xi)})=0 \quad 1\leq j<n
\end{equation}
for all $\xi\in\mathbb{R}^{n}$ with $\xi_{l}\neq0,l=1,2,\cdots,n$. The result in the case $n\geq 4$ then follows from \eqref{vanish4} and \eqref{vanish5}.
\end{proof}\vspace{1ex}

Arguing as in the proof of Theorem 1.1, we can find compactly supported function $\Phi\in C^{1,1}(\overline{\Sigma\cup\Sigma^{\ast}_{0}})$ such that
$$\tilde{A}^{(1)}-\tilde{A}^{(2)}=\nabla\Phi \quad\quad \textrm{ in } (\Sigma\cap B)\cup(\Sigma\cap B)^{\ast}_{0}$$
and $\Phi=0$ on $\partial((\Sigma\cap B)\cup(\Sigma\cap B)^{\ast}_{0})$. In \eqref{mu3}, pick $a_{2}=1$, $a_{1}$ satisfying
$$((i\mu^{(1)}+\mu^{(2)})\cdot\nabla)a_{1}=1 \quad\quad \textrm{ in } (\Sigma\cap B)\cup(\Sigma\cap B)^{\ast}_{0}.$$
and $\mu=i\mu^{(1)}+\mu^{(2)}$. Integrating by parts we obtain
$$
\begin{array}{rl}\vspace{1ex}
0= & (i\mu^{(1)}+\mu^{(2)})\cdot\displaystyle\int_{(\Sigma\cap B)\cup(\Sigma\cap B)^{\ast}_{0}}(\nabla\Phi)e^{ix\cdot\xi}a_{1}\,dx\\ \vspace{1ex}
=& \displaystyle\int_{(\Sigma\cap B)\cup(\Sigma\cap B)^{\ast}_{0}}\Phi(x)e^{ix\cdot\xi}[(i\mu^{(1)}+\mu^{(2)})\cdot\nabla a_{1}]\,dx\\ \vspace{1ex}
=& \displaystyle\int_{(\Sigma\cap B)\cup(\Sigma\cap B)^{\ast}_{0}}\Phi(x)e^{ix\cdot\xi}\,dx.
\end{array}
$$
This implies that $\Phi=0$ in $(\Sigma\cap B)\cup(\Sigma\cap B)^{\ast}_{0}$. Hence $\tilde{A}^{(1)}=\tilde{A}^{(2)}$, and therefore, $A^{(1)}=A^{(2)}$ in $\Sigma\cap B$.

As for electric potentials $q^{(1)}$ and $q^{(2)}$, continuing to argue as in the proof of Theorem 1.1, we arrive at
\begin{equation}\label{vanish6}
\displaystyle\int_{(\Sigma\cap B)\cup(\Sigma\cap B)^{\ast}_{0}}(q^{(1)}-q^{(2)})e^{ix\cdot\xi}\,dx=0
\end{equation}
for all $\mu^{(1)},\mu^{(2)},\xi\in\mathbb{R}^{n}$ satisfying \eqref{perp}. For any vector $\xi\in\mathbb{R}^{n}$ with $\xi^{2}_{n-2}+\xi^{2}_{n-1}>0$, the vectors
$$
\begin{array}{rl}\vspace{1ex}
\mu^{(1)}= & \displaystyle\frac{\tilde{\mu}^{(1)}}{|\tilde{\mu}^{(1)}|}\quad\quad
\tilde{\mu}^{(1)}=\left(0_{\mathbb{R}^{n-3}},\displaystyle\frac{-\xi_{n}\xi_{n-2}}{\sqrt{\xi^{2}_{n-2}+\xi^{2}_{n-1}}},
\displaystyle\frac{-\xi_{n}\xi_{n-1}}{\sqrt{\xi^{2}_{n-2}+\xi^{2}_{n-1}}},\displaystyle\sqrt{\xi^{2}_{n-2}+\xi^{2}_{n-1}}
\right),\\ \vspace{1ex}
\mu^{(2)}= & \left(0_{\mathbb{R}^{n-3}},\displaystyle\frac{-\xi_{n-1}}{\sqrt{\xi^{2}_{n-2}+\xi^{2}_{n-1}}},
\displaystyle\frac{\xi_{n-2}}{\sqrt{\xi^{2}_{n-2}+\xi^{2}_{n-1}}},0\right), \\
\end{array}
$$
satisfy \eqref{mu3}. Thus, \eqref{vanish6} holds for all $\xi\in\mathbb{R}^{n}$ with $\xi^{2}_{n-2}+\xi^{2}_{n-1}>0$. We conclude that \eqref{vanish6} also holds for all $\xi\in\mathbb{R}^{n}$ by the analyticity of the Fourier transform. This completes the proof of Theorem 1.2.\vspace{3ex}

\section{Proof of Theorem 1.3}
In this section we prove Theorem 1.3. Let $\Omega_{1}\subset\subset\Omega$ be a bounded sub-domain with $C^{\infty}$ boundary and be such that $\Omega\backslash\bar{\Omega}_{1}$ is connected and $supp(A^{(1)}-A^{(2)})$ and $supp(q^{(1)}-q^{(2)})$ are contained in $\Omega_{1}$.

Let $u_{1}\in H^{4}(\Omega)$ be the solution to the Dirichlet problem
$$\left\{
\begin{array}{rcll}
\mathcal{L}_{A^{(1)},q^{(1)}}u_{1} &=& 0 & \quad\textrm{ in } \Omega \\
u_{1}&=&f_{1} & \quad\textrm{ on } \partial\Omega \\
\Delta u_{1}&=&f_{2} & \quad\textrm{ on } \partial\Omega \\
\end{array}
\right.$$
with $(f_{1},f_{2})\in H^{\frac{7}{2}}(\partial\Omega)\times H^{\frac{3}{2}}(\partial\Omega)$ and $supp(f_{1})\subset\gamma_{1}, supp(f_{2})\subset\gamma_{1}$. Let $u_{2}\in H^{4}(\Omega)$ be the solution to the Dirichlet problem
$$\left\{
\begin{array}{rcll}
\mathcal{L}_{A^{(2)},q^{(2)}}u_{2} &=& 0 & \quad\textrm{ in } \Omega \\
u_{2}&=&f_{1} & \quad\textrm{ on } \partial\Omega \\
\Delta u_{2}&=&f_{2} & \quad\textrm{ on } \partial\Omega. \\
\end{array}
\right.$$
Setting $w=u_{2}-u_{1}$, then
$$\mathcal{L}_{A^{(2)},q^{(2)}}w=(A^{(1)}-A^{(2)})\cdot Du_{1}+(q^{(1)}-q^{(2)})u_{1} \textrm{ in } \Omega.$$
As two Dirichlet-to-Neumann maps agree on $\gamma_{2}$, we have $\partial_{\nu}w=0$ on $\gamma_{2}$. Therefore, $w$ is a solution of
$$\mathcal{L}_{A^{(2)},q^{(2)}}w=0 \quad \textrm{ in } \Omega\backslash\bar{\Omega}_{1}$$
with $w=\partial_{\nu}w=0$ on $\gamma_{2}$. By unique continuation, we obtain that $w=0$ in $\Omega\backslash\bar{\Omega}_{1}$. Thus, $w=\Delta w=\partial_{\nu}w=\partial_{\nu}\Delta w=0$ on $\partial\Omega_{1}$.

Let $v\in H^{4}(\Omega_{1})$ be a solution of
\begin{equation}\label{equation}
\mathcal{L}^{\ast}_{A^{(2)},q^{(2)}}v=0 \quad\textrm{ in } \Omega_{1}
\end{equation}
Using Green's formula \eqref{green} over $\Omega_{1}$, we have
\begin{equation}\label{difference2}
\displaystyle\int_{\Omega_{1}}((A^{(1)}-A^{(2)})\cdot Du_{1})\bar{v}\,dx+\int_{\Omega_{1}}(q^{(1)}-q^{(2)})u_{1}\bar{v}\,dx=0
\end{equation}
for all $v\in H^{4}(\Omega_{1})$ satisfying \eqref{equation} and for all $u_{1}\in \mathcal{W}(\Omega)$, where
$$\mathcal{W}(\Omega):=\{u\in H^{4}(\Omega):\mathcal{L}_{A^{(1)},q^{(1)}}u=0 \textrm{ in }\Omega, supp(u|_{\partial\Omega})\subset\gamma_{1},supp(\Delta u|_{\partial\Omega})\subset\gamma_{1}\}.$$
Let
$$\widetilde{\mathcal{W}}(\Omega_{1}):=\{u\in H^{4}(\Omega_{1}):\mathcal{L}_{A^{(1)},q^{(1)}}u=0 \textrm{ in }\Omega_{1}\}.$$
Again we need a density result to pass from $\mathcal{W}(\Omega)$ to $\widetilde{\mathcal{W}}(\Omega_{1})$.

\begin{prop}
$\mathcal{W}(\Omega)$ is a dense subspace in $\widetilde{\mathcal{W}}(\Omega_{1})$ in $L^{2}(\Omega_{1})$-topology.
\end{prop}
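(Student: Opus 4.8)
The plan is to prove this Runge-type approximation result by a Hahn-Banach / duality argument combined with the Carleman estimate and unique continuation, exactly in the spirit of Proposition \ref{runge}. First I would reduce the density statement to the following dual claim: if $g\in L^{2}(\Omega_{1})$ satisfies $\int_{\Omega_{1}}u\overline{g}\,dx=0$ for all $u\in\mathcal{W}(\Omega)$, then $\int_{\Omega_{1}}v\overline{g}\,dx=0$ for all $v\in\widetilde{\mathcal{W}}(\Omega_{1})$. Extend $g$ by zero to all of $\Omega$.

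Next I would introduce the solution $U\in H^{4}(\Omega)$ of the adjoint boundary value problem
\begin{equation*}
\mathcal{L}^{\ast}_{A^{(1)},q^{(1)}}U=g \quad\textrm{ in }\Omega, \qquad U|_{\partial\Omega}=\Delta U|_{\partial\Omega}=0,
\end{equation*}
which exists and is unique under assumption \textbf{(A)} (note $0$ is not an eigenvalue of $\mathcal{L}^{\ast}_{A^{(1)},q^{(1)}}$ precisely when it is not an eigenvalue of $\mathcal{L}_{A^{(1)},q^{(1)}}$). For any $u\in\mathcal{W}(\Omega)$, Green's formula \eqref{green} applied to $u$ and $U$ on $\Omega$ reduces, using $U=\Delta U=0$ on $\partial\Omega$ and $\mathcal{L}_{A^{(1)},q^{(1)}}u=0$, to
\begin{equation*}
0=\int_{\Omega}u\overline{g}\,dx=\int_{\partial\Omega}\overline{\partial_{\nu}U}\,\Delta u\,dS+\int_{\partial\Omega}\overline{\partial_{\nu}\Delta U}\,u\,dS.
\end{equation*}
Since $u|_{\partial\Omega}$ and $\Delta u|_{\partial\Omega}$ range over arbitrary functions supported in $\gamma_{1}$, this forces $\partial_{\nu}U=\partial_{\nu}\Delta U=0$ on $\gamma_{1}$. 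Hence on $\Omega\setminus\overline{\Omega_{1}}$ (where $g=0$, the coefficients $A^{(1)},q^{(1)}$ are unrestricted but $\mathcal{L}^{\ast}_{A^{(1)},q^{(1)}}U=0$) the function $U$ has vanishing Cauchy data on the open set $\gamma_{1}$; I would then apply unique continuation for the fourth-order operator — factoring $\Delta^{2}$ and iterating the second-order unique continuation principle, exactly as done in Section 3 — to conclude $U=\partial_{\nu}U=\Delta U=\partial_{\nu}\Delta U=0$ on $\partial\Omega_{1}$, using that $\Omega\setminus\overline{\Omega_{1}}$ is connected and meets $\partial\Omega$.

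Finally, for any $v\in\widetilde{\mathcal{W}}(\Omega_{1})$ I would apply Green's formula \eqref{green} to $v$ and $U$ over $\Omega_{1}$: the bulk term $\int_{\Omega_{1}}(\mathcal{L}_{A^{(1)},q^{(1)}}v)\overline{U}\,dx$ vanishes since $\mathcal{L}_{A^{(1)},q^{(1)}}v=0$ in $\Omega_{1}$, and every boundary term over $\partial\Omega_{1}$ vanishes because $U=\partial_{\nu}U=\Delta U=\partial_{\nu}\Delta U=0$ there. This yields $\int_{\Omega_{1}}v\overline{g}\,dx=\int_{\Omega_{1}}v\,\overline{\mathcal{L}^{\ast}_{A^{(1)},q^{(1)}}U}\,dx=0$, which is the desired dual conclusion; the Hahn-Banach theorem then gives density. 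The main obstacle is the unique continuation step across $\Omega\setminus\overline{\Omega_{1}}$: one must make sure the Cauchy data vanish on a genuine open piece of $\partial\Omega$, propagate this through the connected region up to $\partial\Omega_{1}$, and handle the bi-Laplacian by treating $\Delta U$ as a separate harmonic-type function with its own vanishing Cauchy data (so that both $U$ and $\Delta U$ vanish identically near $\partial\Omega_{1}$). The rest is a routine repetition of the duality bookkeeping already carried out in Proposition \ref{runge}.
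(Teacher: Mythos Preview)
Your proposal follows exactly the same duality/Hahn--Banach route as the paper's proof, and the overall structure is correct. However, one step does not go through as you describe it: the unique continuation argument in $\Omega\setminus\overline{\Omega_{1}}$. In Proposition~\ref{runge} the coefficients $A^{(1)},q^{(1)}$ are compactly supported inside $B$, so outside $B$ the function $U$ is genuinely biharmonic and the ``factor $\Delta^{2}$, apply second-order UCP to $\Delta U$ and then to $U$'' argument works. In the present setting the coefficients do \emph{not} vanish in $\Omega\setminus\overline{\Omega_{1}}$; only the differences $A^{(1)}-A^{(2)}$ and $q^{(1)}-q^{(2)}$ do. Hence there you only have $\mathcal{L}^{\ast}_{A^{(1)},q^{(1)}}U=0$, a fourth-order equation with nonzero first-order part, and $\Delta U$ is \emph{not} a solution of any second-order elliptic equation --- so your factoring step fails.

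The paper's proof sidesteps this by invoking unique continuation directly for the full fourth-order operator $\mathcal{L}^{\ast}_{A^{(1)},q^{(1)}}$, using the full Cauchy data $U=\Delta U=\partial_{\nu}U=\partial_{\nu}\Delta U=0$ on $\gamma_{1}$; this is standard for elliptic operators with bounded lower-order terms (and is also a consequence of the Carleman estimate underlying Proposition~\ref{Carleman}). Once you replace your factoring justification by this direct appeal, the remainder of your argument is identical to the paper's.
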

\begin{proof}
It suffices to establish the following fact: for any $g\in L^{2}(\Omega_{1})$ such that
$$\displaystyle\int_{\Omega_{1}} u\overline{g}\,dx=0 \quad\quad \forall u\in \mathcal{W}(\Omega),$$
we have
$$\displaystyle\int_{\Omega_{1}} v\overline{g}\,dx=0 \quad\quad \forall v\in \widetilde{\mathcal{W}}(\Omega).$$
To this end, extend $g$ by zero to $\Omega\backslash\Omega_{1}$. Let $U\in H^{4}(\Omega)$ be the solution of the Dirichlet problem
$$
\begin{array}{rl} \vspace{1ex}
\mathcal{L}^{\ast}_{A^{(1)},q^{(1)}}U=g & \quad \textrm{ in } \Omega\\ \vspace{1ex}
U=\Delta U=0 & \quad \textrm{ on } \partial\Omega.\\
\end{array}
$$
For any $u\in \mathcal{W}(\Omega)$, Green's formula on bounded domain $\Omega$ gives
$$
\begin{array}{rl}\vspace{1ex}
0=&\displaystyle\int_{\Omega}u\bar{g}\,dx=\displaystyle\int_{\Omega}u\overline{(\mathcal{L}^{\ast}_{A^{(1)},q^{(1)}}U)}\,dx \\ \vspace{1ex}
=&-\displaystyle\int_{\partial\Omega}(-\Delta u)\overline{\partial_{\nu}U}\,dS-\displaystyle\int_{\partial\Omega}u\overline{(\partial_{\nu}(-\Delta U))}\,dS\\
\end{array}
$$
where we have used $U=\Delta U=0$ on $\partial\Omega$. Since $u|_{\gamma_{1}}$ and $\Delta u|_{\gamma_{1}}$ can be arbitrary smooth functions supported in $\gamma_{1}$, we conclude that $\partial_{\nu}U|_{\gamma_{1}}=\partial_{\nu}\Delta U|_{\gamma_{1}}=0$. Hence $U$ satisfies $\mathcal{L}^{\ast}_{A^{(1)},q^{(1)}}U=0$ in $\Omega\backslash\Omega_{1}$, and $U=\Delta U=\partial_{\nu}U=\partial_{\nu}(\Delta U)=0$ on $\gamma_{1}$. By unique continuation, $U=0$ in $\Omega\backslash\Omega_{1}$, and therefore, $U=\Delta U=\partial_{\nu} U=\partial_{\nu}(\Delta U)=0$ on $\partial\Omega_{1}$.

For any $v\in \widetilde{\mathcal{W}}(\Omega_{1})$, using Green's formula over $\Omega_{1}$ we get
$$
\begin{array}{rl}\vspace{1ex}
\displaystyle\int_{\Omega_{1}}v\overline{g}\,dx= & \displaystyle\int_{\Omega_{1}}v\overline{(\mathcal{L}^{\ast}_{A^{(1)},q^{(1)}}U)}\,dx \\ \vspace{1ex}
=& \displaystyle\int_{\Omega_{1}}\left(\mathcal{L}_{A^{(1)},q^{(1)}}v\right)\overline{U}\,dx+i\int_{\partial\Omega_{1}}\nu(x)\cdot A\overline{U}v\,dS\\ \vspace{1ex}
& +\displaystyle\int_{\partial\Omega_{1}}\partial_{\nu}(-\Delta v)\overline{U}\,dS-\displaystyle\int_{\partial\Omega_{1}}(-\Delta v)\overline{\partial_{\nu}U}\,dS\\ \vspace{1ex}
& +\displaystyle\int_{\partial\Omega_{1}}\partial_{\nu}v\overline{(-\Delta U)}\,dS-\displaystyle\int_{\partial\Omega_{1}}v\overline{(\partial_{\nu}(-\Delta U))}\,dS\\ \vspace{1ex}
=& 0.\\
\end{array}
$$
\end{proof}
We conclude from this proposition that \eqref{difference2} holds for all $u\in\widetilde{\mathcal{W}}(\Omega_{1})$ and $v\in H^{4}(\Omega_{1})$ satisfying \eqref{equation}.

Let $B\subset\mathbb{R}^{n}$ be an open ball such that $\Omega_{1}\subset B$. The fact that $A^{(1)}=A^{(2)}$ and $q^{(1)}=q^{(2)}$ on $\partial\Omega_{1}$ allows to extend $A^{(j)}$ and $q^{(j)}$ to $B$ in such a way that the extensions, still denoted by $A^{(j)}$ and $q^{(j)}$, coincide on $B\backslash\Omega_{1}$, have compact supports, and satisfy $A^{(j)}\in W^{1,\infty}(B)$, $q^{(j)}\in L^{\infty}(B)$. It follows from \eqref{difference2} that
$$\displaystyle\int_{B}((A^{(1)}-A^{(2)})\cdot Du_{1})\bar{v}\,dx+\int_{B}(q^{(1)}-q^{(2)})u_{1}\bar{v}\,dx=0$$
for all $u_{1},v\in H^{4}(B)$ which are solutions of
$$\mathcal{L}_{A^{(1)},q^{(1)}}u_{1}=0 \textrm{ in } B \quad\quad\quad \mathcal{L}^{\ast}_{A^{(2)},q^{(2)}}v=0 \textrm{ in } B.$$
Now we are in the same situation as in \cite{KLU2} for the bi-harmonic operator, and as in \cite{KLU3} with full boundary measurements. We can construct complex geometric optics solutions as in Proposition \ref{existence}, and proceed as in \cite{KLU2}, \cite{KLU3} and the proof of Theorem 1.1 to show that $A^{(1)}=A^{(2)}$ and $q^{(1)}=q^{(2)}$ in $\Omega$.

\section{Proof of Theorem 1.4}
In this section we prove Theorem 1.4. First, as in the proof of Theorem 1.1 and Theorem 1.2, after applying Green's formula over $\Omega$, we obtain the integral identity
\begin{equation}\label{difference3}
\displaystyle\int_{\Omega}((A^{(1)}-A^{(2)})\cdot Du_{1})\bar{v}\,dx+\int_{\Omega}(q^{(1)}-q^{(2)})u_{1}\bar{v}\,dx=0
\end{equation}
for all $u_{1},v\in H^{4}(\Omega)$ such that
$$\mathcal{L}_{A^{(1)},q^{(1)}}u_{1}=0 \textrm{ in }\Omega,\quad\quad u_{1}|_{x_{n}=0}=(\Delta u_{1})|_{x_{n}=0}=0;$$
$$\mathcal{L}^{\ast}_{A^{(2)},q^{(2)}}v=0 \textrm{ in }\Omega,\quad\quad v|_{x_{n}=0}=(\Delta v)|_{x_{n}=0}=0.$$
Applying the reflection argument as in the proof of Theorem 1.2, we can construct CGO solutions $u_{1}$ and $v$, as in \eqref{CGO1} and \eqref{CGO3}, to the above equations and with the corresponding boundary conditions. Substituting these solutions $u_{1}$ and $v$ into \eqref{difference3} and proceeding as in the proof of Theorem 1.2 we get
\begin{equation}\label{mu4}
\left(\mu^{(1)}-i\mu^{(2)}\right)\cdot\displaystyle\int_{\Omega\cup\Omega^{\ast}_{0}}\left(\tilde{A}^{(1)}-\tilde{A}^{(2)}\right)e^{ix\cdot\xi}a_{1}\bar{a}_{2}\,dx=0
\end{equation}
for all $\xi,\mu^{(1)},\mu^{(2)}\in\mathbb{R}^{n}$ such that
$$\mu^{(1)}\cdot\mu^{(2)}=\xi\cdot\mu^{(1)}=\xi\cdot\mu^{(2)}=0,\quad |\mu^{(1)}|=|\mu^{(2)}|=1, \quad \mu^{(2)}_{n}=0, \quad \mu^{(1)}_{n}\neq 0,$$
where we have introduced the notation $\Omega^{\ast}_{0}:=\{(x',x_{n})\in\mathbb{R}^{n}:(x',-x_{n})\in\Omega\}$.
Applying the boundary reconstruction result \cite[Proposition 4.1]{KLU2} we conclude that $A^{(1)}=A^{(2)}$ on $\bar{\gamma}$, hence $\tilde{A}^{(1)}=\tilde{A}^{(2)}$ on $\partial(\Omega\cup\Omega^{\ast}_{0})$. This allows us to extend $\tilde{A}^{(j)},\; j=1,2,$ to compactly supported vector fields on a large ball $B$ with $\Omega\cup\Omega^{\ast}_{0}\subset\subset B$ and $\tilde{A}^{(1)}=\tilde{A}^{(2)}$ in $B\backslash\Omega\cup\Omega^{\ast}_{0}$. Then \eqref{mu4} leads to
$$\left(\mu^{(1)}-i\mu^{(2)}\right)\cdot\displaystyle\int_{B}\left(\tilde{A}^{(1)}-\tilde{A}^{(2)}\right)e^{ix\cdot\xi}a_{1}\bar{a}_{2}\,dx=0.$$
From Proposition \ref{curlvanish2} we have $d\tilde{A}^{(1)}=d\tilde{A}^{(2)}$ in $B$. Therefore, there exists $\Phi\in C^{1,1}(\overline{B})$ so that
$$\tilde{A}^{(1)}-\tilde{A}^{(2)}=\nabla\Phi \quad\quad\textrm{ in } B.$$
As before we can show that $\Phi=0$ on $\partial(\Omega\cup\Omega^{\ast}_{0})$; in particular, $\Phi=0$ on $\bar{\gamma}$. Now we are facing the same situation as in the proof of Theorem 1.2. Arguing as there we conclude that $A^{(1)}=A^{(2)}$ and $q^{(1)}=q^{(2)}$. This completes the proof of Theorem 1.4.

\begin{appendices}
\section{Solvability of the forward problem in an infinite slab}

In this appendix we provide the proof of the existence of the forward boundary value problem \eqref{Dirichlet1} for the perturbed bi-harmonic operator in an infinite slab. Recall that the perturbed bi-harmonic operator is of the form
$$\mathcal{L}_{A,q}(x,D):=\Delta^{2}+A(x)\cdot D+q(x).$$
The infinite slab is written as $(n\geq 3)$
$$\Sigma=\{x=(x',x_{n})\in\mathbb{R}^{n}:x'=(x_{1},\dots,x_{n-1})\in\mathbb{R}^{n-1}, 0<x_{n}<L\},\quad L>0.$$
whose boundary hyperplanes are
$$\Gamma_{1}=\{x\in\mathbb{R}^{n}:x_{n}=L\} \quad\quad \Gamma_{2}=\{x\in\mathbb{R}^{n}:x_{n}=0\}.$$
We will rewrite the perturbed bi-harmonic equation as a system of equations. For this purpose, let $u=(u_{1},u_{2})$ with $u_{2}=\Delta u_{1}$, define
$$\mathcal{S}u:=
\Delta \left(
           \begin{array}{c}
             u_{1} \\
             u_{2} \\
           \end{array}
         \right)+
         \left(
           \begin{array}{cc}
             0 & -1 \\
             A\cdot D+q & 0 \\
           \end{array}
         \right)
         \left(
           \begin{array}{c}
             u_{1} \\
             u_{2} \\
           \end{array}
         \right),
$$
then $\mathcal{L}_{A,q}u_{1}=0$ is equivalent to $\mathcal{S}u=0$. We will show the existence of a unique solution to this system with boundary value $(u_{1},u_{2})|_{\partial\Sigma}=(f_{1},f_{2})$.

Poincar\'{e}'s inequality in an infinite slab indicates that the quadratic form
$$u\mapsto \displaystyle\int_{\Sigma}|\nabla u|^{2}\,dx=\displaystyle\int_{\Sigma}(|\nabla u_{1}|^{2}+|\nabla u_{2}|^{2})\,dx$$
is non-negative and densely defined closed on $H^{1}_{0}(\Sigma)\times H^{1}_{0}(\Sigma)$. Associated with this quadratic form, the Laplace operator $-\Delta$ equipped with the domain
$$\mathcal{D}(-\Delta):=\{u\in H^{1}_{0}(\Sigma)\times H^{1}_{0}(\Sigma):\Delta u=(\Delta u_{1},\Delta u_{2})\in L^{2}(\Sigma)\times L^{2}(\Sigma)\}$$
is a non-negative self-adjoint operator on $L^{2}(\Sigma)\times L^{2}(\Sigma)$. Its spectrum is obtained in the following proposition.

\begin{prop}
$\mathcal{D}(-\Delta)=H^{1}_{0}(\Sigma)\cap H^{2}(\Sigma) \times H^{1}_{0}(\Sigma)\cap H^{2}(\Sigma)$. Moreover, the spectrum of $-\Delta$ is purely absolutely continuous and is equal to $[\pi^{2}/L^{2},+\infty)$.
\end{prop}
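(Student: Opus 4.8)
The plan is to diagonalize $-\Delta$ by a partial Fourier transform in the tangential variables $x'$ together with an expansion in the one-dimensional Dirichlet eigenfunctions in $x_{n}$; this single reduction yields the domain characterization, the description of the spectrum, and its absolute continuity all at once. Since $-\Delta$ on $L^{2}(\Sigma)\times L^{2}(\Sigma)$ is the orthogonal direct sum of two identical copies of the scalar Dirichlet Laplacian on $L^{2}(\Sigma)$ with form domain $H^{1}_{0}(\Sigma)$, it suffices to prove the scalar statements and then take Cartesian products.

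First I would fix the orthonormal basis $\phi_{k}(x_{n})=\sqrt{2/L}\,\sin(k\pi x_{n}/L)$, $k\geq 1$, of $L^{2}((0,L))$, consisting of Dirichlet eigenfunctions of $-\partial_{x_{n}}^{2}$ with eigenvalues $\mu_{k}=(k\pi/L)^{2}$. Composing the partial Fourier transform $\mathcal{F}_{x'\to\xi'}$ with expansion in $\{\phi_{k}\}$ defines a unitary operator $\mathcal{U}\colon L^{2}(\Sigma)\to\bigoplus_{k\geq 1}L^{2}(\mathbb{R}^{n-1})$, $u\mapsto(c_{k})_{k\geq 1}$, where $c_{k}(\xi')=\int_{0}^{L}(\mathcal{F}_{x'\to\xi'}u)(\xi',x_{n})\overline{\phi_{k}(x_{n})}\,dx_{n}$. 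Using integration by parts in $x_{n}$ and the Dirichlet condition one checks that, for $u\in C_{c}^{\infty}(\Sigma)$ and then by density,
$$\|u\|_{H^{2}(\Sigma)}^{2}\asymp\sum_{k\geq 1}\int_{\mathbb{R}^{n-1}}\bigl(1+|\xi'|^{2}+\mu_{k}\bigr)^{2}\,|c_{k}(\xi')|^{2}\,d\xi',$$
and that $\mathcal{U}$ conjugates $-\Delta$ to the multiplication operator $(c_{k})\mapsto\bigl((|\xi'|^{2}+\mu_{k})c_{k}\bigr)$. Since $\mu_{k}\geq\mu_{1}=\pi^{2}/L^{2}>0$, the weight $1+|\xi'|^{2}+\mu_{k}$ is comparable to $|\xi'|^{2}+\mu_{k}$ uniformly in $k$ and $\xi'$; hence $u\in\mathcal{D}(-\Delta)$ if and only if $u\in H^{1}_{0}(\Sigma)$ and $\Delta u\in L^{2}(\Sigma)$, if and only if $u\in H^{2}(\Sigma)\cap H^{1}_{0}(\Sigma)$, which is the asserted domain.

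For the spectrum, the multiplication form gives $\sigma(-\Delta)=\overline{\bigcup_{k\geq 1}\{|\xi'|^{2}+\mu_{k}:\xi'\in\mathbb{R}^{n-1}\}}=\overline{\bigcup_{k\geq 1}[\mu_{k},\infty)}=[\pi^{2}/L^{2},\infty)$. Since a countable orthogonal direct sum of operators with purely absolutely continuous spectrum is again purely absolutely continuous, it remains to note that for each fixed $k$ the operator of multiplication by $\xi'\mapsto|\xi'|^{2}+\mu_{k}$ on $L^{2}(\mathbb{R}^{n-1})$ has purely absolutely continuous spectrum: for $g\in L^{2}(\mathbb{R}^{n-1})$ the scalar spectral measure $\langle E(\cdot)g,g\rangle$ is the push-forward of $|g(\xi')|^{2}\,d\xi'$ under $\xi'\mapsto|\xi'|^{2}$, and writing $d\xi'$ in polar coordinates and substituting $t=|\xi'|^{2}$ shows this push-forward has a locally integrable density on $[0,\infty)$ (here $n-1\geq 2$), hence is absolutely continuous with respect to Lebesgue measure.

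The main obstacle is bookkeeping rather than conceptual: one has to verify the norm equivalence above by accounting for all derivatives $\partial_{x'}^{\beta}\partial_{x_{n}}^{j}u$ with $j\leq 2$, using the identities $\|\partial_{x_{n}}u\|_{L^{2}}^{2}=\sum_{k}\mu_{k}\int|c_{k}|^{2}$ and $\|\partial_{x_{n}}^{2}u\|_{L^{2}}^{2}=\sum_{k}\mu_{k}^{2}\int|c_{k}|^{2}$ obtained from integration by parts, and one has to check that the density arguments passing from $C_{c}^{\infty}(\Sigma)$ to $H^{2}(\Sigma)\cap H^{1}_{0}(\Sigma)$ and to $\mathcal{D}(-\Delta)$ remain valid on the unbounded slab. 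An alternative route to the domain statement alone is elliptic regularity up to the boundary of $\Sigma$---whose two boundary hyperplanes are flat, so the regularity is standard and the constants can be taken uniform in the $x'$ directions by translation invariance---but I would carry out the Fourier-analytic argument as the primary one since it delivers all three conclusions simultaneously.
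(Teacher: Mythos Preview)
Your proposal is correct and follows essentially the same strategy as the paper: both arguments separate variables by expanding in the Dirichlet sine basis $\{\sin(k\pi x_{n}/L)\}$ in the transverse direction, reducing $-\Delta$ to the direct sum $\bigoplus_{k}(-\Delta_{x'}+\mu_{k})$, from which the domain identification (via $H^{2}$ estimates on each mode) and the spectral statement follow. The only cosmetic difference is that you go one step further and Fourier transform in $x'$ as well to obtain a pure multiplication operator, whereas the paper keeps $x'$ in physical space and invokes the resolvent of $-\Delta_{x'}+\mu_{k}$ on $\mathbb{R}^{n-1}$ together with interpolation; your fuller diagonalization yields the absolute continuity more explicitly, but the underlying decomposition and the derivative-by-derivative bookkeeping you flag are identical to what the paper carries out.
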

\begin{proof}
Let $F=(F_{1},F_{2})\in L^{2}(\Sigma)\times L^{2}(\Sigma)$, we will consider
$$-\Delta u=F, \quad\quad u\in\mathcal{D}(-\Delta).$$
Taking the Fourier series with respect to the variable $x_{n}\in [0,L]$ we have
\begin{equation}\label{Fourier}
\begin{array}{lr}
u(x',x_{n})=\displaystyle\sum^{\infty}_{l=1}u_{l}(x')\sin\frac{l\pi x_{n}}{L}, &\quad u_{l}(x')=\displaystyle\frac{2}{L}\displaystyle\int^{L}_{0}u(x)\sin\frac{l\pi x_{n}}{L}\,dx_{n};\\
F(x',x_{n})=\displaystyle\sum^{\infty}_{l=1}F_{l}(x')\sin\frac{l\pi x_{n}}{L}, &\quad F_{l}(x')=\displaystyle\frac{2}{L}\displaystyle\int^{L}_{0}F(x)\sin\frac{l\pi x_{n}}{L}\,dx_{n}.\\
\end{array}
\end{equation}
As usual Parseval's identities hold
$$
\begin{array}{rl}
\|u\|_{L^{2}(\Sigma)\times L^{2}(\Sigma)} & =\displaystyle\frac{L}{2}\displaystyle\sum^{\infty}_{l=1}\|u_{l}\|^{2}_{L^{2}(\mathbb{R}^{n-1})\times L^{2}(\mathbb{R}^{n-1})},\\
\|F\|_{L^{2}(\Sigma)\times L^{2}(\Sigma)} & =\displaystyle\frac{L}{2}\displaystyle\sum^{\infty}_{l=1}\|F_{l}\|^{2}_{L^{2}(\mathbb{R}^{n-1})\times L^{2}(\mathbb{R}^{n-1})}.\\
\end{array}
$$
Comparing the Fourier coefficients $u_{l}$ of $u$ and $F_{l}$ of $F$ we see that they are related by
\begin{equation}\label{Fourieridentity}
\left(-\Delta_{x'}+\displaystyle\frac{l^{2}\pi^{2}}{L^{2}}\right)u_{l}(x')=F_{l}(x'), \quad\quad x'\in\mathbb{R}^{n-1}, l=1,2,\dots.
\end{equation}
The operator $-\Delta_{x'}+\frac{l^{2}\pi^{2}}{L^{2}}$ $(l\geq 1)$, when equipped with the domain $H^{2}(\mathbb{R}^{n-1})$, is self-adjoint on $L^{2}(\mathbb{R}^{n-1})$ with purely absolutely continuous spectrum $[l^{2}\pi^{2}/L^{2},+\infty)$. Hence \eqref{Fourieridentity} has the unique solution
$$u_{l}(x')=\left(-\Delta_{x'}+\displaystyle\frac{l^{2}\pi^{2}}{L^{2}}\right)^{-1}F_{l}(x')\in H^{2}(\mathbb{R}^{n-1}),$$
and moreover, it satisfies the norm estimate
\begin{equation}\label{normestimate1}
\begin{array}{rl}\vspace{1ex}
\|u_{l}\|_{L^{2}(\mathbb{R}^{n-1})\times L^{2}(\mathbb{R}^{n-1})} & \leq\displaystyle\frac{L^{2}}{l^{2}\pi^{2}}\|F_{l}\|_{L^{2}(\mathbb{R}^{n-1})\times L^{2}(\mathbb{R}^{n-1})};\\\vspace{1ex}
\|u_{l}\|_{H^{2}(\mathbb{R}^{n-1})\times H^{2}(\mathbb{R}^{n-1})} & \leq C\|F_{l}\|_{L^{2}(\mathbb{R}^{n-1})\times L^{2}(\mathbb{R}^{n-1})}.\\
\end{array}
\end{equation}
Here and in the following we will name all the constants independent of $l$ as C. By interpolation we obtain
\begin{equation}\label{normestimate2}
\|u_{l}\|_{H^{1}(\mathbb{R}^{n-1})\times H^{1}(\mathbb{R}^{n-1})} \leq \displaystyle\frac{C}{l}\|F_{l}\|_{L^{2}(\mathbb{R}^{n-1})\times L^{2}(\mathbb{R}^{n-1})}.
\end{equation}
Parseval's identities and \eqref{normestimate1} then give
$$
\begin{array}{rl}\vspace{1ex}
\|u\|^{2}_{L^{2}(\Sigma)\times L^{2}(\Sigma)} = & \displaystyle\frac{L}{2}\displaystyle\sum^{\infty}_{l=1}\|u_{l}\|^{2}_{L^{2}(\mathbb{R}^{n-1})\times L^{2}(\mathbb{R}^{n-1})}\\\vspace{1ex}
\leq & C \displaystyle\sum^{\infty}_{l=1}\frac{1}{l^{4}}\|F_{l}\|^{2}_{L^{2}(\mathbb{R}^{n-1})\times L^{2}(\mathbb{R}^{n-1})}\leq C \|F\|^{2}_{L^{2}(\Sigma)\times L^{2}(\Sigma)}.\\
\end{array}
$$
To take care of the first order derivatives, we differentiate with respect to $x_{n}$ to get
$$
\begin{array}{rl}\vspace{1ex}
\|\partial_{x_{n}}u\|^{2}_{L^{2}(\Sigma)\times L^{2}(\Sigma)} = &
\|\displaystyle\sum^{\infty}_{l=1}\displaystyle\frac{l\pi}{L}u_{l}(x')\cos\frac{l\pi x_{n}}{L}\|^{2}_{L^{2}(\Sigma)\times L^{2}(\Sigma)}\\\vspace{1ex}
= & \displaystyle\frac{L}{2}\displaystyle\sum^{\infty}_{l=1}\frac{l^{2}\pi^{2}}{L^{2}}\|u_{l}\|^{2}_{L^{2}(\mathbb{R}^{n-1})\times L^{2}(\mathbb{R}^{n-1})}\leq C \|F\|^{2}_{L^{2}(\Sigma)\times L^{2}(\Sigma)}.\\
\end{array}
$$
Using \eqref{normestimate2} we obtain that for $j=1,2,\cdots,n-1$,
$$
\begin{array}{rl}\vspace{1ex}
\|\partial_{x_{j}}u\|^{2}_{L^{2}(\Sigma)\times L^{2}(\Sigma)} = & \displaystyle\frac{L}{2}\displaystyle\sum^{\infty}_{l=1}\|\partial_{x_{j}}u_{l}\|^{2}_{L^{2}(\mathbb{R}^{n-1})\times L^{2}(\mathbb{R}^{n-1})}\\\vspace{1ex}
\leq & C \displaystyle\sum^{\infty}_{l=1}\frac{1}{l^{2}}\|F_{l}\|^{2}_{L^{2}(\mathbb{R}^{n-1})\times L^{2}(\mathbb{R}^{n-1})}\leq C \|F\|^{2}_{L^{2}(\Sigma)\times L^{2}(\Sigma)}.\\
\end{array}
$$
We proceed to estimate the second order derivatives. For $j,k=1,2,\cdots,n-1$, it follows from \eqref{normestimate1} that
$$
\begin{array}{rl}\vspace{1ex}
\|\partial^{2}_{x_{j}x_{k}}u\|^{2}_{L^{2}(\Sigma)\times L^{2}(\Sigma)} = & \displaystyle\frac{L}{2}\displaystyle\sum^{\infty}_{l=1}\|\partial_{x_{j}x_{k}}u_{l}\|^{2}_{L^{2}(\mathbb{R}^{n-1})\times L^{2}(\mathbb{R}^{n-1})}\leq C \|F\|^{2}_{L^{2}(\Sigma)\times L^{2}(\Sigma)};\\ \vspace{1ex}
\|\partial^{2}_{x_{j}x_{n}}u\|^{2}_{L^{2}(\Sigma)\times L^{2}(\Sigma)} = & \displaystyle\frac{L}{2}\displaystyle\sum^{\infty}_{l=1}\frac{l^{2}\pi^{2}}{L^{2}}\|\partial_{x_{j}}u_{l}\|^{2}_{L^{2}(\mathbb{R}^{n-1})\times L^{2}(\mathbb{R}^{n-1})}\leq C \|F\|^{2}_{L^{2}(\Sigma)\times L^{2}(\Sigma)};\\ \vspace{1ex}
\|\partial^{2}_{x_{n}}u\|^{2}_{L^{2}(\Sigma)\times L^{2}(\Sigma)} = & \displaystyle\frac{L}{2}\displaystyle\sum^{\infty}_{l=1}\frac{l^{4}\pi^{4}}{L^{4}}\|u_{l}\|^{2}_{L^{2}(\mathbb{R}^{n-1})\times L^{2}(\mathbb{R}^{n-1})}\leq C \|F\|^{2}_{L^{2}(\Sigma)\times L^{2}(\Sigma)}.\\
\end{array}
$$
These estimates show that $u\in H^{2}(\Sigma)\times H^{2}(\Sigma)$. The statement concerning the spectrum of $-\Delta$ follows from the fact that
$$-\Delta=\bigoplus^{\infty}_{l=1}\left(-\Delta_{x'}+\displaystyle\frac{l^{2}\pi^{2}}{L^{2}}\right).$$
This completes the proof of the proposition.
\end{proof}

\begin{prop}
Let $A\in W^{1,\infty}({\Sigma};\mathbb{C}^{n})\cap\mathcal{E}'(\bar{\Sigma};\mathbb{C}^{n})$, $q\in L^{\infty}(\Sigma;\mathbb{C})\cap\mathcal{E}'(\bar{\Sigma};\mathbb{C}^{n}).$ Then the operator $\mathcal{S}$, equipped with the domain $H^{1}_{0}(\Sigma)\cap H^{2}(\Sigma) \times H^{1}_{0}(\Sigma)\cap H^{2}(\Sigma)$, is closed and its essential spectrum is equal to $[\pi^{2}/L^{2},+\infty)$.
\end{prop}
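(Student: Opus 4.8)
The plan is to split $\mathcal{S}=\mathcal{S}_{0}+P$ on the common domain $\mathcal{D}:=\bigl(H^{1}_{0}(\Sigma)\cap H^{2}(\Sigma)\bigr)^{2}$, where $\mathcal{S}_{0}$ is $\mathcal{S}$ with the potentials removed and $P$ collects them:
$$
\mathcal{S}_{0}\begin{pmatrix}u_{1}\\ u_{2}\end{pmatrix}:=\Delta\begin{pmatrix}u_{1}\\ u_{2}\end{pmatrix}+\begin{pmatrix}0&-1\\ 0&0\end{pmatrix}\begin{pmatrix}u_{1}\\ u_{2}\end{pmatrix},\qquad P\begin{pmatrix}u_{1}\\ u_{2}\end{pmatrix}:=\begin{pmatrix}0\\ (A\cdot D+q)u_{1}\end{pmatrix}.
$$
The free operator $\mathcal{S}_{0}$ differs from the diagonal Laplacian analyzed in the preceding Proposition only by the bounded constant nilpotent matrix $N:=\left(\begin{smallmatrix}0&-1\\ 0&0\end{smallmatrix}\right)$, so its spectrum is explicit; and, because $A,q\in\mathcal{E}'(\bar{\Sigma})$, the genuinely perturbative part $P$ is compact relative to $\mathcal{S}_{0}$. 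Closedness of $\mathcal{S}$ and invariance of the essential spectrum then both follow from classical perturbation theory.

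First I would analyze $\mathcal{S}_{0}=\Delta I_{2}+N$. Since $\Delta I_{2}$ is closed on $\mathcal{D}(-\Delta)=\mathcal{D}$ (preceding Proposition) and $N$ is bounded, $\mathcal{S}_{0}$ is closed on $\mathcal{D}$. Expanding in the Fourier sine series in $x_{n}$ and then Fourier transforming in $x'$, exactly as in the preceding Proposition, conjugates $\mathcal{S}_{0}$ unitarily to $\bigoplus_{l\ge 1}M_{l}$, where each $M_{l}$ is multiplication on $L^{2}(\mathbb{R}^{n-1})^{2}$ by the Jordan block $\lambda_{l}(\xi')I_{2}+N$, with $\lambda_{l}(\xi')$ the symbol of the $l$-th Fourier mode of the diagonal Laplacian. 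As $N$ is nilpotent, $\sigma\bigl(\lambda_{l}(\xi')I_{2}+N\bigr)=\{\lambda_{l}(\xi')\}$, so the fibre resolvent $(M_{l}-z)^{-1}$ is bounded precisely when $z\notin\overline{\mathrm{range}\,\lambda_{l}}$; and since the level sets of $\lambda_{l}$ have measure zero, $M_{l}$ has no eigenvalues and $\sigma(M_{l})=\sigma_{\mathrm{ess}}(M_{l})=\overline{\mathrm{range}\,\lambda_{l}}$. Assembling the fibres, $\mathcal{S}_{0}$ has empty point spectrum, its resolvent set is the connected complement of a half-line, and $\sigma(\mathcal{S}_{0})=\sigma_{\mathrm{ess}}(\mathcal{S}_{0})$ coincides with the spectrum of the Laplacian on $\Sigma$ determined in the preceding Proposition, namely $[\pi^{2}/L^{2},+\infty)$. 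Because this spectrum is purely continuous, fills a half-line, and has connected complement, the several notions of essential spectrum for a non-self-adjoint operator agree on it.

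Next I would prove that $P$ is compact relative to $\mathcal{S}_{0}$. Fix a bounded subdomain $K$ of $\Sigma$ containing $\mathrm{supp}\,A\cup\mathrm{supp}\,q$ and a cutoff $\chi\in C^{\infty}_{c}(\mathbb{R}^{n})$, with bounded support, that equals $1$ near $\mathrm{supp}\,A\cup\mathrm{supp}\,q$, so that $Pu=P(\chi u)$. If $(u^{(k)})\subset\mathcal{D}$ is bounded in the graph norm of $\mathcal{S}_{0}$, then $\|u^{(k)}\|_{L^{2}(\Sigma)}+\|\Delta u^{(k)}\|_{L^{2}(\Sigma)}$ is bounded (as $N$ is bounded), hence $\|u^{(k)}\|_{H^{2}(\Sigma)}$ is bounded by the elliptic estimate established in the preceding Proposition; thus $(\chi u^{(k)})$ is bounded in $H^{2}(K)$ and, by the Rellich--Kondrachov theorem, a subsequence converges in $H^{1}(K)$. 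Since $A\cdot D+q$ is first order with bounded coefficients supported where $\chi\equiv 1$, the corresponding subsequence of $Pu^{(k)}=\bigl(0,(A\cdot D+q)(\chi u^{(k)}_{1})\bigr)$ converges in $L^{2}(\Sigma)^{2}$. Hence $P$ is $\mathcal{S}_{0}$-compact, and in particular $\mathcal{S}_{0}$-bounded with relative bound $0$. By the standard perturbation theorem (a relatively bounded perturbation of relative bound less than $1$ of a closed operator is closed), $\mathcal{S}=\mathcal{S}_{0}+P$ is closed on $\mathcal{D}$; and by Weyl's theorem on invariance of the essential spectrum under relatively compact perturbations, $\sigma_{\mathrm{ess}}(\mathcal{S})=\sigma_{\mathrm{ess}}(\mathcal{S}_{0})=[\pi^{2}/L^{2},+\infty)$, as asserted.

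The spectral computation for $\mathcal{S}_{0}$ and the closedness bookkeeping are routine. The step that needs care is the $\mathcal{S}_{0}$-compactness of $P$: although $A\cdot D$ is of lower order than the leading part of $\mathcal{S}_{0}$, it is still a genuine first-order operator, so one must exploit the compact support of $A$ and $q$ to localize to the bounded set $K$, where Rellich applies, together with the global $H^{2}$ elliptic estimate for the slab Laplacian from the preceding Proposition. A secondary point is to fix which notion of essential spectrum is meant for this non-self-adjoint system and to verify that the relevant Weyl-type stability statement --- equivalently, constancy of the Fredholm index of $\mathcal{S}_{0}-z$ over its connected resolvent component --- does apply.
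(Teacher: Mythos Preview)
Your argument is correct and follows the same overall strategy as the paper --- show that the lower-order part is a relatively compact perturbation of an explicitly analyzable operator and invoke Weyl's theorem --- but with a genuinely different, and in fact more careful, decomposition. The paper treats the \emph{entire} off-diagonal matrix $\left(\begin{smallmatrix}0&-1\\ A\cdot D+q&0\end{smallmatrix}\right)$ as the perturbation of the diagonal Dirichlet Laplacian, asserting that this matrix composed with $\Delta^{-1}$ is compact on $L^{2}(\Sigma)^{2}$. You instead keep the constant $-1$ entry with the principal part, setting $\mathcal{S}_{0}=\Delta I_{2}+N$ and putting only the compactly supported lower-left block $(A\cdot D+q)$ into $P$. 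This is the safer route: on the unbounded slab $\Delta^{-1}$ is bounded but not compact (the spectrum of $-\Delta$ is purely continuous), so the top-right entry $-\Delta^{-1}$ in the paper's perturbation is \emph{not} compact, and the paper's one-line proof is at best elliptic on this point. Your Jordan-block computation supplies exactly what is missing: since $N$ is nilpotent, $\sigma(\Delta I_{2}+N)=\sigma(\Delta I_{2})$ fibrewise, so $\sigma_{\mathrm{ess}}(\mathcal{S}_{0})=[\pi^{2}/L^{2},\infty)$ without any compactness claim for the $-1$ entry. The remaining step --- $\mathcal{S}_{0}$-compactness of $P$ via the compact support of $A,q$, the slab $H^{2}$-estimate, and Rellich --- is the same in spirit as what the paper intends, and your treatment of the non-self-adjoint Weyl theorem (connected resolvent set, agreement of the various essential spectra) is an appropriate extra bit of care.
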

\begin{proof}
This follows from the fact that
$$
\left(
    \begin{array}{cc}
        0 & -1 \\
        A\cdot D+q & 0 \\
    \end{array}
\right)\Delta^{-1}:L^{2}(\Sigma)\times L^{2}(\Sigma)\rightarrow L^{2}(\Sigma)\times L^{2}(\Sigma)
$$
is a compact operator and that the essential spectrum do not change under relatively compact perturbations.
\end{proof}

This proposition yields the following solvability result. Suppose $A\in W^{1,\infty}({\Sigma};\mathbb{C}^{n})\cap\mathcal{E}'(\bar{\Sigma};\mathbb{C}^{n})$ and $q\in L^{\infty}(\Sigma;\mathbb{C})\cap\mathcal{E}'(\bar{\Sigma};\mathbb{C}^{n})$, then for any $F=(F_{1},F_{2})\in L^{2}(\Sigma)\times L^{2}(\Sigma)$, the boundary value problem
\begin{equation}\label{Dirichlet3}
\left\{
\begin{array}{rll}\vspace{1ex}
\mathcal{S}u= & F &\quad\textrm{ in } \Sigma\times\Sigma \\ \vspace{1ex}
u|_{\partial\Sigma\times\partial\Sigma}= & 0. & \\
\end{array}
\right.
\end{equation}
admits a unique solution $u\in H^{2}(\Sigma)\times H^{2}(\Sigma)$.

Given any $f=(f_{1},f_{2})\in (H^{\frac{7}{2}}(\Gamma_{1})\cap\mathcal{E}'(\Gamma_{1}))\times (H^{\frac{3}{2}}(\Gamma_{1})\cap\mathcal{E}'(\Gamma_{1}))$, we can establish the existence and uniqueness of the solution to the boundary value problem \eqref{Dirichlet1} as follows. \eqref{Dirichlet1} is equivalent to the following boundary value problem for the system
\begin{equation}\label{Dirichlet4}
\left\{
\begin{array}{rll}\vspace{1ex}
\mathcal{S}u= & 0 &\quad\textrm{ in } \Sigma\times\Sigma \\ \vspace{1ex}
u|_{\Gamma_{1}\times\Gamma_{1}}= & f & \\ \vspace{1ex}
u|_{\Gamma_{2}\times\Gamma_{2}}= & 0. & \\
\end{array}
\right.
\end{equation}
Uniqueness of the solution to \eqref{Dirichlet4} follows from the unique solvability of \eqref{Dirichlet3} when $F=0$. To show that \eqref{Dirichlet4} has at least one solution, choose $G\in H^{4}(\Sigma)\cap\mathcal{E}'(\overline{\Sigma})\times H^{2}(\Sigma)\cap\mathcal{E}'(\overline{\Sigma})$ so that $G|_{\Gamma_{1}\times\Gamma_{1}}=f$ and $G|_{\Gamma_{2}\times\Gamma_{2}}=0$; choose $u_{0}$ to be the unique solution of \eqref{Dirichlet3} when $F=-\mathcal{S}G$, then $G+u_{0}$ is a solution for \eqref{Dirichlet4}. This completes the proof that \eqref{Dirichlet1} admits a unique solution in $H^{4}(\Sigma)$ for $f_{1}$ and $f_{2}$.

\section{Green's formula in a slab}

In the proof of Proposition \ref{runge} we used the Green's formula in a slab, in this part we establish this identity.
For $R>0$, define $\Sigma_{R}$ by
$$\Sigma_{R}:=\{x\in\Sigma:|x'|<R\}.$$
We may choose $R>0$ sufficiently large so that $supp(A^{(1)})\subset\overline{\Sigma_{R}}$. Introduce the notations
$$d_{j}(R):=\partial\Sigma_{R}\cap\Gamma_{j}, j=1,2;\quad\quad d_{3}(R)=\partial\Sigma_{R}\cap\Sigma,$$
then $A^{(1)}=0$ on $d_{3}(R)$. Let $u\in W(\Sigma)$ and let $U\in H^{4}(\Sigma)$ be the solution of the problem
$$
\begin{array}{rl} \vspace{1ex}
\mathcal{L}^{\ast}_{A^{(1)},q^{(1)}}U=g & \quad \textrm{ in } \Sigma\\ \vspace{1ex}
U=\Delta U=0 & \quad \textrm{ on } \Gamma_{1}\cup\Gamma_{2}.\\
\end{array}
$$
Apply Green's formula \eqref{green} over the region $\Sigma_{R}$ we obtain
\begin{equation}\label{greenslab}
\begin{array}{rl}\vspace{1ex}
& -\displaystyle\int_{\Sigma_{R}}ug\,dx\\ \vspace{1ex}
=& \displaystyle\int_{\Sigma_{R}}(\mathcal{L}_{A^{(1)},q^{(1)}}u)\overline{U}\,dx-\int_{\Sigma_{R}}u\overline{(\mathcal{L}^{\ast}_{A^{(1)},q^{(1)}}U)}\,dx\\ \vspace{1ex}
=& -\displaystyle\int_{d_{3}(R)}\partial_{\nu}(-\Delta u)\overline{U}\,dS+\int_{d_{1}(R)\cup d_{3}(R)}(-\Delta u)\overline{\partial_{\nu}U}\,dS\\ \vspace{1ex}
 & -\displaystyle\int_{d_{3}(R)}\partial_{\nu}u\overline{(-\Delta U)}\,dS+\int_{d_{1}(R)\cup d_{3}(R)}u\overline{\partial_{\nu}(-\Delta U)}\,dS.\\
\end{array}
\end{equation}
We will show that the right hand side converges to
\begin{equation}\label{righthandside}
\int_{\Gamma_{1}}\overline{\partial_{\nu}U}\Delta u\,dS+\int_{\Gamma_{1}}\overline{\partial_{\nu}\Delta U} u\,dS.
\end{equation}

To this end, notice that for $R>0$ sufficiently large,
$$\Delta^{2}u=\Delta^{2}U=0 \quad\quad \textrm{ in } \Sigma\backslash\Sigma_{R},$$
$$\Delta u=\Delta U=0 \quad\quad \textrm{ on } \partial(\Sigma\backslash\Sigma_{R}).$$
According to \cite{MW}, we have
\begin{equation}
\Delta u, \partial_{\nu}(\Delta u), \Delta U, \partial_{\nu}(\Delta U) \textrm{ are of order } \mathcal{O}(|x'|^{-n}) \textrm{ as } |x'|\rightarrow\infty.
\end{equation}
We can estimate the first term on the right hand side of \eqref{greenslab} as follows
$$
\begin{array}{rl}\vspace{1ex}
  & \left|-\displaystyle\int_{d_{3}(R)}\partial_{\nu}(-\Delta u)\overline{U}\,dS\right| = \left|\displaystyle\int_{|x'|=R,0<x_{n}<L}\partial_{\nu}(\Delta u)\overline{U}\,dS\right|\\ \vspace{1ex}
\leq & \left(\displaystyle\int_{|x'|=R,0<x_{n}<L}|\partial_{\nu}(\Delta u)|^{2}\,dS\right)^{\frac{1}{2}} \left(\displaystyle\int_{|x'|=R,0<x_{n}<L}|U|^{2}\,dS\right)^{\frac{1}{2}}\\ \vspace{1ex}
\leq & \mathcal{O}(R^{-\frac{n}{2}-1})\left(\displaystyle\int_{|x'|=R,0<x_{n}<L}|U|^{2}\,dS\right)^{\frac{1}{2}}\\ \vspace{1ex}
\leq & \mathcal{O}(R^{-\frac{n}{2}-1})\left(\displaystyle\int_{\partial\Sigma_{R}}|U|\,dS\right)^{\frac{1}{2}}\\ \vspace{1ex}
\leq & C \mathcal{O}(R^{-\frac{n}{2}-1})\|U\|_{H^{2}(\Sigma)} \rightarrow 0 \textrm{ as } R\rightarrow\infty.\\
\end{array}
$$
where in the last step the constant $C$ comes from the trace theorem. Similarly, all the other terms involving $d_{3}(R)$ on the right hand side of \eqref{greenslab} will vanish as $R\rightarrow\infty$. Therefore, after taking the limit $R\rightarrow\infty$ in \eqref{greenslab}, the right hand side will become \eqref{righthandside}, as we have claimed.\\

\end{appendices}

\begin{center}
ACKNOWLEDGEMENT
\end{center}
The author would like to thank Professor Gunther Uhlmann for his constant encouragement and support. The author would also like to thank Dr. Katya Krupchyk for her assistance and helpful discussions. This work is partially supported by the NSF grant DMS 126598.


\begin{thebibliography}{aa}


\bibitem{BRUZ} G. Bal, K. Ren, G. Uhlmann, T. Zhou,
\emph{Quantitative thermo-acoustics and related problems},
Inverse Problems \textbf{27} (2011), 055007.

\bibitem{CY} J. Chen, Y. Yang
\emph{Quantitative photo-acoustic tomography with partial data},
Inverse Problems \textbf{28} (2012), 115014.

\bibitem{CY2} J. Chen, Y. Yang
\emph{Inverse problem of electro-seismic conversion},
arXiv:1303.2135

\bibitem{FKSU} D. Dos Santos Ferreira, C. E. Kenig, J. Sj\"{o}strand, G. Uhlmann,
\emph{Determining a magnetic Schr\"{o}dinger operator from partial Cauchy data},
Comm. Math. Phys. \textbf{271} (2007), 467-488.

\bibitem{G} G. Grubb,
\emph{Distributions and operators},
volume 252 of \emph{Graduate Texts in Mathematics}, Springer, New York, 2009.

\bibitem{GGS} F. Gazzola, H.-C. Grunau, G. Sweers
\emph{Polyharmonic boundary value problems},
Springer-Verlag, Berlin, 2010.

\bibitem{H} L. H\"{o}rmander
\emph{The analysis of linear partial differential operators. I. Distribution theory and Fourier analysis},
Classics in Mathematics, Springer-Verlag, Berlin, 2003.

\bibitem{I} V. Isakov, \emph{On uniqueness in the inverse conductivity problem with local data},
Inverse Probl. Imaging. \textbf{1} (2007), no. 1, 95-105.

\bibitem{Ik} M. Ikehata,
\emph{Inverse conductivity problem in the infinite slab},
Inverse Problems \textbf{17} (2001), 437-454.

\bibitem{KS} K. Knudsen, M. Salo,
\emph{Determining nonsmooth first order terms from partial boundary measurements},
Inverse Problem Imaging 1 (2007), no.2, 349-369.

\bibitem{KLU} K. Krupchyk, M. Lassas, G. Uhlmann,
\emph{Inverse problems with partial data for a magnetic Schr\"{o}dinger operator in an infinite slab and on a bounded domain},
Comm. Math. Phys. \textbf{312} (2012), 87-126.

\bibitem{KLU2} K. Krupchyk, M. Lassas, G. Uhlmann,
\emph{Inverse boundary value problems for the perturbed polyharmonic operator},
Trans. Amer. Math. Soc., to appear.

\bibitem{KLU3} K. Krupchyk, M. Lassas, G. Uhlmann,
\emph{Determining a first order perturbation of the biharmonic operator by partial boundary measurements},
J. Funct. Anal. \textbf{262} (2012), 1781-1801.

\bibitem{KSU} C. E. Kenig, J. Sj\"{o}strand, G. Uhlmann,
\emph{The Calder\'{o}n problem with partial data},
Ann. of Math. \textbf{165} (2007), 567-591.


\bibitem{LU} X. Li, G. Uhlmann,
\emph{Inverse problems with partial data in a slab},
Inverse Probl. Imaging \textbf{4} (2010), no. 3, 449-462.

\bibitem{MW} K. Morgenr\"{o}ther, P. Werner,
\emph{Resonances and standing waves},
Math. Methods Appl. Sci. \textbf{9} (1987), no. 1, 105-126.


\bibitem{SU} J. Sylvester, G. Uhlmann,
\emph{A global uniqueness theorem for an inverse boundary value problem},
Ann. of Math. \textbf{125} (1987), 153-169.

\bibitem{SW} M. Salo, J.-N. Wang
\emph{Complex spherical waves and inverse problems in unbounded domains},
Inverse Problems \textbf{22} (2006), 2299-2309.

\end{thebibliography}
\end{document}